\title[CR maps into uniformly pseudoconvex hypersurfaces]{Regularity of CR maps into uniformly pseudoconvex hypersurfaces and applications
to proper holomorphic maps}
\author{Josef Greilhuber}
\address{Universität Wien, Oskar-Morgenstern-Platz 1, 1090 Wien, Austria}
\email{josef.greilhuber@univie.ac.at}
\author{Bernhard Lamel}
\address{Universität Wien, Oskar-Morgenstern-Platz 1, 1090 Wien, Austria}
\email{bernhard.lamel@univie.ac.at}
\newcommand{\C}{\mathbb{C}}
\newcommand{\R}{\mathbb{R}}
\newcommand{\cinfty}{C^{\infty}}
\newcommand{\pp}{{p'}}
\newcommand{\MOne}{M_I^{m,n}}
\newcommand{\DTwo}{D_{II}^m}
\newcommand{\MTwo}{M_{II}^m}
\newcommand{\DThree}{D_{III}^m}
\newcommand{\MThree}{M_{III}^m}
\newcommand{\DFour}{D_{IV}^m}
\newcommand{\MFour}{M_{IV}^m}
\newcommand{\CNp}{\C^{N'}}
\newcommand{\CN}{\C^{N}}
\newcommand{\diffable}[1]{C^{#1}}
\newtheorem{cor}{Corollary}
\newtheorem{lem}{Lemma}
\newtheorem{exa}{Example}
\newtheorem{thm}{Theorem}
\newtheorem{prop}{Proposition}
\theoremstyle{definition}
\newtheorem{defi}{Definition}
\newtheorem{rem}{Remark}
\numberwithin{equation}{section} 
\newif\ifpdfcomments\pdfcommentstrue
\subjclass[2010]{32H40,32H02,32M15}
\begin{document}
\begin{abstract}
    We study regularity properties 
    of CR maps in positive codimension valued in pseudoconvex manifolds which carry a nontrivial Levi foliation. We introduce an invariant which can be used to deduce that any sufficiently regular CR map
    from a minimal manifold into such a foliated target is either generically smooth or geometrically highly constrained, and to show generic smoothness of sufficiently regular CR transversal CR maps between pseudoconvex hypersurfaces. As an application, we discuss boundary regularity of proper holomorphic maps into bounded symmetric domains.
\end{abstract}
\maketitle

\section{Introduction}
This paper is devoted to the study of  regularity of CR maps into smooth
Levi-degenerate hypersurfaces foliated by complex manifolds and the application of these results to boundary regularity 
of proper holomorphic maps in positive codimension. The positive 
codimensional case is much more challenging than the equidimensional
case in many regards, but also has some specific features which make 
natural answers to regularity problems for mappings a bit different. We refer the 
reader to the discussion in \cite{lamelmir2}, where the authors point out 
some of the salient points, and summarize here those which are important for 
our approach. We consider a CR map $h\colon M \to M'$ with $M \subset \CN$ a CR submanifold, 
and $M'\subset \CNp$ a hypersurface (we shall simply refer to $M$ as the source, and 
$M'$ as the target). All structures and manifolds in this paper are
assumed to be smooth unless explicitly stated otherwise; our technique and our
results extend to other categories as well, as we will outline after stating and 
discussing
our results in the smooth setting first. 

For the purpose of this discussion, the following observations are important. First of all, the typical conclusion of a regularity statement in higher codimension is 
that of {\em generic smoothness} of the map given some a priori regularity, i.e. smoothness on 
a dense, open subset. We cannot drop 
the a priori regularity across a certain
threshhold, as for example Low \cite{Low:1985it} and Stensones \cite{Stensones:1996ek} showed. 
The typical a priori bound that we are going to use are linear in the codimension of the map
considered, which is typical for all known results except for the notable exception of 
mappings between spheres of ``small'' codimension, see e.g. Huang \cite{MR1703603}.

Automatic generic smoothness of all CR maps with such an a priori  regularity
follows if 
the target is of D'Angelo finite type (that is, if it does not contain any formal 
holomorphic curves), as shown in \cite{lamelmir1}. However, the condition that $M'$ is of 
finite type is definitely {\em not necessary} if one excludes certain typical examples of 
non-smooth CR maps. For example, since there always exist a nowhere smooth CR function $\varphi$ (actually, with 
arbitrary finite smoothness prescribed) near a strictly pseudoconvex 
points, if the target contains a complex curve $\gamma (\zeta)$, one obtains a nowhere 
smooth CR map $\gamma\circ \varphi$.

However, as was already observed in \cite{lamelmir1} in the case where the 
target is the tube over the light cone, in many geometrically interesting
situations, this type of behavior is the only exceptional example. Our first main 
theorem describes one such situation. In order to formulate it, we need to
introduce an invariant measuring the (non)degeneracy of a foliation by 
complex manifolds. 

Consider a foliation $\eta$ of $M' \subseteq \C^{N'}$ by complex manifolds $\eta_p$, where
$\eta_p$ denotes the leaf of $\eta$ containing $p$. To each $p\in M'$, we associate
\begin{align*}
    \nu_p :=\max_{0\neq V_p \in T_p\eta} \dim_\C \ker \left( \Bar L_p \rightarrow \mathbb{P}_{T\C^{N'}/T \eta}(\Bar L_p V) \right) - \dim_\C \eta.
\end{align*}
Here $\bar L_p V$ denotes the componentwise derivative
at $p$ of an (arbitrary) smooth extension $V$ of $V_p$, and we project onto the quotient space 
$\faktor{T \C^N}{T\eta} $, where $T\eta \subset T\C^{N'}|_M$ denotes the tangent bundle of $\eta$, i.e. $T_p\eta = T_p\eta_p$.  It turns out that 
this yields a well defined invariant 
$\nu_p$ because as we shall show in \cref{sec:surfaces} the map associating to $\Bar L \in T^{0,1}M'$ and $V \in \Gamma(T\eta)$ the section $\mathbb{P}_{T\C^{N'}/T \eta}(\Bar L V)$ of the quotient bundle $T\C^{N'}/T\eta$ is  tensorial.

\begin{restatable}{thm}{coralways}
\label{cor:always}
Let $M'\subset \C^{N'}$ be a uniformly pseudoconvex hypersurface 
with Levi foliation $\eta$, satisfying $\nu_{p'} = 0 $ for all $p' \in M'$, and let $M$ be a connected \emph{minimal} CR submanifold. Then any $C^{N'-1}$-regular CR map $h: M \rightarrow M'$ is either generically smooth, or it maps $M$ entirely into a single leaf of the foliation, i.e. $h(M) \subseteq \eta_{h(p)}$ for any $p \in M$.
\end{restatable}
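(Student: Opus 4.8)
The plan is to split $M$ according to the behaviour of the \emph{transverse part} of $h$. Near a point $q_0=h(p_0)$, straighten the Levi foliation: locally, as a foliated manifold, $M'$ is a product $\Lambda\times\Sigma$, with $\Lambda$ a piece of a $k$-dimensional complex manifold (with $k=\dim_\C\eta$), $\Sigma$ the local leaf space, and the leaves the slices $\Lambda\times\{\mathrm{pt}\}$; writing $h=(f,g)$ accordingly, $g\colon M\to\Sigma$ is a CR map for the quotient CR structure, and ``$h$ maps a neighbourhood of $p$ into a single leaf'' means precisely ``$g$ is locally constant near $p$''. Since $M'$ is uniformly pseudoconvex with Levi foliation $\eta$, the Levi form of $M'$ is positive definite transverse to $\eta$, so $\Sigma$ carries a \emph{strictly pseudoconvex} CR structure of hypersurface type. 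The proof then has three steps: (i) $g$ is smooth on a dense open subset $S_g\subseteq M$; (ii) near every point of $S_g$ the map $h$ is \emph{either} constrained into a single leaf \emph{or} itself smooth --- this is where the hypothesis $\nu_{p'}\equiv 0$ enters; (iii) if the first alternative occurs on a nonempty open set, it propagates to all of $M$.

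For (i) one differentiates $\rho'\circ h\equiv 0$ (with $\rho'$ a local defining function of $M'$) along the tangential CR vector fields. Because the Levi form of $M'$ is nondegenerate transverse to $\eta$, projecting the resulting identities onto the leaf quotient throughout produces a genuine reflection identity for the transverse component: $\bar g$ equals a real-analytic function of $z$, $\bar z$ and the holomorphic jet of $h$ of some fixed order $\le N'-1$, valid wherever a nondegeneracy determinant formed from that jet is nonzero. Since $\Sigma$ is of D'Angelo finite type and $M$ is minimal, the regularity machinery of \cite{lamelmir1} --- applied, after a local realization, to the strictly pseudoconvex target $\Sigma$, with the a priori regularity $C^{N'-1}$ chosen above the codimension-dependent threshold --- yields smoothness of $g$ on a dense open subset $S_g$ of $M$. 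This step does \emph{not} use $\nu_{p'}\equiv 0$.

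For (ii), fix $p\in S_g$. If $g$ is locally constant near $p$, then $h$ maps a neighbourhood of $p$ into a single leaf, the first alternative. Suppose instead $g$ is not locally constant near $p$; then $h$ does not map a neighbourhood of $p$ into a leaf, and, since a non-constant CR map into a strictly pseudoconvex target is CR transversal, $h$ is CR transversal near $p$. Here the condition $\nu_{q_0}=0$ is decisive: the first-order data of $\rho'$ degenerate exactly along $\eta$ (the leaves being complex submanifolds of $M'$), so a single differentiation of $\rho'\circ h\equiv 0$ carries no information about the leaf component $f$; but $\nu_{q_0}=0$ says that the \emph{second-order} transverse-to-leaf invariant of $M'$ --- the tensorial assignment $(\bar L,V)\mapsto\mathbb{P}_{T\C^{N'}/T\eta}(\bar L V)$ of \cref{sec:surfaces} --- is nondegenerate. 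Consequently, \emph{once $g$, and hence $\bar g$ and its $L$-derivatives, is known and smooth}, applying the tangential CR operators twice to $\rho'\circ h\equiv 0$ yields relations that can be solved for $\bar f$ in terms of $f$, its holomorphic derivatives, and the (smooth) jet of $g$. This is a reflection identity for $f$; combined with minimality of $M$ it forces $f$, hence $h$, to be smooth near $p$.

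For (iii), let $A$ be the interior of the set of points near which $h$ maps into a single leaf. By (ii), every point of $S_g$ lies in $A$ or in the (open, dense) smooth locus of $h$; hence if $A=\emptyset$ then $h$ is smooth on the dense set $S_g$ and the theorem holds with the first alternative. If $A\neq\emptyset$, then $g$ is locally constant on $A$; being a CR map of the minimal manifold $M$, $g$ extends holomorphically to a wedge with edge $M$, and boundary uniqueness forces $g$ --- locally constant on the nonempty open set $A$ --- to be constant on the connected manifold $M$, so the connected set $h(M)$, being locally contained in a leaf, lies in a single leaf $\eta_{h(p)}$. I expect step (ii), specifically the second reflection identity for the leaf component, to be the main obstacle: one must show that the sole obstruction to recovering $\bar f$ is measured by the invariant $\nu$, so that $\nu_{q_0}=0$ makes it vanish --- this is exactly where $\nu$ and the tensoriality established in \cref{sec:surfaces} do the essential work, and where the linear-in-codimension bookkeeping producing the threshold $N'-1$ must be carried out. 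Secondary technical points are the local realization of the abstract strictly pseudoconvex leaf space needed to invoke \cite{lamelmir1} in step (i), and the wedge-extension and boundary-uniqueness arguments underpinning step (iii).
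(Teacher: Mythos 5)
There is a genuine gap at the very foundation of your argument, in step (i). You assume that the local leaf space $\Sigma$ of the Levi foliation carries a strictly pseudoconvex CR structure of hypersurface type for which the transverse component $g$ of $h$ is a CR map. This requires the CR structure of $M'$ to be \emph{projectable} along the leaves, i.e.\ $[\Gamma(\C T\eta),\Gamma(T^{0,1}M')]\subseteq\Gamma(T^{0,1}M'+\C T\eta)$. But for $V\in\Gamma(T^{1,0}\eta)$ and $\bar L\in\Gamma(T^{0,1}M')$ the $(1,0)$-part of $[\bar L,V]$ modulo $T\eta$ is (up to sign and the choice of complement) exactly the tensor $R(\bar L,V)=\mathbb{P}_{T^\bot\eta}(\bar L V)$ of \Cref{lem:tensor}. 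The hypothesis $\nu_{p'}\equiv 0$ says precisely that $R(\cdot,V)$ has \emph{minimal} kernel $T^{0,1}\eta$ for every $V\neq 0$, so $R(\bar L,V)\neq 0$ whenever $\bar L\notin T^{0,1}\eta$: the CR structure is as far from projectable as possible, and no quotient CR structure on $\Sigma$ exists. (The tube over the light cone, the model case with $\nu=0$, already exhibits this.) Consequently $g$ is not a CR map into anything to which the finite-type regularity machinery of \cite{lamelmir1} could be applied, and step (i) --- on which steps (ii) and (iii) rest --- collapses. The condition $\nu=0$ is an obstruction to your decomposition, not an aid to it. Secondary problems: the asserted first- and second-order ``reflection identities'' in (i) and (ii) are never actually derived, and the claim that a nonconstant CR map into a strictly pseudoconvex target is automatically CR transversal needs hypotheses (and the paper's notion of CR transversality is only defined for hypersurface sources, whereas $M$ here is a general minimal CR submanifold).

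For contrast, the paper's proof is contrapositive and does not attempt any reflection identity: if $h$ is not generically smooth, \Cref{lem:r0always} gives $r_0\geq 1$, so \Cref{thm:main} with $k=0$, $l=1$ (this is where the threshold $C^{N'-1}$ comes from) produces a $C^1$ CR family of formal curves tangent to infinite order to $M'$ along $h$; \Cref{lem:fiberdirection} and \Cref{prop:pointwise} then show $\gamma_t(q)\in T_{h(q)}\eta$ and $h_\ast T^{0,1}_qM\subseteq\ker R_{h(q)}(\cdot,\gamma_t(q))$, which under $\nu\equiv 0$ equals $T^{0,1}_{h(q)}\eta$. A constant-rank argument plus minimality of $M$ then forces an open set into a single leaf, and an open--closed argument (using that a CR function on a connected minimal manifold vanishing on an open set vanishes identically) propagates this to all of $M$. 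Your step (iii) is in the right spirit, but it cannot be run on the ill-defined map $g$; the paper instead applies the uniqueness principle to $\pi\circ h$ for a local holomorphic defining map $\pi$ of the leaf.
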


If $\nu$ is nonzero, our second main theorem still 
guarantees automatic regularity if the number of positive Levi eigenvalues of the source manifold $M$ is large enough and $h$ is assumed to be CR transversal (which 
is automatically satisfied in many applications).

\begin{restatable}{thm}{cortransversal}
\label{cor:transversal}
Let $M' \subset \C^{N'}$ be a uniformly pseudoconvex hypersurface, and $M \subset \C^N$ a \emph{pseudoconvex} hypersurface with at least $n_+$ positive
Levi eigenvalues. Then any \emph{CR-transversal} CR map $h: M \rightarrow M'$ of regularity 
$C^{N'-n_+}\cap C^2$ is generically smooth near any point $p$ which satisfies $\nu_{h(p)} < n_+$.
\end{restatable}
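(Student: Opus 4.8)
The plan is to run the reflection-identity method, letting the geometry of the Levi foliation $\eta$ enter through the invariant $\nu$, and to exploit CR transversality together with the pluripositivity of the source. Fix $p\in M$, $p'=h(p)\in M'$, and put $k=\dim_\C\eta$. Since $M'$ is uniformly pseudoconvex, its Levi form has constant rank $N'-1-k$ with null space tangent to $\eta$; choose holomorphic coordinates $Z'=(z',w',\zeta')\in\C^{k}\times\C^{N'-1-k}\times\C$ near $p'$ in which $\eta_{p'}=\{w'=0,\ \zeta'=0\}$ and $M'$ has a defining function $\rho'$ whose complex Hessian in $(z',w')$ is positive semidefinite of rank $N'-1-k$ with kernel the $z'$-directions. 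Writing $h=(f,g,\varphi)$ in these coordinates, the constraint $h(M)\subseteq M'$ reads $\rho'(h,\bar h)\equiv 0$ on $M$; applying the antiholomorphic CR vector fields $\bar L_1,\dots,\bar L_{N-1}$ of $M$ produces the usual chain of identities, which one wants to solve for $\bar h$, i.e.\ to show that $h$ is finitely nondegenerate at generic points. Because the $z'$-directions are Levi-null, the bare reflection only recovers $(\bar g,\bar\varphi)$ up to finite order, not $\bar f$: the $(g,\varphi)$-part of $h$ behaves like a CR map into a strictly pseudoconvex quotient, while the leaf-component $f$ carries the obstruction.

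The heart of the argument is to quantify this obstruction. After applying $\bar L$-derivatives up to order $\ell$, the linear system for $\bar h(q)$, $q$ near $p$, has coefficient matrix assembled from the $\ell$-jet of $h$ and from the ``second fundamental form'' of $\eta$ along $h$, i.e.\ from the tensor $\bar L_{h(q)}V\mapsto\mathbb P_{T\C^{N'}/T\eta}(\bar L_{h(q)}V)$ whose tensoriality is proved in \cref{sec:surfaces}; one shows its corank is governed by $\nu_{h(q)}$. Here CR transversality and the $n_+$ positive Levi eigenvalues of $M$ intervene: a CR transversal CR map between pseudoconvex hypersurfaces multiplies the Levi form by a positive conformal factor, so at a generic $q$ near $p$ the differential $dh_q$ carries the $n_+$-dimensional positive eigenspace of the Levi form of $M$ into the Levi-positive, hence leaf-transverse, part of $T^{1,0}_{h(q)}M'$, injectively and with Levi-independent image. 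These $n_+$ independent positive directions contribute $n_+$ extra independent reflection equations already at first order; since $\nu_{h(p)}<n_+$ (and $\nu$ is upper semicontinuous, so $\nu_{h(q)}<n_+$ for $q$ near $p$), they more than absorb the corank and keep the filtration of the reflection system from stalling, and a dimension count --- $N'-1-k$ leaf-transverse Segre directions plus the transversal direction, of which $n_+$ are already recovered from the positivity of $M$ --- yields that $h$ is $\ell$-nondegenerate at such $q$ with $\ell\le N'-n_+$.

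With $h$ known to be $\ell$-nondegenerate, $\ell\le N'-n_+$, on a nonempty open set $U$ near $p$, the regularity follows from the reflection principle for finitely nondegenerate CR maps (the mechanism underlying \cref{cor:always}, cf.\ \cite{lamelmir1}): an $\ell$-nondegenerate CR map of class $C^{\ell}$ satisfies $\bar h=\Psi(h,j^{\ell}h)$ with $\Psi$ smooth, hence is smooth on $U$; here $h\in C^{N'-n_+}\cap C^{2}\subseteq C^{\ell}$, the $\cap C^{2}$ being needed only in the edge case $N'-n_+=1$ (strictly pseudoconvex target, $\ell=1$), which is covered by the classical Lewy--Pinchuk regularity theorem. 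To replace ``$U$ open'' by ``$U$ dense near $p$'' one argues as for \cref{cor:always}: $M$ is minimal near $p$ (a pseudoconvex hypersurface with a positive Levi eigenvalue contains no complex hypersurface), so the $\ell$-degeneracy locus of $h$ is nowhere dense, the only alternative being that $h$ sends an open subset of $M$ into a single leaf $\eta_{q'}$; but a leaf is a complex submanifold of $M'$, so there $dh$ would take values in $T^{c}M'$, contradicting CR transversality. I expect the main obstacle to be the second paragraph: pinning the corank of the iterated reflection system for $\bar h$ to the invariant $\nu_{h(q)}$ and verifying that the $n_+$ positive eigendirections of $M$ are genuinely complementary to that corank, so that $\nu_{h(p)}<n_+$ forces finite nondegeneracy of $h$ with the sharp jet order $N'-n_+$; everything else is an assembly of standard CR-mapping technique (reflection principle, minimality propagation, Lewy--Pinchuk).
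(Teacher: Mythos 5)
There is a genuine gap at the step you yourself flag as the main obstacle, and it is not merely a technical one: the claim that $\nu_{h(p)}<n_+$ together with CR transversality and the $n_+$ positive Levi eigenvalues of $M$ forces $h$ to be $\ell$-nondegenerate with $\ell\le N'-n_+$ is precisely what the paper says one should \emph{not} expect. The authors state explicitly (in the introduction and again in the discussion of $2$-nondegeneracy) that their hypotheses do not imply that the maps under consideration are even $2$-nondegenerate, and that the hope that $\nu_{p'}<n_+$ yields $2$-nondegeneracy of $h$ is ``very likely true only in Xiao's special case.'' The invariant $\nu$ measures the corank of a specific first-order tensor $R$ evaluated on \emph{leaf} vectors of the target's Levi foliation; it gives no mechanism for the iterated reflection identities to recover the $\bar f$-component (the $k=\dim_\C\eta$ leaf directions) and reach full rank $N'$. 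What the hypotheses actually yield is only the partial bound $r_1\ge 1+n_+$ (obtained in the paper via \Cref{cor:r1} applied to a strongly pseudoconvex slice $S\subset M$ of CR dimension $n_+$ on which $h$ is an immersion with $h_\ast T^{0,1}S$ transverse to the Levi kernel, by \Cref{lem:crtransversal}), and in general $1+n_+<N'$, so the reflection principle for finitely nondegenerate maps never becomes applicable. This is also why the theorem's conclusion is only \emph{generic} smoothness rather than the everywhere-smoothness your reflection argument would deliver.

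The paper's actual route is contrapositive and avoids finite nondegeneracy of $h$ entirely: assuming $h$ is nowhere smooth on some open set, the formal foliation theorem of Lamel--Mir (\Cref{thm:main}, applied with $k=1$, $l=1+n_+$, which is exactly where the regularity threshold $C^{N'-n_+}$ comes from) produces a $C^1$ CR family of formal complex curves tangential to infinite order to $M'$ along $h$. \Cref{lem:fiberdirection} forces the tangent $\gamma_t$ of such a curve into $T\eta$, and \Cref{prop:pointwise} forces $h_\ast T_q^{0,1}M\subseteq\ker R_{q'}(\cdot,\gamma_t(q))$; the dimension count $K+\nu_{q'}\ge\dim_\C\ker R_{q'}(\cdot,\gamma_t(q))\ge\dim_\C\bigl(T_{q'}^{0,1}\eta+h_\ast T_q^{0,1}S\bigr)=K+n_+$ then contradicts $\nu_{q'}<n_+$ (which holds on a neighborhood by upper semicontinuity of $\nu$). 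You would need to either carry out this argument or supply a genuinely new proof that the reflection system closes up at order $N'-n_+$, which the authors believe is false outside Xiao's setting.
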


We note that under a different set of assumptions (which are, as we will see later, not directly related to 
our invariant $\nu$) Xiao \cite[Thm. 1]{regularitymx} obtains everywhere regularity: to be precise,
in his case, he considers maps from strongly pseudoconvex hypersurfaces into 
uniformly pseudoconvex hypersurfaces of the same signature which are $2$-nondegenerate; the point 
is that such maps are automatically
$2$-nondegenerate in the sense of \cite{Lamel:2001vs}, so that one can apply theorems 
from \cite{Lamel:2001kq,Lamel:2004hh}. We discuss the connection with our result later in \cref{sec:2ndeg}, and point out here that $\nu = 0$ implies, in particular, $2$-nondegeneracy 
of the target; our assumptions, however, do not imply that 
that the maps we are considering are $2$-nondegenerate. 
Let us furthermore point out the low codimension results 
in the paper of Kossovskiy, Xiao, and the second author 
\cite{Kossovskiy:2016wx}.

As an application of Theorem~\ref{cor:transversal} we obtain the 
following boundary regularity result for proper holomorphic maps:

\begin{cor}
\label{cor:holomorphic}
Let $\Omega \subseteq \CN$ and $\Omega' \subseteq \CNp$ be domains, and $M \subset \partial \Omega$, $M'\subset \partial \Omega'$ be two hypersurfaces contained in the respective domains' smooth boundary part. Assume that $\Omega'$ is uniformly pseudoconvex at $M'$, and $M$ is pseudoconvex with at least $n_+$ positive Levi eigenvalues. Then every holomorphic map $H: \Omega \rightarrow \Omega'$ which extends as a $C^{N'-n_+}\cap C^2$-regular map to $M$ and maps $M$ into $M'$, is generically smooth (on $\bar \Omega$) near any $p \in M$ satisfying $\nu_{H(p)} < n_+$.
\end{cor}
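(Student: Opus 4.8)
The plan is to reduce the statement about the proper holomorphic map $H$ to an application of Theorem~\ref{cor:transversal} applied to the boundary CR map $h := H|_M \colon M \to M'$. First I would observe that since $H$ extends $C^{N'-n_+}\cap C^2$-regularly to $M$ and maps $M$ into $M'$, the restriction $h = H|_M$ is automatically a CR map of the same regularity: this is because $H$ is holomorphic on $\Omega$, hence annihilated by the $\bar\partial$-operator, and the boundary values of such a map satisfy the tangential CR equations on $M$. Thus the only nontrivial hypothesis of Theorem~\ref{cor:transversal} that remains to be checked is \emph{CR transversality} of $h$ at the relevant points. Once CR transversality is established, Theorem~\ref{cor:transversal} directly gives generic smoothness of $h$ near any $p\in M$ with $\nu_{h(p)} = \nu_{H(p)} < n_+$, and it remains only to promote smoothness of the boundary map $h$ to smoothness of $H$ up to $\bar\Omega$.

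The CR transversality of $h$ is where the hypotheses ``$\Omega'$ uniformly pseudoconvex at $M'$'' and ``$\Omega$ pseudoconvex with at least $n_+ \geq 1$ positive Levi eigenvalues'', together with $H$ being \emph{proper} (or at least a holomorphic map sending the domain $\Omega$ into $\Omega'$ and the boundary piece $M$ into $M'$), come into play. The standard argument: if $h$ were \emph{not} CR transversal at $p$, then by the characterization of CR transversality (equivalently, non-vanishing of the normal derivative / the Hopf-lemma type behaviour), the map $H$ would push the normal direction of $\Omega$ at $p$ into the complex tangent space of $M'$ at $H(p)$; combined with $H(\Omega)\subseteq\Omega'$ and $H(M)\subseteq M'$ one derives that $H$ locally maps into a complex submanifold of $\partial\Omega'$, contradicting either properness or, more robustly, the geometry forced by $M$ having a positive Levi eigenvalue (a positive Levi eigenvalue on the source forces the image of a nonconstant holomorphic disc attached along the complex tangent to bend into $\Omega'$, so the normal component of $H$ cannot vanish to infinite order; a Hopf lemma argument using the uniform pseudoconvexity of $\Omega'$ then yields $h$ CR transversal). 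I would carry this out via the standard "approach from the inside": pick a defining function $\rho'$ for $\Omega'$ that is plurisubharmonic near $M'$; then $\rho'\circ H$ is plurisubharmonic on $\Omega$, vanishes on $M$, is negative on $\Omega$, and the Hopf lemma forces its normal derivative to be strictly negative at $p$, which is exactly CR transversality of $h$ at $p$.

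With CR transversality in hand, Theorem~\ref{cor:transversal} yields an open dense subset $U \subseteq M$ (near $p$) on which $h$ is $C^\infty$. The final step is to transfer this to $H$: near a point $q\in U$ where $h = H|_M$ is smooth and (by CR transversality, shrinking $U$) $h$ is a CR diffeomorphism onto its image or at least $M'$ is smooth and $H$ is transversal, one invokes a standard reflection/edge-of-the-wedge or Cauchy-integral argument — since $M$ is of hypersurface type and pseudoconvex with a positive Levi eigenvalue it is minimal, hence generic one-sided wedge extension applies — to conclude that $H$ itself extends smoothly to $\bar\Omega$ near $q$. Because this works near every point of the dense open set $U$, $H$ is generically smooth on $\bar\Omega$ near $p$, as claimed.

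The main obstacle I expect is the CR transversality step: unlike the equidimensional case, in positive codimension CR transversality of boundary maps is \emph{not} automatic, and one genuinely needs to exploit the interplay of pseudoconvexity of the source (a positive Levi eigenvalue), uniform pseudoconvexity of the target, and the fact that $H$ maps $\Omega$ into $\Omega'$; getting a clean Hopf-lemma argument that applies at \emph{every} boundary point $p$ with $\nu_{H(p)} < n_+$ (rather than generically) is the delicate point, and I would want to phrase it so that it does not secretly use finiteness of type of $M'$.
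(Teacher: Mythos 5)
Your overall strategy is exactly the paper's: establish CR transversality of the boundary map $h=H|_M$ via a Hopf-lemma argument, feed it into \Cref{cor:transversal}, and then propagate smoothness of the boundary values to smoothness of $H$ up to $\bar\Omega$ on the dense open set (the paper cites a standard extension theorem, Theorem 7.5.1 in \cite{baouendi}, for that last step, which is the same content as your reflection/one-sided extension remark).

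The genuine gap is in your concrete version of the transversality step: you ``pick a defining function $\rho'$ for $\Omega'$ that is plurisubharmonic near $M'$.'' Such a function need not exist: a smooth pseudoconvex boundary does not in general admit a local plurisubharmonic defining function (this is precisely the phenomenon behind the Diederich--Fornaess exponent), so the argument as written fails for general pseudoconvex targets. The paper's fix (\Cref{prop:boundarytransversality}, via \Cref{lem:localdfexponent}) is to use the Diederich--Fornaess observation that for any $\eta\in(0,1)$ one can choose a local defining function $\rho$ with $-(-\rho)^{\eta}$ strictly plurisubharmonic; the price is that one must then transfer the vanishing of the normal derivative of $\rho\circ H$ at $p$ to that of $-(-\rho)^{\eta}\circ H$, and this is exactly where the H\"older regularity $C^{1,\epsilon}$ (here guaranteed by the $C^2$ hypothesis) of the boundary extension is needed: $\rho\circ H(z)=\mathcal O(|z-p|^{1+\epsilon})$ implies $-(-\rho)^{\eta}\circ H(z)=\mathcal O(|z-p|^{1+\delta})$ for $\eta=\tfrac{1+\delta}{1+\epsilon}$, and the Hopf lemma applied to the subharmonic function $-(-\rho)^{\eta}\circ H$ then gives the contradiction. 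A secondary, more minor point: your heuristic that a positive Levi eigenvalue of the source is needed to force transversality is a red herring --- in the paper's \Cref{prop:boundarytransversality} transversality follows from pseudoconvexity of $\Omega'$ at $M'$ and the regularity of the extension alone, at \emph{every} point of $M$; the positive Levi eigenvalues of $M$ enter only through \Cref{cor:transversal}.
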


 In \Cref{sec:domains} we discuss in detail  proper holomorphic maps into pseudoconvex domains as an 
application of \Cref{cor:transversal}, and prove 
a theorem which deals with maps into boundaries of classical symmetric domains. By a careful study of the geometry of the smooth boundary components, we can calculate 
the invariant $\nu$ in each case, and obtain the following theorem, which significantly extends the results obtained by Xiao in \cite{regularitymx}: the source manifolds we can consider are not required to be strictly pseudoconvex any longer. The price we have to pay is that  we have 
to assume higher a priori regularity and only 
obtain generic smoothness. 

\begin{thm}
\label{thm:zusammenfassung}
Let $M \subset \C^N$ be a $\cinfty$-smooth pseudoconvex hypersurface, and assume that its Levi form has exactly $n_+$ positive eigenvalues everywhere. Denote by $M'$ the smooth part of the boundary of a classical symmetric domain $\Omega \subseteq \C^{N'}$. Then every CR-transversal CR map $h:M \rightarrow M'$ of regularity $C^{N'-n_+}$ is smooth on a dense open subset of $M$, given that
\begin{enumerate}[\rm (1)]
    \item $\Omega = D_I^{m,n}$ for $m,n\geq 2$ and $n_+ \subseteq \{m+n-3,m+n-2\}$,
    \item $\Omega = D_{II}^{m}$ for $m \geq 4$ and $n_+ \subseteq \{2m-7,\dots,2m-4\}$,
    \item $\Omega = D_{III}^{m}$ for $m \geq 2$ and $n_+ = m-1$ or
    \item $\Omega = D_{IV}^{m}$ for $m \geq 2$, $M$ is minimal and $n_+ \leq m-2$.
\end{enumerate}
\end{thm}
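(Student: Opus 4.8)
The plan is to deduce all four cases from \Cref{cor:transversal}; the substantive content is geometric, namely identifying the Levi foliation of the smooth boundary and computing the invariant $\nu$ along it. Fix one of the four families, write $\Omega\subset\C^{N'}$ for the domain in its Harish--Chandra realization, $r$ for its rank, and $M'$ for the smooth part of $\partial\Omega$. First I would recall the classical description of the boundary stratification: $M'$ is exactly the $\operatorname{Aut}(\Omega)$-orbit of boundary points of rank $r-1$; it is open and dense in $\partial\Omega$, its Levi form is everywhere positive semidefinite of constant rank (so, by homogeneity, the positive eigenvalues are bounded below and $M'$ is uniformly pseudoconvex), and the kernel of the Levi form integrates to the Levi foliation $\eta$, the leaf through a point being the rank-$(r-1)$ boundary face through it, itself a bounded symmetric domain of rank $r-1$. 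Reading off dimensions from the combinatorics of faces gives
\begin{align*}
  \dim_\C\eta=(m-1)(n-1),\quad n_+(M')=m+n-2 &\qquad(D_I^{m,n}),\\
  \dim_\C\eta=\binom{m-2}{2},\quad n_+(M')=2m-4 &\qquad(D_{II}^m),\\
  \dim_\C\eta=\binom{m}{2},\quad n_+(M')=m-1 &\qquad(D_{III}^m),\\
  \dim_\C\eta=1,\quad n_+(M')=m-2 &\qquad(D_{IV}^m),
\end{align*}
where $n_+(M'):=\dim_\C T^{0,1}M'-\dim_\C\eta$ is the number of positive Levi eigenvalues of $M'$.

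The crucial step is computing $\nu$ on $M'$. Since $\operatorname{Aut}(\Omega)$ acts transitively on $M'$, $\nu_{p'}$ is independent of $p'$, so I would evaluate it at one model point $p_0'$. Choosing Harish--Chandra coordinates adapted to the leaf $\eta_{p_0'}$ makes $T_{p_0'}\eta$ a coordinate subspace and gives an explicit basis of $T^{0,1}_{p_0'}M'$; using the tensoriality established in \Cref{sec:surfaces} one then writes $(V_{p_0'},\bar L)\mapsto\mathbb P_{T\C^{N'}/T\eta}(\bar L V)$ as a concrete bilinear map, for $V$ any local section of $T\eta$ with $V(p_0')=V_{p_0'}$. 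One always has $T^{0,1}\eta_{p_0'}\subseteq\ker(\bar L\mapsto\mathbb P(\bar L V))$, so $\nu_{p_0'}\ge 0$; the content is how much larger the kernel is. Since the isotropy group of $\eta_{p_0'}$ acts on $T_{p_0'}\eta$ with few orbits, it suffices to test a normal form of $V_{p_0'}$ (for the matrix realizations a matrix of a fixed small rank; for $D_{IV}^m$ there is only one leaf direction up to scaling) and to determine the rank of $\bar L\mapsto\mathbb P(\bar L V)$ modulo $T^{0,1}\eta_{p_0'}$. Carrying this out will give
\begin{align*}
  \nu(D_I^{m,n})=m+n-4,\qquad\nu(D_{II}^m)=2m-8,\qquad\nu(D_{III}^m)=m-2,\qquad\nu(D_{IV}^m)=0.
\end{align*}
As a consistency check, the exceptional isomorphisms $D_I^{2,2}\cong D_{IV}^4$, $D_{II}^4\cong D_{IV}^6$ and $D_{III}^2\cong D_{IV}^3$ all make the corresponding values agree (each equal to $0$).

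Granting these computations, the four cases follow by bookkeeping. Because $h$ is CR-transversal one has $h^*\rho'=a\,\rho$ near $M$ with $a>0$ (for defining functions $\rho,\rho'$ of $M,M'$), so the Levi form of $M'$ at $h(p)$ pulls back to a positive multiple of that of $M$ at $p$; in particular $n_+\le n_+(M')$, and the values of $n_+$ admitted in (1)--(4) are precisely those lying strictly above $\nu(\Omega)$ and weakly below $n_+(M')$, namely $\{m+n-3,m+n-2\}$, $\{2m-7,\dots,2m-4\}$, $\{m-1\}$ and $\{1,\dots,m-2\}$. Hence $\nu_{h(p)}=\nu(\Omega)<n_+$ for every $p\in M$; and since $N'-n_+\ge 2$ throughout the stated parameter ranges, a $C^{N'-n_+}$ map is automatically of class $C^{N'-n_+}\cap C^2$. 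Thus \Cref{cor:transversal} applies near every point of $M$, and $h$ is smooth on a dense open subset of $M$. In case (4) the minimality hypothesis is used only to exclude $n_+=0$ (a pseudoconvex hypersurface with $n_+=0$ is Levi-flat, hence not minimal), so that indeed $\nu(D_{IV}^m)=0<n_+$; alternatively one can invoke \Cref{cor:always} here, noting that a CR-transversal map cannot send $M$ into the complex leaf $\eta_{h(p)}$, which excludes its second alternative.

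The main obstacle is the $\nu$ computation of the second paragraph: it requires convenient (matrix, resp.\ tube) models of the individual boundary faces and a precise description of how the leaves of $\eta$ bend away from a given leaf, since $\nu$ measures exactly the degeneracy of the $T\C^{N'}/T\eta$-valued expression $\bar L V$. The remaining ingredients --- transitivity of $\operatorname{Aut}(\Omega)$ on $M'$, uniform pseudoconvexity, and the reduction to \Cref{cor:transversal} --- are routine once the relevant boundary geometry has been set up.
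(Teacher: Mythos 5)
Your strategy is exactly the paper's: use the boundary orbit theorem to reduce to a single model point on $M'$, compute the Levi foliation and the invariant $\nu$ there, and feed the result into \Cref{cor:transversal} (with the minimality hypothesis in case (4) serving to rule out $n_+=0$, or alternatively to invoke \Cref{cor:always}). All of your asserted numerical data --- the leaf dimensions, the counts of positive Levi eigenvalues, and the values $\nu=m+n-4$, $2m-8$, $m-2$, $0$ --- agree with what the paper establishes in \Cref{d1structure}, \Cref{d2structure}, \Cref{d3structure} and \Cref{d4structure}, and your bookkeeping at the end (the ranges of $n_+$, the observation that $N'-n_+\geq 2$ so that $C^{N'-n_+}\subseteq C^{N'-n_+}\cap C^2$) is correct.

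The gap is the one you flag yourself: the entire substantive content of the theorem is the computation of $\nu$, and your proof asserts the four values ("carrying this out will give\dots") without carrying any of them out. As written, nothing certifies that, say, $\ker R_{p'}(\cdot,V)$ has codimension at least $2$ in $T^{0,1}_{p'}M'/T^{0,1}_{p'}\eta$ for \emph{every} nonzero leaf direction $V$ in the $D_I$ case --- and since $\nu$ is a maximum over $V$, reducing to a single normal form requires also checking that higher-rank $V$ cannot enlarge the kernel. The paper executes this via a concrete device worth knowing: \Cref{lem:setup} reduces the computation of $\nu_{p'}$ to the tensor $R^S$ on a transverse strongly pseudoconvex slice $S=\Sigma\cap M'$ (rank-one, resp.\ rank-two, matrices of norm one, resp.\ a spherical section of the quadric), and an explicit smooth section of $T\eta$ through an arbitrary $B_0\in T_{p'}\eta$ is produced by $B(Z)=(\mathbb{I}-ZZ^\ast)B_0(\mathbb{I}-Z^\ast Z)$, whose $\bar L$-derivative along holomorphic curves of singular vectors can be computed in closed form (e.g.\ $\bar L B=-a\alpha^\ast B_0-B_0\beta b^\ast$ in the $D_I$ case), after which $\ker R^S(\cdot,B_0)$ is read off by testing against $a^\ast$ and $b$. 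Until some such computation is supplied for each of the four families, the proposal is a correct reduction of the theorem to \Cref{cor:transversal} plus four unproven claims, not a proof.
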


Let us note that \Cref{thm:zusammenfassung} in particular applies to the setting
of (appropriate) {\em boundary values of proper mappings between classical symmetric domains}.

The conditions on the number of positive Levi eigenvalues given in \Cref{thm:zusammenfassung} are sharp in the following sense: In the case of $D_I^{m,n}$, $D_{II}^m$ and $D_{III}^m$, there are pseudoconvex hypersurfaces $M$ satisfying $n_+ = m+n-4$, $n_+ = 2m-8$ and $n_+ = m-2$, respectively, such that there exist nowhere smooth, but arbitrarily often continuously differentiable CR-transversal CR embeddings $h:M \rightarrow M'$.

On the other hand, there exists no CR transversal map $h:M \rightarrow M'$ at all if the number of positive Levi eigenvalues of $M$ exceeds the upper limit given in \Cref{thm:zusammenfassung}, which is just the number of positive Levi eigenvalues of $M'$.

\begin{rem}
We first remark that one can obtain
results in the real-analytic category with exactly the same assumptions. The conclusion in this setting is that the map $h$ extends to a holomorphic map in a full neighbourhood of an open, dense subset 
of the source manifold. For this, 
one uses the result of Mir \cite{Mir:2017dg} on real-analytic regularity. We also 
remark that if both source and target 
are real-algebraic, one can use the 
proof of the algebraicity result of 
Coupet, Meylan, and Sukhov \cite{MR1666972}
to conclude that the map $h$ is real-algebraic (this conclusion is global
in nature). 
\end{rem}

\begin{rem}
An interesting observation in the algebraic case is that  \Cref{thm:zusammenfassung} and \Cref{cor:holomorphic} yield a complete list of pairs of classical symmetric domains $(\Omega_1,\Omega_2)$ such that every proper holomorphic map $H: \Omega_1 \rightarrow \Omega_2$, which extends to $\partial \Omega_1$ with sufficient initial regularity, and does not map $\partial \Omega_1$ entirely into the non-smooth part of $\partial \Omega_2$, is necessarily algebraic.
\end{rem}

\begin{rem}
We finally note that all of our results above apply as well in the case where the source 
manifold is not embedded, but rather an ``abstract'' CR structure with the microlocal extension property; in that case, 
we have to apply the results of \cite{MR4060572} instead of \cite{lamelmir1}. We tried to avoid this more technical aspect in the presentation here, the reader 
is invited to make the obvious changes to the formulations if needed.
\end{rem}

This paper developed from the first author's master's thesis \cite{thesis}, where (slightly weaker) versions of \Cref{cor:always}, \Cref{cor:transversal} and \Cref{thm:zusammenfassung} are first proven.



\section{Preliminaries}

\subsection{CR manifolds and the Levi foliation} In this section, we will recall some basic notions and fix notation. We will be considering smooth CR submanifolds of complex Euclidean space, which we will denote by $M \subset \C^{N}$ or $M' \subset \C^{N'}$, respectively, and CR maps $h: M \rightarrow M'$. We will write $T^{0,1} M = \C TM \cap T^{0,1} \C^N$, and denote 
by $J$ the standard complex structure operator. 

A continuously differentiable map $h:M \rightarrow M'$ is called a CR map if it preserves the CR structure of its domain, i.e. if $h_\ast T_q^{0,1} M \subseteq T_{h(q)}^{0,1} M'$ for all $q\in M$. If we denote by $\iota$ the embedding of $M'$ into $\C^{N'}$, an equivalent characterization is that $\iota \circ h: M \rightarrow \C^{N'}$ is a CR map, which just means that each coordinate component of $h$ is a CR function.

We recall that the Levi form of a hypersurface $M\subset \C^{N'}$ is 
defined by $\mathcal{L} (\bar L, \bar \Gamma) = \frac{1}{2i}[\bar L, \Gamma] \mod T^{0,1} M \oplus T^{1,0} M $; given a defining function $\rho$ for 
$M$, we define $\Theta = i (\partial \rho - \bar \partial \rho)$, and refer to 
$\mathcal{L}_{\Theta} (\bar L, \bar \Gamma) = \frac{1}{2i}\Theta ( [\bar L, \Gamma] ) $
as a scalar Levi form for $M$. 

Our target manifolds will be \emph{uniformly pseudoconvex hypersurfaces}, i.e. real hypersurfaces of $\C^{N'}$ with positive semidefinite Levi form, and a constant number of zero and positive eigenvalues everywhere, respectively. It will turn out that these are foliated by complex manifolds.

In this paper, a \emph{foliation} $\eta$ of an $n$-dimensional (real) manifold $M$ is a collection $\{\eta_q: q\in M\}$ of $k$-dimensional immersed submanifolds, where $q \in \eta_q$, which partitions $M$, i.e. $\eta_p$ and $\eta_q$ are either disjoint or identical for any two $p,q\in M$, and such that for any $p \in M$, there exists a neighborhood $O$ of $p$ and coordinates $\phi:O \rightarrow \R^n$ such that for any $q \in O$, \emph{the connected component} of $\eta_q \cap O$ containing $q$ is just given by the coordinate plane $(\phi_1(p),\dots,\phi_{n-k}(p),\cdot,\dots,\cdot) \cap \phi(O)$.

The bundle $T\eta := \bigcup_{q \in M} T_q\eta_q$ of tangent spaces to leaves then forms a smooth integrable distribution on $TM$. We will also consider the bundle $T^{0,1}\eta := \bigcup_{q \in M} T^{0,1}_q\eta_q$ of CR tangent spaces to leaves, and always write $T_q\eta := T_q\eta_q$ and $T_q^{0,1}\eta := T_q^{0,1}\eta_q$ for simplicity.

If the rank of the Levi form of a CR manifold $M$ is constant in a neighborhood $U$ of a point $p \in M$, there exists a foliation $\eta$ of $U$ by complex manifolds, such that the Levi null space at any $q \in U$ is precisely given by the CR tangent space at $q$ to the the leaf of the foliation through $q$, henceforth denoted by $T^{0,1}_q\eta$. This foliation, discovered in the hypersurface case by Sommer \cite{sommer} and proven to exist in general CR submanifolds by Freeman \cite{freeman1} is thus called the \emph{Levi foliation}.

\begin{thm}
\label{thm:levifoliation}
Let $M \subset \C^N$ be a CR manifold, and suppose that its Levi form has constant rank. Then there is a foliation $\eta$ of $U$ by complex manifolds, such that the Levi null spaces $\mathcal{N}_q \subseteq T_q^{0,1}M$ for $q \in U$ are given by $T_q^{0,1}\eta$.
\end{thm}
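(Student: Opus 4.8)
The plan is to realize $\eta$ by applying the Frobenius theorem to a real distribution built from the Levi null directions, and then to identify the leaves as complex submanifolds via the Newlander--Nirenberg theorem. I would first fix notation: let $\mathcal{N}_q := \{X \in T_q^{0,1}M : \mathcal{L}(X,Y) = 0 \text{ for all } Y \in T_q^{0,1}M\}$ be the Levi null space; unwinding the definition, $X \in \mathcal{N}_q$ means $[X,\bar Y]_q \in T_q^{0,1}M \oplus T_q^{1,0}M$ for every local section $Y$ of $T^{0,1}M$, and conjugating, equivalently $[\bar X,Y]_q \in T_q^{0,1}M \oplus T_q^{1,0}M$ for all such $Y$. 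The constant rank hypothesis says that $\dim_\C \mathcal{N}_q$ is constant, so $\mathcal{N} = \bigcup_q \mathcal{N}_q$ is a smooth subbundle of $T^{0,1}M$ (the radical of a smoothly varying Hermitian form of constant rank; in positive CR codimension this smoothness is the substantial point established by Freeman, while for hypersurfaces it is elementary). I then set $\mathcal{D} := \mathcal{N} \oplus \overline{\mathcal{N}} \subset \C TM$; since $\overline{\mathcal{D}} = \mathcal{D}$, it is the complexification of the smooth real distribution $\mathcal{D}^{\R} := \mathcal{D} \cap TM$, of rank $2\dim_\C \mathcal{N}$.

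The heart of the argument is to show $\mathcal{D}$ is involutive (which, since a real bracket of sections of $\mathcal{D}^\R$ lies in $\mathcal{D}\cap TM$, is the same as involutivity of $\mathcal{D}^\R$). By conjugation symmetry it suffices to verify, for $X_1, X_2 \in \Gamma(\mathcal{N})$, that $[X_1,X_2]$ and $[X_1,\bar X_2]$ are sections of $\mathcal{D}$. For the first, $[X_1,X_2]\in\Gamma(T^{0,1}M)$ by integrability of $T^{0,1}M$, and to see it is Levi null I would compute, for arbitrary $Y\in\Gamma(T^{0,1}M)$, via the Jacobi identity
\begin{align*}
[[X_1,X_2],\bar Y] = [X_1,[X_2,\bar Y]] - [X_2,[X_1,\bar Y]].
\end{align*}
Levi nullity of $X_2$ lets me write $[X_2,\bar Y] = A + B$ with $A\in\Gamma(T^{0,1}M)$, $B\in\Gamma(T^{1,0}M)$; then $[X_1,A]\in\Gamma(T^{0,1}M)$ by integrability, while $[X_1,B]_q\in T_q^{0,1}M\oplus T_q^{1,0}M$ because $B=\bar Z$ for some $Z\in\Gamma(T^{0,1}M)$ and $X_1$ is Levi null. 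Hence $[X_1,[X_2,\bar Y]]_q\in T_q^{0,1}M\oplus T_q^{1,0}M$, and the analogous bookkeeping handles $[X_2,[X_1,\bar Y]]$, so $[X_1,X_2]$ is Levi null. For the mixed bracket, Levi nullity of $X_1$ gives $[X_1,\bar X_2] = P + Q$ with $P\in\Gamma(T^{0,1}M)$, $Q\in\Gamma(T^{1,0}M)$, and the same kind of Jacobi expansion of $[P,\bar Y] = [[X_1,\bar X_2],\bar Y] - [Q,\bar Y]$ — using repeatedly that a bracket of two sections of $T^{1,0}M$ stays in $T^{1,0}M$ and that $X_1,X_2$ are Levi null — shows $P$ is Levi null; applying this with the indices swapped to $[X_2,\bar X_1] = -\overline{[X_1,\bar X_2]}$ shows $\overline{Q}$ is Levi null, i.e.\ $Q\in\Gamma(\overline{\mathcal{N}})$. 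Thus $\mathcal{D}$ is involutive.

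With involutivity established, the real Frobenius theorem produces a foliation $\eta$ of a neighborhood $U$ of $p$ whose leaves $L$ satisfy $\C T_qL = \mathcal{D}_q = \mathcal{N}_q\oplus\overline{\mathcal{N}_q}$. Since $\mathcal{N}_q\subset T_q^{0,1}\C^N$, this tangent space is $J$-invariant with $T_q^{0,1}L = \C T_qL\cap T_q^{0,1}\C^N = \mathcal{N}_q$; and since $\mathcal{N}$ is involutive, the induced CR structure on $L$ is integrable of CR codimension $0$, so $L$ is a complex submanifold of $\C^N$ by Newlander--Nirenberg. By construction $T_q^{0,1}\eta = T_q^{0,1}L = \mathcal{N}_q$ is the Levi null space at $q$, which is the assertion.

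I expect the genuinely delicate point to be the smoothness of $\mathcal{N}$ as a subbundle in the case of positive CR codimension, where ``constant rank'' must be interpreted as constancy of $\dim_\C\mathcal{N}_q$ rather than merely of the rank of the Levi form as a bundle map — this is exactly Freeman's contribution, and for the hypersurface targets relevant to our applications it is routine linear algebra. The involutivity computation, although it is the conceptual core, is a bounded amount of Jacobi-identity bookkeeping once one consistently tracks the $T^{0,1}M$/$T^{1,0}M$ decomposition and uses integrability of $T^{0,1}M$; Frobenius and Newlander--Nirenberg then enter only as black boxes.
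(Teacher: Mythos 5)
Your proof is correct and follows essentially the same route as the paper's: both pass to the real distribution underlying the Levi-null spaces, establish involutivity via the Jacobi identity, and apply the real Frobenius theorem, with the complex structure of the leaves coming from the $J$-invariance of $\mathcal{N}_q \oplus \overline{\mathcal{N}_q}$. The only difference is organizational: the paper first characterizes sections of the real null distribution as exactly those $V \in \Gamma(T^cM)$ with $[V,\Gamma(T^cM)] \subseteq \Gamma(T^cM)$, after which closure under brackets is a two-line Jacobi computation, whereas you carry out the equivalent type-decomposition bookkeeping on $\mathcal{N}\oplus\overline{\mathcal{N}}$ directly.
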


\begin{proof}
Let $N_q = \{\frac{1}{2}(\Bar L_q + L_q), \Bar L_q \in \mathcal{N}_q\}$. Because the rank of the Levi null space is constant across $M$, the union $N = \bigcup_{q \in M} N_q$ yields a smooth real distribution on $M$. By Frobenius' theorem, integrability of this distribution is equivalent to the submodule $\Gamma_q(N)$ of germs of sections of $N$ at $q$ being closed under taking Lie brackets, for every $q \in M$.
Since for any $\Bar L, \Bar \Gamma \in \Gamma(T^{0,1}M)$ we have
\begin{align*}
    [\tfrac{1}{2}(\Bar L + L), \tfrac{1}{2}(\Bar \Gamma + \Gamma)] = \tfrac{1}{4}[\Bar L, \Bar \Gamma] + \tfrac{1}{4}[L, \Gamma] + \tfrac{1}{4}[L, \Bar \Gamma] + \tfrac{1}{4}[\Bar L, \Gamma] = -\Im\left(\mathcal{L}(\Bar L, \Bar \Gamma)\right),
\end{align*}
a given germ of a vector field $\frac{1}{2}(\bar L+L) \in \Gamma_q(T^cM)$ is a section of $N$ if and only if $[\frac{1}{2}(\bar L+L),\Gamma_q(T^cM)] \subseteq \Gamma_q(T^cM)$. Taking two sections $V, W \in \Gamma_q(N)$, we see thus that $[V,W] \subseteq [\Gamma_q(N),\Gamma_q(T^cM)] \subseteq \Gamma_q(T^cM)$, and by the Jacobi identity, 
\begin{align*}
    \left[[V,W],\Gamma_q(T^cM)\right] &\subseteq \left[V,[W,\Gamma_q(T^cM)]\right] - \left[W,[V,\Gamma_q(T^cM)]\right]  \\ &\subseteq 
    \left[V,\Gamma_q(T^cM)\right] - \left[W,\Gamma_q(T^cM)\right] \subseteq \Gamma_q(T^cM),
\end{align*}
showing that $\Gamma_q(N)$ is indeed closed under taking Lie brackets. Therefore, the Levi foliation exists, and since $N_q \subset T_q\C^N$ is a complex subspace for any $q \in M$, the leaves of this foliation are complex manifolds.
\end{proof}

\subsection{Irregular CR maps and formal holomorphic foliations}
Even though we are interested in the regularity of mappings, 
our results are obtained in a contrapositive way: We show
that the existence of irregular maps forces some geometric
property (namely, the existence of complex
varieties, see \Cref{thm:main} below). As a guiding principle, we therefore review a couple of natural instances in 
which irregular maps exist. We begin by considering CR functions, in a slight adaptation of \cite[Theorem 2.7]{bx}.

\begin{exa}
Let $M \subset \C^N$ be a strongly pseudoconvex CR hypersurface and $p \in M$. Then there exists a neighborhood $O \subseteq \C^N$ of $p$ such that for each $k \in \mathbb{N}_{\geq 1}$ there is a $C^k$-smooth CR function $\phi:O \cap M \rightarrow \C$ which is nowhere smooth on $O \cap M$.
\end{exa}

As an immediate conseqence, there exist nowhere smooth CR maps from $M$ into $M'$ if the target manifold $M'$ contains a complex curve $\Gamma$. Indeed, any parametrization $t \mapsto \gamma(t)$ of $\Gamma$ is a smooth CR immersion of $\C$ into $M'$, hence $\gamma \circ \phi: M \rightarrow M'$ provides a nowhere smooth CR function of regularity $C^k$.
We obtain another, more general set of examples from targets of the form $M' = \hat M \times \C \subset \C^{N+1}$ and CR functions $\hat h: M \rightarrow \hat M$. Here, the map $(\hat h,\phi): M \rightarrow \hat M \times \C$ is a CR map, since each of its components is a CR map, and it is nowhere smooth because $\phi$ is. In \cite{lamelmir1}, Lamel and Mir prove a result in the other direction, essentially stating that near a generic point, any nowhere smooth CR map formally exhibits the structure of these latter examples.

\subsection{The formal foliation theorem}\label{ss:formalfol}

Before we state the main technical theorem that we 
are going to use, we introduce some necessary concepts. A \emph{formal holomorphic submanifold} $\Gamma$ of dimension $r$ at a point $p \in \C^{N'}$ is simply a formal power series $\Gamma \in \C\llbracket t_1,\dots,t_r\rrbracket^{N'}$, $\Gamma = \sum_{\alpha \in \mathbb{N}^r}\gamma_\alpha t^\alpha$ satisfying $\gamma_0 = p$ and $\mathrm{rk} \left(\Gamma'(0)\right) = r$. It is \emph{tangential to infinite order} to a set $S \subseteq \C^{N'}$ if for any germ of a $\cinfty$-smooth function $\rho$ vanishing on $S$, the composition of $\Gamma$ with the Taylor series of $\rho$ at $p$ vanishes to infinite order. If $M$ is a CR manifold and $(\Gamma_q)_{q \in M}$ is a family of such formal holomorphic submanifolds, we call this family a \emph{CR family} if each of its coefficients is a CR map $M \rightarrow \C^{N'}$.

It turns out that the structural property of the target which forces smoothness of CR maps is the number of different directions into which successive CR derivatives of gradients of defining functions can point. This motivates the introduction of the following numerical invariants. For a CR map $h: M \rightarrow \C^{N'}$, let
\begin{align*}
    r_0(p) &:= \dim_\C \left< \left\{\rho_w\circ h (p): \rho \in \mathscr{I}_{h(M)}(h(p))\right\}\right>, \\
    r_k(p) &:= \dim_\C \left< \left\{\Bar L_1\dots \Bar L_j (\rho_w \circ h)(p): \rho \in \mathscr{I}_{h(M)}(h(p)), \Bar L_1,\dots,\Bar L_j \in \mathcal{V}_p(M), 0\leq j \leq k\right\}\right>,
\end{align*}
where we write $\mathcal{V}_p(M)$ for the set of germs of CR vector fields at $p$, and $\mathscr{I}_S(p)$ for the ideal of germs of smooth functions at $p$ which vanish on a given set $S$. The \emph{complex gradient} $\rho_w = \left(\frac{\partial \rho}{\partial w_1},\dots,\frac{\partial \rho}{\partial w_{N'}}\right)$ is considered here as a vector in $\C^{N'}$. The function $q \mapsto r_k(q)$ is integer valued and lower semicontinuous as it is given by the rank of a collection of continuously varying vectors. Of course, $r_k(p)$ is only defined if $h \in C^k$, since $\rho_w\circ h$ is only as regular as $h$ is. To extract a global invariant of $h$, let $r_k$ be the maximum value such that $r_k(p) \geq r_k$ on a dense open subset of $M$. We are now in a position to state the formal foliation theorem of Lamel and Mir (Theorem 2.2 in \cite{lamelmir1}).

\begin{thm}
\label{thm:main}
Let $M \subset \C^{N}$ be a $\cinfty$-smooth minimal CR submanifold, $k,l \in \mathbb{N}$ with $0 \leq k \leq l \leq N'$ and $N'-l+k \geq 1$ be given integers and $h:M \rightarrow \C^{N'}$ be a CR map of class $C^{N'-l+k}$. Assume that $r_k \geq l$ and that there exists a non-empty open subset $M_1$ of $M$ where $h$ is nowhere $\cinfty$. Then there exists a dense open subset $M_2 \subseteq M_1$ such that for every $p\in M_2$, there exists a neighborhood $V\subseteq M_2$ of $p$, an integer $r\geq 1$ and a $C^1$-smooth CR family of formal complex submanifolds $(\Gamma_\xi)_{\xi \in V}$ of dimension $r$ through $h(V)$ for which $\Gamma_\xi$ is tangential to infinite order to $h(M)$ at $h(\xi)$, for every $\xi \in V$.
\end{thm}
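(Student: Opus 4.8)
The plan is to play off the failure of smoothness against the reflection identities satisfied by $h$ by virtue of taking values in its own image $h(M)$, and then to run the standard dichotomy: either these identities determine the jet of $h$ well enough to force smoothness --- contradicting the hypothesis --- or they degenerate, and the null directions of the degeneracy integrate to the desired formal complex submanifolds. For a germ $\rho\in\mathscr{I}_{h(M)}(h(p))$, which we may take real, we have $\rho(h,\bar h)\equiv 0$ near $p$; since each component $h_i$ is CR, applying a CR vector field $\bar L\in\mathcal{V}_p(M)$ annihilates the terms in $\bar Lh_i$ and, after conjugating, gives $\sum_i(\rho_{w_i}\circ h)\,Lh_i\equiv 0$ for every $L\in T^{1,0}M$; iterating with further CR vector fields produces, by induction on the number $j$ of derivatives, relations in which the vectors $\bar L_1\cdots\bar L_j(\rho_w\circ h)$ are paired against $Lh$ and lower-order expressions in the jets of $h$. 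The span at $p$ of all these vectors, as $\rho$ ranges over $\mathscr{I}_{h(M)}(h(p))$ and $j\le k$, has dimension $r_k(p)$. Each $r_j$, $j\le k$, is integer valued and lower semicontinuous, so there is a dense open $W\subseteq M_1$ on which all of them are locally constant and equal to their generic values --- in particular $r_k\ge l$ on $W$ --- and on which one may pick $r_k$ of the above vectors varying as smoothly as $h$ and forming a frame for that span.

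On $W$ one runs the dichotomy. The a priori regularity $C^{N'-l+k}$ is calibrated so that one may differentiate the reflection identities $k$ times to expose the $r_k$-dimensional span, and then a further $N'-l$ times --- losing one derivative at each step --- while still being left with a continuous, hence exploitable, linear system for the conjugated derivatives of $h$. Should this system have maximal rank $N'$ on a nonempty open subset of $W$, a formal implicit function argument would show the full jet of $h$ there to be determined by finitely many of its lower order jets, and a bootstrap in the spirit of the Baouendi--Ebenfelt--Rothschild regularity theory would upgrade $h$ to $\cinfty$ on that set --- impossible, as $h$ is nowhere $\cinfty$ on $M_1\supseteq W$. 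Since the maximal-rank locus is open by lower semicontinuity, it must be empty, so at every $\xi\in W$ the rank drops: there is an integer $r(\xi)\ge 1$ and a nonzero complex subspace $E_\xi\subseteq\C^{N'}$ of dimension $r(\xi)$ annihilated by every iterated CR derivative of $\rho_w\circ h$ at $\xi$, for all $\rho\in\mathscr{I}_{h(M)}(h(\xi))$; shrinking to a dense open $M_2\subseteq W$ we may take $r(\xi)\equiv r$ constant and $E_\xi$ depending in a $C^1$ CR fashion on $\xi$.

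It remains, near any $p\in M_2$ with a suitable neighbourhood $V\subseteq M_2$, to promote the infinitesimal data $E_\xi$ to formal submanifolds $\Gamma_\xi$ through $h(\xi)$ tangential to infinite order to $h(M)$ at $h(\xi)$. The directions in $E_\xi$ are exactly the admissible first-order terms $\Gamma_\xi'(0)$, so that $\mathrm{rk}\,\Gamma_\xi'(0)=r$; the higher coefficients $\gamma_\alpha$ are then built one degree at a time by solving the linear systems obtained from the same reflection identities, differentiated now also along the Segre (parameter) directions, solvability at each degree being forced by the rank deficiency just established (the hypothesis $N'-l+k\ge 1$ ensuring the resulting family is at least $C^1$). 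That the $\Gamma_\xi$ so obtained are tangential to \emph{infinite} order --- and not merely to finite order --- to $h(M)$ rests on the \emph{minimality} of $M$: through the iterated Segre set construction of Baouendi--Ebenfelt--Rothschild the pointwise identities propagate along CR orbits, which by minimality fill out a neighbourhood of $p$. Since the $\gamma_\alpha$ are produced from the jets of $h$ by CR operations, $(\Gamma_\xi)_{\xi\in V}$ is a $C^1$-smooth CR family of formal complex submanifolds of dimension $r$, as required.

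The step I expect to be the main obstacle is the regularity bookkeeping that makes the dichotomy sharp: tracking exactly how many derivatives are consumed in iterating the reflection identities, and checking that the threshold $C^{N'-l+k}$ together with $r_k\ge l$ is at once enough to close the bootstrap to $\cinfty$ in the nondegenerate branch and to solve the order-by-order system with $C^1$ coefficients in the degenerate branch --- together with the propagation step turning finite-order into infinite-order tangency, for which minimality of $M$ is indispensable.
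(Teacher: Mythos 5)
This statement is not proved in the paper at all: it is the formal foliation theorem of Lamel and Mir, imported verbatim as Theorem~2.2 of \cite{lamelmir1}, so there is no in-paper proof to compare your attempt against. Judged on its own, your proposal does capture the broad strategy one expects behind that theorem (iterate the reflection identity $\rho\circ h\equiv 0$, use $r_k\ge l$ and the calibrated regularity $C^{N'-l+k}$ to run a rank dichotomy, and convert the rank deficiency into formal complex submanifolds), but as written it is an outline rather than a proof: every genuinely difficult step is asserted rather than carried out.

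Concretely, three gaps would each need substantial work. First, in the nondegenerate branch you invoke ``a bootstrap in the spirit of Baouendi--Ebenfelt--Rothschild'' to conclude $h$ is $\cinfty$; this is itself a deep regularity theorem for finitely nondegenerate maps out of minimal manifolds, and you neither state precisely which version you need nor verify that the hypotheses (in particular the exact amount of a priori regularity surviving after $k+(N'-l)$ differentiations) are met. Second, in the degenerate branch you need the span of the iterated derivatives of the gradients to \emph{stabilize} (once $r_{j+1}=r_j$ generically, it stays constant for all higher orders); without this, the annihilator $E_\xi$ of the order-$\le j$ derivatives need not be annihilated by higher-order ones, and your order-by-order construction of the coefficients $\gamma_\alpha$ has no reason to be solvable at every degree. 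The claim that ``solvability at each degree is forced by the rank deficiency just established'' is precisely the heart of the matter and is not justified; likewise the passage from finite-order to infinite-order tangency via minimality and Segre sets is named but not argued. Third, the assertion that $r(\xi)$ can be taken constant and $E_\xi$ chosen to depend on $\xi$ in a $C^1$ CR fashion on a dense open set requires an argument (the coefficients of the $\Gamma_\xi$ must individually be CR functions of $\xi$, which does not follow merely from semicontinuity of the rank). As it stands the proposal is a plausible roadmap to the Lamel--Mir proof, not a proof.
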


The rank $r$ of the family of holomorphic manifolds in the statement of this theorem merely serves as a reminder that in concrete cases, one can hope for a rank of more than one. Since there is no condition given when this might occur, for black-box applications of this theorem we will have to be satisfied with CR families of holomorphic curves with nonvanishing derivative, which can always be obtained by simply restricting $\Gamma_q = \sum_{\alpha \in \mathbb{N}^r} \gamma_\alpha(q) t^\alpha$ to $t = (t_1,0,\dots,0)$.

Let us remark that if $h$ is not $\cinfty$-smooth on a dense open subset of $M$, there exists an open subset $O\subseteq M$ such that $h$ is nowhere $\cinfty$-smooth on $O$. The reason is simply that the set of all points $p \in M$ such that $h$ is $\cinfty$-smooth on a neighborhood of $p$ is open. If this set is not dense, then the complement of its closure is a non-empty open subset of $M$, where, by definition, $h$ is nowhere $\cinfty$-smooth.

Another interesting point to note is that while the formal complex manifolds obtained from \Cref{thm:main} are tangential to infinite order to the image $h(M)$, infinite tangency to a non-smooth set is not nearly as strong as one might think at first sight.
As a toy example, take a nowhere smooth, but $C^1$ function $\phi: \R \rightarrow \R$ and consider its graph $S := \{(x,\phi(x)), x \in \R \} \subset \R^2$. Then any function $\rho \in \cinfty(\R^2)$ vanishing on $S$ must already vanish to infinite order there by the following argument: If either $\rho_x$ or $\rho_y$ did not vanish at a point $(x,\phi(x))$, the implicit function theorem would yield a smooth parametrization of $S$ near that point, which does not exist. Thus both $\rho_x$ and $\rho_y$ vanish on $S$, and the argument may proceed at infinitum. The $y$-Axis is therefore tangential to infinite order to $S$ in the sense of \Cref{thm:main}, while not even being tangential to first order in the usual sense.
However, if $h(M) \subseteq M'$ for some smooth manifold $M'$, then tangency to infinite order to $h(M)$ clearly implies tangency to infinite order to $M'$.

To apply \Cref{thm:main}, we need $0 \leq k \leq l \leq N'$ such that $r_k\geq l$. It is always possible to choose $k=l=0$, but if $h$ maps $M$ into a CR submanifold $M' \subseteq \C^{N'}$, a slight improvement holds (Lemma 6.1 in \cite{lamelmir1}).

\begin{lem}
\label{lem:r0always}
Let $M\subset \C^N$ be a $\cinfty$-smooth CR submanifold and $h: M \rightarrow \C^{N'}$ be a continuous CR map. If there exists a $\cinfty$-smooth CR submanifold $M'\subset \C^{N'}$ such that $h(M) \subseteq M'$, then $r_0\geq N'-n'$, where $n' = \dim_{CR}M'$. In particular, if $M'$ is maximally real, then $r_0 = N'$.
\end{lem}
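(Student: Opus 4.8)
The plan is to reduce the statement to an infinitesimal computation about the complex gradients of defining functions of $M'$. Fix a point $p \in M$ and work near $h(p)$. Since $M'$ is a hypersurface, it has (locally) a single defining function $\rho$, so $\mathscr{I}_{M'}(h(p))$ is generated by $\rho$ together with functions vanishing to higher order. The key point is that for such generators the contribution to $r_0(p)$ is exactly the span of $\rho_w \circ h(p)$, which is one-dimensional as long as $\rho_w \circ h(p) \neq 0$; the latter holds because $\rho$ is a genuine defining function (its differential is nonvanishing on $M'$, hence $\rho_w$ is nonvanishing there). A product or higher-order element $\sigma$ of the ideal has complex gradient $\sigma_w$ vanishing at $h(p)$, so it contributes nothing new to $r_0(p)$. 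Hence $r_0(p) = 1 = N' - n'$ when $M'$ is a hypersurface, i.e. $n' = N' - 1$.

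To handle the general case of a CR submanifold $M'$ of CR dimension $n'$, I would pass to a local defining system: near $h(p)$ there are real-valued smooth functions $\rho_1,\dots,\rho_{d}$ with linearly independent differentials cutting out $M'$, where $d = 2N' - \dim_\R M'$ is the real codimension. The span of the complex gradients $(\rho_1)_w(h(p)),\dots,(\rho_d)_w(h(p))$ inside $\C^{N'}$ has complex dimension at least $N' - n'$: indeed, the real span of the differentials $d\rho_j$ at $h(p)$ is the real conormal space $N^*_{h(p)}M'$, which has real dimension $d$; under the identification of $d\rho$ with $2\,\mathrm{Re}(\rho_w\, dw)$, the complex span of the $\rho_w$'s is the complexification $\mathbb{C}\otimes N^*M'$ modulo the relation coming from the reality, and a dimension count (the annihilator of this complex span in $\C^{N'}$ is exactly $T^{1,0}_{h(p)}M'$, which has complex dimension $n'$) gives $\dim_\C\langle(\rho_j)_w\rangle = N' - n'$. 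Since any other element of the ideal is a combination of the $\rho_j$ with smooth coefficients plus higher-order terms, and the higher-order terms have vanishing complex gradient at $h(p)$, we get $r_0(p) \geq N' - n'$ at every $p$, hence $r_0 \geq N' - n'$. If $M'$ is maximally real then $n' = 0$ and the complex gradients span all of $\C^{N'}$, giving $r_0 = N'$.

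The one genuinely delicate point — and the step I expect to be the main obstacle — is the linear-algebra identification in the general-codimension case: one must verify that the complex span of the gradients $(\rho_j)_w$ at $h(p)$ has dimension exactly $N' - n'$ (not less), equivalently that its annihilator is precisely the holomorphic tangent space $T^{1,0}_{h(p)}M'$. This requires being careful that the $d$ real differentials, which are $\mathbb{R}$-linearly independent, do not degenerate to fewer than $N' - n'$ complex-linearly-independent gradients; the correct bound follows because $T^{1,0}M'$ is by definition the common kernel of all the $(\rho_j)_w$ (viewed as $\mathbb{C}$-linear functionals $v \mapsto \sum_k (\rho_j)_{w_k} v_k$), and $\dim_\C T^{1,0}M' = n'$ by definition of CR dimension, so the span of the gradients, being the annihilator of an $n'$-dimensional subspace, is $(N'-n')$-dimensional. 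For the hypersurface case that actually concerns us, this collapses to the trivial observation $\rho_w(h(p)) \neq 0$, which is immediate.
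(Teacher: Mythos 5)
Your proof is correct. Note that the paper itself gives no proof of this lemma --- it is quoted verbatim as Lemma~6.1 of \cite{lamelmir1} --- but your argument is exactly the standard one: since $h(M)\subseteq M'$ we have $\mathscr{I}_{M'}(h(p))\subseteq\mathscr{I}_{h(M)}(h(p))$, so it suffices to bound the span of the gradients of a defining system $\rho_1,\dots,\rho_d$ from below, and your annihilator computation (the common kernel of the functionals $v\mapsto\sum_k(\rho_j)_{w_k}v_k$ is precisely $T^{1,0}_{h(p)}M'$, of dimension $n'$) does this correctly and is the genuine content of the lemma. Two cosmetic remarks: you should state the inclusion of ideals explicitly, since $r_0$ is defined via $\mathscr{I}_{h(M)}(h(p))$ rather than $\mathscr{I}_{M'}(h(p))$; and your claim of \emph{equality} $r_0(p)=N'-n'$ is not justified (if $h(M)$ is a proper subset of $M'$, the larger ideal can contribute additional gradient directions --- e.g.\ for constant $h$ one gets $r_0(p)=N'$), but only the lower bound is asserted in the lemma, so this does not affect the proof.
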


If it is guaranteed that enough CR directions tangential to $h(M)$ exist along which $M'$ behaves like a Levi nondegenerate manifold, we can say more about the first derivatives of gradients, yielding a bound on $r_1$. We record here for later use a result similar to Lemma 6.2. in \cite{lamelmir1}.

\begin{lem}
\label{lem:r1}
Consider a $C^\infty$-smooth CR submanifold $M\subset \C^N$, a $C^\infty$-smooth real hypersurface $M' \subset \C^{N'}$ and a continuously differentiable CR map $h: M \rightarrow M'$ mapping $p \in M$ to $\pp \in M'$. If $h$ is immersive at $p$ and a scalar Levi form $\mathcal{L}_\Theta$ of $M'$ restricts to a nondegenerate Hermitian form on $h_\ast T_p^{0,1}M$, then $r_1 \geq \dim_{CR}M + 1$ on a neighborhood of $p$.
\end{lem}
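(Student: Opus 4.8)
The plan is to work in suitable local coordinates near $p' \in M'$ and explicitly identify the component functions $\rho_w \circ h$ and their first CR derivatives, showing they already span a space of dimension at least $\dim_{CR} M + 1$. Since $M'$ is a real hypersurface and $\mathcal{L}_\Theta$ restricts nondegenerately to $h_\ast T_p^{0,1} M$, I would first choose a defining function $\rho$ for $M'$ near $p'$ in the normal form $\rho(w,\bar w) = \mathrm{Re}\, w_{N'} + \sum_{j} \epsilon_j |w_j|^2 + o(|w|^2)$ (or more precisely, so that the Hermitian part of the Levi form is diagonalized on the relevant subspace). Then $\rho_w$ has components $\rho_{w_{N'}} \approx \tfrac12$ and $\rho_{w_j} \approx \epsilon_j \bar w_j + \dots$. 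The key computation: for $\bar L \in \mathcal{V}_p(M)$ with $h_\ast \bar L \neq 0$ at $p$, applying $\bar L$ to $\rho_{w_j}\circ h = \epsilon_j \overline{h_j} + \dots$ produces $\epsilon_j \,\bar L(\overline{h_j})(p)$, and since $h$ is a CR map, $\overline{h_j}$ are anti-CR functions so $\bar L \overline{h_j} = \overline{L h_j}$ picks up exactly the $(1,0)$-part of the pushed-forward vector, which is nonzero precisely because $h$ is immersive at $p$.

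The main steps, in order: (i) set up the normalized defining function so that the Levi form on $h_\ast T_p^{0,1}M$ is the identity (possible by the nondegeneracy hypothesis, after a $\C$-linear change of target coordinates); (ii) observe that $r_0(p) \geq 1$ trivially from $\rho_{w_{N'}}\circ h(p) \neq 0$, giving the "$+1$"; (iii) pick a basis $\bar L_1,\dots,\bar L_n$ ($n = \dim_{CR}M$) of $T_p^{0,1}M$ whose images $h_\ast \bar L_i$ are linearly independent (immersivity) and span a subspace on which $\mathcal{L}_\Theta$ is nondegenerate; (iv) compute the $n$ vectors $\bar L_i(\rho_w \circ h)(p) \in \C^{N'}$ and show, using the Hermitian nondegeneracy, that together with $\rho_w\circ h(p)$ they are linearly independent — the Levi-form pairing $\Theta([\bar L_i, L_j])$ being nondegenerate translates into a nondegenerate Gram-type matrix formed from these derivative vectors paired against the $h_\ast L_j$. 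Lower semicontinuity of $r_1$ then propagates the bound to a neighborhood of $p$, which is the final clause.

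The main obstacle I expect is step (iv): carefully matching the intrinsic statement "$\mathcal{L}_\Theta$ is nondegenerate on $h_\ast T_p^{0,1}M$" with the concrete linear-algebra condition that the vectors $\{\rho_w\circ h(p)\} \cup \{\bar L_i(\rho_w\circ h)(p)\}$ are linearly independent in $\C^{N'}$. The bridge is the identity $\bar L_i(\rho_w\circ h) \cdot \overline{(h_\ast L_j)} = \bar L_i \bar L_j'(\rho\circ h) + (\text{terms involving }\rho\text{ and first derivatives of }h\text{ that are controlled})$, which after using $\rho\circ h \equiv 0$ (since $h(M)\subseteq M'$) and the definition of the Levi form reduces, modulo lower-order terms that vanish or are absorbed, to the Levi matrix $(\mathcal{L}_\Theta(h_\ast\bar L_i, h_\ast \bar L_j))$. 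Getting the bookkeeping of these "lower-order terms" right — in particular verifying that differentiating $\rho\circ h\equiv 0$ twice produces exactly the Levi-form expression plus terms that do not destroy nondegeneracy — is the delicate part; it is essentially the same computation as in Lemma 6.2 of \cite{lamelmir1}, adapted to track the single extra dimension coming from $\rho_{w_{N'}}$.
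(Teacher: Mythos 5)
Your proposal follows essentially the same route as the paper's proof: since $h$ is CR, $\bar L (\rho_w\circ h)(p)$ equals the Levi matrix of $\rho$ at $\pp$ applied to the components of $h_\ast \bar L$, so immersivity plus the nondegeneracy hypothesis yield $\dim_{CR}M$ linearly independent derivative vectors, $\rho_w(\pp)$ supplies the extra dimension (in the paper via its orthogonality to $T^{0,1}_{\pp}M'$, in your normal form automatically), and lower semicontinuity of $r_1$ gives the neighborhood statement. The only comment worth adding is that the ``delicate bookkeeping of lower-order terms'' you anticipate in step (iv) does not actually arise: because $\bar L h_k=0$ and everything is evaluated at $p$, the pairing of $\bar L_i(\rho_w\circ h)(p)$ against the components of $h_\ast \bar L_j$ reproduces $\mathcal{L}_\Theta(h_\ast\bar L_j,h_\ast\bar L_i)$ exactly, with no correction terms to absorb.
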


\begin{proof}
Since we are in a purely local setting, we may assume that $\mathcal{L}_\Theta$ arises from a defining function $\rho$ of $M'$, such that for any two CR vectors $\Bar \Gamma = \sum_{j=1}^{N'} \Bar \Gamma_j \frac{\partial}{\partial \Bar w_j}|_\pp$ and $\Bar L = \sum_{k=1}^{N'} \Bar L_k \frac{\partial}{\partial \Bar w_k}|_\pp$ we have
\begin{align*}
    \mathcal{L}_\Theta(\Bar \Gamma, \Bar L) = \sum_{j,k=1}^{N'} \frac{\partial^2 \rho}{\partial w_j \partial \Bar w_k}( \pp) \Gamma_j \Bar L_k.
\end{align*}
By definition $\Bar L \rho_w = \sum_{j=1}^{N'} \Bar L_k \frac{\partial^2 \rho}{\partial w_j \partial \Bar w_k}(\pp)$, so using the standard scalar product on $\C^{N'}$ we can express $\mathcal{L}_\Theta(\Bar \Gamma, \Bar L) = \left((\Bar \Gamma_1,\dots,\Bar \Gamma_{N'})|\Bar L \rho_w \right)_{\C^{N'}}$. Nondegeneracy of the restricted Levi form on $h_\ast T_p^{0,1}M$ precisely means that the map $h_\ast \Bar L \mapsto \mathcal{L}_\Theta(\cdot, h_\ast \Bar L)$ is an isomorphism of $h_\ast T_p^{0,1}M$ and the space of antilinear functionals on $h_\ast T_p^{0,1}M$. Since $h$ is immersive, $h_\ast$ is an isomorphism between $T_p^{0,1}M$ and $h_\ast T_p^{0,1}M$. The map associating to each $\Bar L \in T_p^{0,1}M$ the antilinear functional $\mathcal{L}_\Theta(\cdot, h_\ast \Bar L) = \left( \hspace{1pt} \cdot \hspace{2pt} | \Bar L (\rho_w \circ h)\right)_{\C^{N'}}$ is thus an isomorphism, in particular implying that $\dim_\C \{\Bar L (\rho_w \circ h): \Bar L \in T_p^{0,1}M\} = \dim_{CR} M$.
Furthermore, the complex gradient $\rho_w(\pp)$ itself is linearly independent of $\Bar L (\rho_w \circ h)$ for any nonzero $\Bar L \in T_\pp^{0,1}M'$ by the following argument. For any $\Bar \Gamma = \sum_{j=1}^{N'} \Bar \Gamma_j \frac{\partial}{\partial \Bar w_j}|_\pp \in T_\pp^{0,1}M'$, tangency implies that
\begin{align*}
    \Gamma \rho &= \sum_{j=1}^{N'} \Gamma_j \frac{\partial \rho}{\partial w_j}(\pp) = \left( (\Bar \Gamma_1,\dots,\Bar \Gamma_{N'})| \rho_w(\pp) \right)_{\C^{N'}} = 0.
\end{align*}
Thus $\rho_w(\pp)$ lies in the orthogonal complement of $\left\{(\Bar \Gamma_1,\dots,\Bar \Gamma_{N'}): \Bar \Gamma \in T_\pp^{0,1}M'\right\}$ while $\Bar L (\rho_w \circ h)$ does not, showing linear independence. This implies $r_1(p) \geq \dim_{CR} M + 1$ and since $r_1$ is lower semicontinuous and integer valued, $r_1 \geq \dim_{CR} M + 1$ holds on a neighborhood of $p$ as claimed.
\end{proof}

As we will have to treat non-immersive maps as well, let us note the following simple, but slightly clunky consequence of the previous proof.

\begin{cor}
\label{cor:r1}
If for a $C^\infty$-smooth CR submanifold $M\subset \C^N$, a $C^\infty$-smooth real hypersurface $M' \subset \C^{N'}$ and a continuously differentiable CR map $h: M \rightarrow M'$ mapping $p \in M$ to $\pp \in M'$ there exists a CR submanifold $S \subset M$ containing $p$, such that the restricted map $h|_S$ satisfies the hypothesis of \Cref{lem:r1}, then $r_1 \geq \dim_{CR} S + 1$ on a neighborhood of $p$ in $M$.
\end{cor}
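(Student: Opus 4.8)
The plan is to reduce everything to \Cref{lem:r1} applied to the restricted map $h|_S\colon S\to M'$ and then to transport the information obtained along $S$ back to $M$. Fix a defining function $\rho$ of $M'$ near $\pp$ whose associated scalar Levi form $\mathcal{L}_\Theta$ is the one appearing in the hypothesis. Since $h(M)\subseteq M'$, the germ $\rho$ vanishes on $h(M)$, so $\rho\in\mathscr{I}_{h(M)}(\pp)$; consequently the vector $\rho_w\circ h(p)$ (the $j=0$ contribution) and all vectors $\Bar L(\rho_w\circ h)(p)$ with $\Bar L\in\mathcal{V}_p(M)$ belong to the generating set whose span has dimension $r_1(p)$.

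First I would run the proof of \Cref{lem:r1} verbatim with $S$ and $h|_S$ in place of $M$ and $h$: the hypotheses hold by assumption, and the argument produces the conclusion that $\{\Bar L(\rho_w\circ (h|_S))(p):\Bar L\in T_p^{0,1}S\}$ spans a subspace of $\C^{N'}$ of dimension exactly $\dim_{CR}S$, and that $\rho_w(\pp)$ is linearly independent of it. Hence there are $\dim_{CR}S+1$ linearly independent vectors among $\{\rho_w(\pp)\}\cup\{\Bar L(\rho_w\circ (h|_S))(p):\Bar L\in T_p^{0,1}S\}$.

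Next I would identify these vectors inside the data defining $r_1(p)$ for the unrestricted map $h$. The vector $\rho_w(\pp)=\rho_w\circ h(p)$ is already the $j=0$ term. Given $\Bar L_p\in T_p^{0,1}S$, note that $T_p^{0,1}S\subseteq T_p^{0,1}M$ (because $T^{0,1}S=\C TS\cap T^{0,1}\C^N\subseteq \C TM\cap T^{0,1}\C^N$), so, $T^{0,1}M$ being a smooth subbundle of $\C TM$, we may extend $\Bar L_p$ to a germ of a CR vector field $\Bar L\in\mathcal{V}_p(M)$. Since $\Bar L_p\in\C T_pS$, the first-order derivative of the $C^1$ map $\rho_w\circ h$ at $p$ along $\Bar L$ depends only on the restriction of this map to $S$, i.e. $\Bar L(\rho_w\circ h)(p)=\Bar L_p\big(\rho_w\circ (h|_S)\big)$. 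Thus all $\dim_{CR}S+1$ linearly independent vectors found above lie in the span defining $r_1(p)$, whence $r_1(p)\geq\dim_{CR}S+1$; since $q\mapsto r_1(q)$ is integer valued and lower semicontinuous, the inequality $r_1\geq\dim_{CR}S+1$ persists on a neighborhood of $p$ in $M$.

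The only point requiring care — and the main (minor) obstacle — is the last identification: one must verify that the derivatives transverse to nothing but tangent to $S$ appearing in the \Cref{lem:r1} argument really are derivatives of $\rho_w\circ h$ along bona fide CR vector fields of the ambient source $M$. This is exactly the content of the subbundle-extension step together with the elementary fact that a directional derivative at $p$ in a direction tangent to $S$ only sees $h|_S$.
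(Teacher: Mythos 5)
Your argument is correct and follows essentially the same route as the paper: apply \Cref{lem:r1} to $h|_S$, observe that $\rho_w(\pp)$ together with the derivatives along a basis of $T_p^{0,1}S$ give $\dim_{CR}S+1$ independent vectors counted by $r_1(p)$, and conclude by lower semicontinuity. Your explicit verification that vectors in $T_p^{0,1}S\subseteq T_p^{0,1}M$ extend to germs in $\mathcal{V}_p(M)$ and that the resulting first-order derivatives only see $h|_S$ is exactly the point the paper leaves implicit in its ``retracing the proof'' step.
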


\begin{proof}
Take a basis $(\bar L_j)_{j=1}^{\dim_{CR}S}$ of $T^{0,1}_p S$.
Retracing the proof of \Cref{lem:r1}, we see that for any defining function $\rho$ of $M$, the vectors $\rho_w(p')$ and $\bar L_j(\rho_w\circ h), 1 \leq j \leq \dim_{CR} S$ are linearly independent, hence $r_1(p) \geq \dim_{CR} S + 1$. But $r_1(q)$ is lower semicontinuous in $q$ on $M$, hence $r_1 \geq \dim_{CR} S + 1$ in a neighborhood of $p$ as claimed.
\end{proof}

\section{The invariant $\nu$ and the proof of \cref{cor:always}}

As an example of a hypersurface foliated by complex manifolds, where an unconditional regularity result must necessarily fail, Lamel and Mir consider the \emph{tube over the light cone} $M' := \{(z_1,\dots,z_{N'-1},z_{N'}): \Re(z_1)^2 + \dots + \Re(z_{N'-1})^2 = \Re(z_{N'})^2, \ z_{N'} \neq 0\}$. They obtain the following result (Corollary 2.6 in \cite{lamelmir1}).

\begin{thm}\label{thm:lightcone}
Let $M \subset \C^N$ be a $\cinfty$-smooth minimal CR submanifold and $M' \subseteq \C^{N'}$ be the tube over the light cone. Then every CR map $h:M \rightarrow M'$, of class $C^{N'-1}$ and of rank $\geq 3$, is $\cinfty$-smooth on a dense open subset of $M$.
\end{thm}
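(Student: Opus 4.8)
The plan is to deduce \Cref{thm:lightcone} from \Cref{cor:always} (we may assume $M$ connected, restricting to a component if necessary). This needs exactly two geometric facts about the target $M'$: that $M'$ is a uniformly pseudoconvex hypersurface whose Levi foliation $\eta$ satisfies $\nu_{p'}=0$ for every $p'\in M'$, and that each leaf of $\eta$ is contained in an affine complex line. Granting these, \Cref{cor:always} says that the $C^{N'-1}$ CR map $h\colon M\to M'$ from the minimal $M$ is either $\cinfty$ on a dense open subset --- which is the conclusion --- or satisfies $h(M)\subseteq\eta_{h(p)}$ for every $p$; in the latter case $h(M)$ lies in a $2$-real-dimensional affine subspace, so $\operatorname{rank} h\leq 2$, contradicting the hypothesis $\operatorname{rank} h\geq 3$. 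So all the work is in the two facts about $M'$.

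Write $z_j=x_j+iy_j$, so that $M'$ is the tube over the smooth part of the light cone with defining function $\rho(z)=x_1^2+\dots+x_{N'-1}^2-x_{N'}^2$. For $p'=x+iy\in M'$, the base ruling $\R x$ yields the affine complex line $p'+\C x\subset M'$, since $\Re(p'+\zeta x)=(1+\Re\zeta)\,x$ again lies on the cone. This line is a complex submanifold of $M'$, so its CR tangent sits inside the Levi null space of $M'$; a dimension count then forces equality, because the cross-sections of the light cone are round spheres, so the base cone has one vanishing and $N'-2$ nonvanishing principal curvatures, all of one sign. (Equivalently: the solid future light cone is convex, so its tube is convex and $M'$ is pseudoconvex.) Hence $M'$ is uniformly pseudoconvex, $\dim_\C\eta=1$, and the leaf through $p'$ is contained in $p'+\C x$.

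Next I would compute $\nu_{p'}$. Put $\xi_0:=\sum_j x_j\,\partial/\partial\bar z_j$. Since $\rho_{\bar z_j}=\pm x_j$ and $\sum_j(\pm x_j)x_j=\rho(p')=0$, the vector $\xi_0$ lies in $T^{0,1}_{p'}M'$. Taking the smooth extension $V(z)=\Re z=\tfrac{1}{2}(z+\bar z)$ of the leaf direction at $p'$, componentwise differentiation gives $\bar L_{p'}V=\tfrac{1}{2}(\bar L_1,\dots,\bar L_{N'})$ for $\bar L_{p'}=\sum_k\bar L_k\,\partial/\partial\bar z_k\in T^{0,1}_{p'}M'$, so $\mathbb P_{T\C^{N'}/T\eta}(\bar L_{p'}V)=0$ exactly when $(\bar L_1,\dots,\bar L_{N'})\in\C x$, i.e.\ exactly when $\bar L_{p'}\in\C\xi_0$. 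Therefore the kernel of $\bar L_{p'}\mapsto\mathbb P_{T\C^{N'}/T\eta}(\bar L_{p'}V)$ equals $\C\xi_0$, of dimension $1$; by the tensoriality recorded in the excerpt this kernel is independent of the nonzero leaf vector chosen, so $\nu_{p'}=1-\dim_\C\eta=0$ for every $p'$. The computation itself is short; the one thing to get right is the identification of $\eta$ (its leaves are complex lines, $\dim_\C\eta=1$) together with the observation that the ``radial'' CR vector $\xi_0$ is tangent to $M'$, which is exactly what keeps the kernel one-dimensional and makes $\nu_{p'}=0$. The genuine substance of the theorem is carried by \Cref{cor:always} --- equivalently, by the formal foliation theorem \Cref{thm:main} behind it --- which we invoke as a black box.

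With both facts in hand the proof is complete: \Cref{cor:always} gives the dichotomy and $\operatorname{rank} h\geq 3$ kills the degenerate case. Alternatively one can argue directly, as Lamel--Mir originally did, without \Cref{cor:always}: if $h$ is nowhere $\cinfty$ on some open set, then $r_0\geq 1$ by \Cref{lem:r0always}, so \Cref{thm:main} applies with $k=0$, $l=1$ (legitimate as $h\in C^{N'-1}$) and yields a $C^1$ CR family of formal holomorphic curves tangential to infinite order to $M'$; since $M'$ is real-algebraic these curves lie formally inside $M'$, hence inside its complex-line leaves, and the rank obstruction follows from there. Routing everything through \Cref{cor:always} is cleaner, and the $\nu$-computation above is the only ingredient genuinely specific to the tube over the light cone.
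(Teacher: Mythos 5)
Your proposal is correct and follows exactly the route the paper itself sets up for this statement: establish that the tube over the light cone is uniformly pseudoconvex, foliated by complex lines, with $\nu_{p'}=0$ everywhere (this is precisely the content of \Cref{exa:lightcone}, which the paper proves via the slice construction of \Cref{lem:setup}, while your direct computation of $\ker R(\cdot,V)=\C\xi_0$ is an equivalent and equally valid verification), and then invoke \Cref{cor:always}, ruling out the single-leaf alternative because a leaf lies in an affine complex line and would force $\operatorname{rank} h\leq 2$. Your $\nu$-computation and the final dichotomy argument match the paper's intended derivation, so there is nothing to add.
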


The proof given in \cite{lamelmir1} and \cite{lamelmir2} makes quite ingenious use of the simple structure of $M'$, and is thus not easily adaptable to more general settings. In this section, we shall carefully define the invariant $\nu$ mentioned in the introduction \eqref{def:nu}, and 
show how it can be used to generalize the observation of \cref{thm:lightcone} to the more general situation of pseudoconvex hypersurfaces whose Levi form is of constant rank. We will later see that
this class of examples covers not only the tube over the light cone, but also the smooth part of the boundary of all classical irreducible symmetric domains. Mappings into such targets will be discussed in \cref{sec:csd}.

\subsection{Maps into uniformly pseudoconvex hypersurfaces}
\label{sec:surfaces}
In view of the Levi foliation (\Cref{thm:levifoliation}), \Cref{thm:main} might allow for nowhere smooth maps into a uniformly pseudoconvex hypersurface, since there at least exist complex manifolds tangential to infinite order to the target manifold, contrary to the simpler case of manifolds of D'Angelo finite type. Indeed, any formal complex manifold tangential to infinite order to $M'$ is necessarily tangential to the Levi foliation.

\begin{lem}
\label{lem:fiberdirection}
Let $M' \subset \C^{N'}$ be a uniformly pseudoconvex hypersurface with its Levi foliation $\eta$, and let $p' \in M'$. Suppose there exists a formal complex curve $\Gamma = \pp + t \gamma_t + \frac{t^2}{2} \gamma_{tt} + \dots$ tangential to second order to $M'$ at $\pp$. Then $\gamma_t \in T_\pp\eta$.
\end{lem}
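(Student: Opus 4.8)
The plan is to expand a local defining equation of $M'$ along the $2$-jet of $\Gamma$ and to read off two of its low-order Taylor coefficients. Fix a $\cinfty$-smooth local defining function $\rho$ of $M'$ near $\pp$, normalized so that the complex Hessian $\bigl(\rho_{w_j\bar w_k}(\pp)\bigr)_{j,k}$, restricted to the holomorphic tangent space $T^{1,0}_\pp M'$, is positive semidefinite; this is possible since $M'$ is (uniformly) pseudoconvex, and by the computation carried out in the proof of \Cref{lem:r1} this restricted Hessian is exactly the scalar Levi form $\mathcal{L}_\Theta$ of $M'$ at $\pp$. Writing $\gamma_t = (\gamma_t^1,\dots,\gamma_t^{N'})$, set $\bar L := \sum_j \overline{\gamma_t^j}\,\tfrac{\partial}{\partial \bar w_j}\big|_\pp$. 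Under the canonical identification $T_\pp\C^{N'}\cong\C^{N'}$, the subspace $T_\pp\eta = T_\pp\eta_\pp$ is a complex linear subspace, and the assertion ``$\gamma_t\in T_\pp\eta$'' is equivalent to ``$\bar L\in T^{0,1}_\pp\eta$'', so it suffices to show the latter.

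Next I would consider the function $t\mapsto (\text{Taylor series of }\rho\text{ at }\pp)(\Gamma(t))$ and use that tangency to second order means that all of its Taylor coefficients of bidegree $(a,b)$ with $a+b\le 2$ (in the variables $t,\bar t$) vanish. The coefficient of bidegree $(1,0)$ is $\rho_w(\pp)\cdot\gamma_t$; its vanishing says precisely that $\gamma_t\in T^{1,0}_\pp M'$, i.e. that $\bar L$ is a genuine CR vector of $M'$ at $\pp$. The coefficient of bidegree $(1,1)$ is $\sum_{j,k}\rho_{w_j\bar w_k}(\pp)\,\gamma_t^j\,\overline{\gamma_t^k} = \mathcal{L}_\Theta(\bar L,\bar L)$, and its vanishing says that $\bar L$ is a null vector of the Levi form. (Here one only needs the $2$-jet of $\Gamma$, so the fact that $\Gamma$ is a formal, not convergent, curve is irrelevant; one works throughout with the bigraded Taylor expansion because $\rho$ is merely smooth.)

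To conclude, since $\mathcal{L}_\Theta$ is positive semidefinite on $T^{1,0}_\pp M'$ and $\bar L$ corresponds to a vector of $T^{1,0}_\pp M'$ by the previous step, the Cauchy--Schwarz inequality for positive semidefinite Hermitian forms gives $\mathcal{L}_\Theta(\bar L,\bar\Gamma) = 0$ for every CR vector $\bar\Gamma$ of $M'$ at $\pp$; that is, $\bar L$ lies in the Levi null space $\mathcal{N}_\pp$. Because $M'$ is uniformly pseudoconvex its Levi form has constant rank, so \Cref{thm:levifoliation} applies and yields $\mathcal{N}_\pp = T^{0,1}_\pp\eta$. Hence $\bar L\in T^{0,1}_\pp\eta$, and therefore $\gamma_t\in T_\pp\eta$, as claimed. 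The argument is essentially routine; the only points needing care are the bookkeeping of the mixed Taylor coefficients of $\rho\circ\Gamma$, the correct sign normalization of $\rho$ so that pseudoconvexity delivers a positive (rather than negative) semidefinite Hessian, and the passage from ``null vector'' to ``kernel element'' via Cauchy--Schwarz, so I do not anticipate a genuine obstacle: the lemma is really the classical statement that a null direction of a positive semidefinite Levi form is tangent to the Levi foliation.
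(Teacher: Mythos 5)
Your proposal is correct and follows essentially the same route as the paper: truncate $\Gamma$ to its $2$-jet, observe that the bidegree $(1,1)$ Taylor coefficient of $\rho\circ\Gamma$ equals $\mathcal{L}_\Theta$ evaluated on the $(0,1)$-vector determined by $\gamma_t$, and conclude from positive semidefiniteness and \Cref{thm:levifoliation} that this vector lies in the Levi null space $T^{0,1}_{\pp}\eta$. The only differences are that you spell out two steps the paper leaves implicit, namely that the $(1,0)$ coefficient forces $\gamma_t\in T^{1,0}_{\pp}M'$ and that Cauchy--Schwarz upgrades ``isotropic vector'' to ``kernel element''.
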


\begin{proof}A formal complex curve $\Gamma = \pp + t\gamma_t + \frac{t^2}{2}\gamma_{tt} + \dots$ is tangential to second order to $M'$ if and only if the curve $\tilde \gamma(t) = \pp + t\gamma_t + \frac{t^2}{2}\gamma_{tt}$ arising from the truncated power series is. Choosing a positive semidefinite scalar Levi form $\mathcal{L}_\Theta$ arising from a defining function $\rho$, we have that $\mathcal{L}_\Theta(\frac{1}{2}(\gamma_t + iJ\gamma_t), \frac{1}{2}(\gamma_t + iJ\gamma_t)) = \frac{d^2}{dt d\bar t}|_{t=0} \rho \circ \tilde\gamma = 0$, since $\rho \circ \tilde \gamma$ vanishes to second order. But by \Cref{thm:levifoliation}, the null space of $\mathcal{L}_\Theta|_\pp$ is given by $T^{0,1}_\pp\eta$, implying that $\gamma_t \in T_\pp \eta$.
\end{proof}

Our main technical tool will be a tensorial quantity measuring obstructions to the existence of CR sections of $T\eta$. We denote by $T^\bot\eta := \bigcup_{p\in M'} (T_p\eta_p)^\bot$ the bundle of orthogonal complements in $T\C^{N'}$ of tangent spaces to leaves.

\begin{lem}
\label{lem:tensor}
Let $M'$ be a manifold endowed with a foliation $\eta$ by complex manifolds. There exists a tensor field $R \in \mathcal{V}(M')^\ast \otimes \Gamma(T\eta)^\ast \otimes \Gamma(T^\bot\eta)$ such that for every $\Bar L \in \mathcal{V}(M')$ and $\psi \in \Gamma(T\eta)$, we have $\mathbb{P}_{T^\bot_p\eta_p}(\Bar L|_{p} \psi ) = R_p(\Bar L |_p, \psi |_p)$. For any $V_p \in T_p\eta_p$, the kernel of $R_p(\cdot,V_p)$ contains $T^{0,1}_p\eta_p$.
\end{lem}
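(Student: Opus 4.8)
The statement has two parts: (1) the map $(\bar L, \psi) \mapsto \mathbb{P}_{T^\bot\eta}(\bar L\,\psi)$ is tensorial in both arguments, hence arises from a tensor field $R$; and (2) for fixed $V_p \in T_p\eta$, the antilinear map $\bar L_p \mapsto R_p(\bar L_p, V_p)$ kills $T^{0,1}_p\eta$. The natural approach is the standard ``check $C^\infty$-linearity in each slot'' argument, which reduces the claim to verifying that $\mathbb{P}_{T^\bot\eta}(\bar L\,\psi)$ vanishes at $p$ whenever either $\bar L$ or $\psi$ vanishes at $p$.

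For the $\psi$-slot: since $\bar L$ is a derivation, $\bar L(f\psi) = (\bar L f)\psi + f(\bar L\psi)$ for a smooth function $f$, so evaluating at $p$ gives $(\bar L|_p f)\psi_p + f(p)(\bar L|_p\psi)$; the first term lies in $T_p\eta$ (as $\psi_p \in T_p\eta$), hence is annihilated by $\mathbb{P}_{T^\bot_p\eta}$. This shows $\mathbb{P}_{T^\bot_p\eta}(\bar L|_p\psi)$ depends only on $\psi_p$, and linearity over $\C$ in $\psi$ is immediate. For the $\bar L$-slot: if $\bar L = f\bar L'$ then $\bar L\psi = f(\bar L'\psi)$, so $\mathbb{P}_{T^\bot_p\eta}(\bar L|_p\psi) = f(p)\,\mathbb{P}_{T^\bot_p\eta}(\bar L'|_p\psi)$; thus it depends only on $\bar L|_p$ and is $\C$-linear there. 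Together these give the well-defined bundle map $R$, which is a section of $\mathcal{V}(M')^\ast \otimes \Gamma(T\eta)^\ast \otimes \Gamma(T^\bot\eta)$. I should double-check that ``$\mathcal{V}(M')$'' is being used here to mean sections of $T^{0,1}M'$ (as defined in the excerpt), so that $R$ is antilinear rather than linear in the first slot, but the tensoriality argument is identical.

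For part (2), fix $V_p \in T_p\eta_p$ and take $\bar L_p \in T^{0,1}_p\eta_p$. Using tensoriality I may extend both: choose $V \in \Gamma(T\eta)$ with $V|_p = V_p$, and choose $\bar L \in \Gamma(T^{0,1}\eta)$ with $\bar L|_p = \bar L_p$ — this is possible precisely because $T^{0,1}\eta$ is the CR tangent bundle of a foliation by complex manifolds, hence a genuine smooth subbundle, and in fact an \emph{involutive} one: the leaves of $\eta$ are complex manifolds, so $T^{0,1}\eta$ is closed under Lie bracket and $\bar L V$, being a $\C$-linear combination of a bracket $[\bar L, \tfrac12(V - iJV)]$-type term and... — more carefully, write $V = \tfrac12(W + JW\cdot i)$ style decomposition in terms of a $(1,0)$ field tangent to the leaf; then $\bar L V$ at $p$ is the componentwise derivative, which I claim still lies in $T_p\eta_p \subset \C T_p\eta_p$. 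The cleanest way: work in a foliation chart where the leaves through a neighborhood of $p$ are affine complex planes $\{const\} \times \C^k$ (the definition of foliation gives coordinates flattening the leaves; one can further holomorphically straighten since the leaves are complex submanifolds, or just observe that $T\eta$ is spanned near $p$ by coordinate fields whose componentwise derivatives along any $T^{0,1}$-direction tangent to a leaf stay in the span). Then $\bar L V$ has all components lying in the span of $T\eta$, so $\mathbb{P}_{T^\bot_p\eta}(\bar L|_p V) = 0$, i.e. $R_p(\bar L_p, V_p) = 0$.

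The main obstacle is the last point: justifying that for $\bar L_p \in T^{0,1}_p\eta$ one can pick an extension staying tangent to the leaves whose componentwise derivative of a leaf-tangent field remains leaf-tangent to first order at $p$. This requires knowing that the leaves are not merely real submanifolds flattened by the foliation chart but genuinely complex submanifolds, so that their tangent bundles are complex-linearly spanned by fields whose derivatives in CR-leaf directions do not escape $T\eta$ at $p$; equivalently, it uses integrability of $T^{0,1}\eta$ together with the complex-submanifold structure. I would phrase this via a local holomorphic parametrization of a single leaf $\eta_p$ near $p$ (which exists since $\eta_p$ is a complex submanifold of $\C^{N'}$), pull the relevant fields back, and compute there — the derivative of the pushforward of a holomorphic coordinate field in an antiholomorphic direction is zero, placing $\bar L_p V$ in $T_p\eta_p$, and then projecting away. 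Everything else is the routine tensoriality bookkeeping above.
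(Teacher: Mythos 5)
Your proposal is correct and follows essentially the same route as the paper: tensoriality via the Leibniz rule (the $(\bar L f)\psi$ term being killed by the projection) and $C^1$-linearity of directional derivatives, then for the kernel statement choosing an extension of $V_p$ that is holomorphic along the leaf $\eta_p$ via a holomorphic parametrization, so that its antiholomorphic derivative in leaf directions vanishes at $p$. The hesitations in your middle paragraph resolve into exactly the paper's argument, so no gap remains.
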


\begin{proof}
Define $R(\Bar L, \psi) = \mathbb{P}_{T^\bot\eta}(\Bar L \psi )$. Evidently, $R$ is $C^1$-linear in the first slot, as directional derivatives always are. For two sections $V$ and $W$ of $T\eta$ and $f \in C^1(M)$, we have that $\Bar L|_{p} (V + fW) = \Bar L|_{p} V + f \Bar L|_{p} W + (\Bar L|_{p} f) W$. The last term is canceled by the projection onto $T^\bot_p\eta_p$, thus $R$ is also $C^1$-linear in the second slot, hence $R$ is a tensor.

Consider now $V_p \in T_p\eta_p$. We may construct a section $V \in \Gamma(T\eta)$ satisfying $V|_p = V_p$, which is holomorphic on $\eta_p$ and smooth on $M'$. First, we choose a holomorphic parametrization $\phi$ for $\eta_p$, extend $\phi^{-1}_\ast V_p$ to a constant vector field $\tilde V$ and note that $\phi_\ast \tilde V$ is holomorphic, since $D\phi$ has holomorphic components and $\tilde V$ is constant. To obtain a vector field, we then simply extend the result smoothly to $M'$.
But now, $\Bar L|_{p} V = 0$ if $\Bar L \in \Gamma(T^{0,1}\eta)$, since $V$ is holomorphic on $\eta_p$ and $\Bar L|_p$ only takes derivatives along $\eta$. Therefore, $T_p^{0,1}\eta \subseteq \ker R_p(\cdot,V_p)$.
\end{proof}

Next we need to deal with the issue that the CR map $h$ of interest is neither assumed to be immersive nor $C^\infty$-smooth. After carefully verifying that nothing goes wrong, we will obtain the following result on obstructions to the existence of nowhere smooth CR maps.

\begin{prop}
\label{prop:pointwise}
Consider a uniformly pseudoconvex hypersurface $M'$ with its Levi foliation $\eta$, a CR manifold $M$ and a $C^1$-smooth CR map $h: M \rightarrow M'$ mapping a point $p\in M$ to $p' \in M'$. Suppose there exists a $C^1$-smooth CR family of formal complex curves $\left(\Gamma_q\right)_{q \in O}$ defined on a neighborhood $O \subseteq M$ of $p$ such that $\Gamma_q$ is tangential to second order to $M'$ at $h(q)$ for each $q\in O$. Then $\gamma_t(p) \in T_\pp\eta_\pp$, and $h_\ast T^{0,1}_pM \subseteq \ker R_\pp(\cdot,\gamma_t(p))$.
\end{prop}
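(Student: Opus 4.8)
The plan is to handle the two assertions separately. The first, $\gamma_t(p)\in T_\pp\eta_\pp$, will be a pointwise application of \Cref{lem:fiberdirection}. The second, $h_\ast T^{0,1}_pM\subseteq\ker R_\pp(\cdot,\gamma_t(p))$, will essentially reduce to the observation that $\gamma_t$, being a coefficient of a CR family, is a CR map and hence annihilated by $T^{0,1}M$; the only genuine work is to translate this into a statement about the tensor $R$ without assuming $h$ immersive or $\cinfty$.

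For the first assertion: for every $q\in O$ the second-order truncation $h(q)+t\gamma_t(q)+\tfrac{t^2}{2}\gamma_{tt}(q)$ is a formal complex curve tangential to second order to $M'$ at $h(q)$, so \Cref{lem:fiberdirection}, applied at the base point $h(q)$, yields $\gamma_t(q)\in T_{h(q)}\eta$. Specializing to $q=p$ gives the claim; I would also retain the relation $\gamma_t(q)\in T_{h(q)}\eta$ for all $q\in O$, as it is needed below.

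For the second assertion, fix $\Bar L_p\in T^{0,1}_pM$. Since $h$ is CR, $h_\ast\Bar L_p\in T^{0,1}_\pp M'$, so $R_\pp(h_\ast\Bar L_p,\gamma_t(p))$ is defined by \Cref{lem:tensor}, and we must show it vanishes. Using $\gamma_t(p)\in T_\pp\eta$, pick a smooth local section $\psi$ of $T\eta$ near $\pp$ with $\psi(\pp)=\gamma_t(p)$; by tensoriality of $R$ and the chain rule,
\[
R_\pp\!\left(h_\ast\Bar L_p,\gamma_t(p)\right)=\mathbb{P}_{T^\bot_\pp\eta}\!\left((h_\ast\Bar L_p)\psi\right)=\mathbb{P}_{T^\bot_\pp\eta}\!\left(\Bar L_p(\psi\circ h)\right),
\]
where $\Bar L_p(\psi\circ h)$ is the componentwise derivative of $\psi\circ h: O\rightarrow\C^{N'}$ along $M$. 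Now $\psi\circ h$ and $\gamma_t$ are two $C^1$ maps into $\C^{N'}$ which, at each $q$ near $p$, take their value inside $T_{h(q)}\eta$ (for $\psi\circ h$ because $\psi$ is a section of $T\eta$, for $\gamma_t$ by the first assertion) and which agree at $p$. Hence $\delta:=\gamma_t-\psi\circ h$ vanishes at $p$ and is valued in $T\eta$ along $h$; expanding $\delta=\sum_j f_j\,(e_j\circ h)$ in a smooth local frame $(e_j)$ of $T\eta$ near $\pp$, the coefficients $f_j$ are $C^1$ with $f_j(p)=0$, so the product rule gives $\Bar L_p\delta=\sum_j(\Bar L_p f_j)\,e_j(\pp)\in T_\pp\eta$. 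Since $\gamma_t$ is a CR map, $\Bar L_p\gamma_t=0$, whence $\Bar L_p(\psi\circ h)=-\Bar L_p\delta\in T_\pp\eta$ and the displayed projection is zero. As $\Bar L_p$ was arbitrary, this proves the inclusion.

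The step I expect to require the most care is the reduction just sketched: because $h$ need not be an immersion (nor $\cinfty$), $\gamma_t$ is not literally a pullback $\psi\circ h$ of a genuine section of $T\eta$ living on $M'$, and one must verify that replacing it by such a $\psi$ does not affect the projection onto $T^\bot_\pp\eta$ — which is exactly the purpose of the auxiliary vector $\delta$, exploiting that the product-rule correction term is the only survivor of the projection, precisely as in the proof of \Cref{lem:tensor}. A cosmetically cleaner alternative would be to first extend $R$ to act on $C^1$ sections of $T\eta$ \emph{along $h$} via the same formula $\mathbb{P}_{T^\bot\eta}(\Bar L\,\cdot\,)$, check well-definedness by the identical tensoriality computation, and then simply note $R_\pp(h_\ast\Bar L_p,\gamma_t(p))=\mathbb{P}_{T^\bot_\pp\eta}(\Bar L_p\gamma_t)=0$ because $\gamma_t$ is CR.
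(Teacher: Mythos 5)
Your proof is correct. The first assertion is handled exactly as in the paper (pointwise application of \Cref{lem:fiberdirection} at every $q\in O$), but for the second assertion you take a genuinely different technical route. The paper's proof assumes $h_\ast \Bar L|_p\neq 0$, picks a two-dimensional real submanifold $S\ni p$ to which $\Bar L|_p$ is tangent, notes that $h|_S$ is then a local embedding, and pushes $\gamma_t$ forward via $h|_S^{-1}$ to obtain a genuine section $\tilde\gamma_t\in\Gamma(T\eta)$ with $\tilde\gamma_t\circ h=\gamma_t$ on $S$; the identity $R_\pp(h_\ast\Bar L|_p,\gamma_t(p))=\mathbb{P}_{T^\bot_\pp\eta}(\Bar L|_p\gamma_t)=0$ then follows because $\Bar L|_p$ only differentiates along $S$. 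You instead take an \emph{arbitrary} smooth section $\psi$ with $\psi(\pp)=\gamma_t(p)$ and control the discrepancy $\delta=\gamma_t-\psi\circ h$ directly: since $\delta$ is $T\eta$-valued along $h$ and vanishes at $p$, the product rule in a local frame shows $\Bar L_p\delta\in T_\pp\eta$, which the projection annihilates. Both arguments ultimately rest on the same three facts (tensoriality of $R$, $\gamma_t(q)\in T_{h(q)}\eta$, and $\Bar L\gamma_t=0$ because $\gamma_t$ is a coefficient of a CR family), but your version buys a uniform treatment of the non-immersive case --- no case split on whether $h_\ast\Bar L|_p=0$ (which the paper leaves implicit, though it is trivial by linearity of $R$ in the first slot) and no local-embedding or extension construction. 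Your closing remark, that one could instead extend $R$ to sections of $T\eta$ along $h$, is also a perfectly viable packaging of the same computation.
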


\begin{proof}
By Lemma \ref{lem:fiberdirection}, we know that at each point $q \in O$, $\gamma_t(q) \in T_{h(q)}\eta_{h(q)}$, since $\Gamma_q$ is a formal complex curve tangential to second order to $M'$ at $h(q)$.

Consider now $\Bar L \in \mathcal{V}(M)$ such that $h_\ast \Bar L|_p \neq 0$. Choosing a two-dimensional real submanifold $S \subseteq O$ such that $\Bar L|_p$ is tangential to $S$, the derivative of $h|_S$ has full rank at $p$, and hence $h|_S$ is a local embedding around $p$. We may thus extend $\gamma_t \circ h^{-1}|_{h(S)}$, defined on $h(S)$, to a section $\tilde \gamma_t \in \Gamma(T\eta)$ defined on an open neighbourhood of $\pp$.
Since $\gamma_t$ and $\tilde \gamma_t \circ h$ agree on $S$ and $\Bar L_p$ only takes derivatives along $S$, it follows that
\begin{align*}
    R_\pp(h_\ast \Bar L|_p, \gamma_t(p)) = \mathbb{P}_{T_\pp^\bot \eta_\pp}\left( h_\ast \Bar L|_p \tilde \gamma_t \right) =
    \mathbb{P}_{T_\pp^\bot \eta_\pp}\left( \Bar L|_p (\tilde \gamma_t \circ h) \right) =
    \mathbb{P}_{T_\pp^\bot \eta_\pp}\left( \Bar L|_p \gamma_t \right) = 0,
\end{align*}
implying that $h_\ast \Bar L|_p \in \ker R_\pp(\cdot,\gamma_t(p))$.
\end{proof}

In order to apply \Cref{thm:main} to our situation, we are going to use the numerical quantity $\nu$ already mentioned in the introduction, which measures the size of $\ker R(\cdot, V)$, as well as a method of computing it.

\begin{lem}
\label{def:nu}
Let $M'$ be a uniformly pseudoconvex hypersurface with Levi foliation $\eta$. For $p' \in M'$, we define 
\[\nu_\pp = \max_{0 \neq V \in T_\pp \eta} \dim_\C \ker R_\pp(\cdot, V) - \dim_\C \eta.\]
Then $\nu$ is a biholomorphic invariant, and 
the function $q \rightarrow \nu_q$ is nonnegative, integer valued and upper semicontinous. 
\end{lem}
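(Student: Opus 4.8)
The plan is to dispose of the three assertions in turn, after one reduction. Since the complex structure $J$ of $\C^{N'}$ is constant and both $T_p\eta_p$ and $T_p^\bot\eta_p$ are $J$-invariant, the derivation $\Bar L|_p$ commutes with multiplication by $J$ componentwise and the projection $\mathbb{P}_{T_p^\bot\eta_p}$ is $\C$-linear, so $R_p$ is $\C$-linear in its second argument; consequently $\ker R_p(\cdot,\lambda V) = \ker R_p(\cdot, V)$ for $\lambda \in \C\setminus\{0\}$, and $g(p,[V]) := \dim_\C \ker R_p(\cdot, V)$ is a well-defined integer-valued function on the projectivized bundle $\mathbb{P}(T\eta) \to M'$, a smooth fiber bundle with compact fibers $\mathbb{P}^{\dim_\C\eta-1}$ (we assume the Levi foliation is nontrivial; if $\dim_\C\eta=0$ we set $\nu\equiv 0$). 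With this notation $\nu_q + \dim_\C\eta = \max_{[V]\in\mathbb{P}(T_q\eta)} g(q,[V])$, a maximum over a compact set, so $\nu$ is integer valued and finite. Nonnegativity is immediate from \Cref{lem:tensor}: $T_p^{0,1}\eta_p \subseteq \ker R_p(\cdot, V)$ for every $0\neq V\in T_p\eta_p$ and $\dim_\C T_p^{0,1}\eta_p = \dim_\C\eta$, so $g(p,[V])\geq \dim_\C\eta$.

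For upper semicontinuity I would first observe that $g$ is upper semicontinuous on $\mathbb{P}(T\eta)$: writing $\dim_\C\ker R_p(\cdot, V) = \dim_{CR}M' - \operatorname{rk}_\C R_p(\cdot, V)$ and recalling that $R$ is a smooth, hence continuous, tensor field, the rank is lower semicontinuous because ``$\operatorname{rk}\geq k$'' is an open condition. It then suffices to show that the fiberwise maximum of a bounded, upper semicontinuous function over a locally trivial bundle with compact fibers is upper semicontinuous on the base. If this failed at $p$, there would be a sequence $q_n\to p$ and points $(q_n,[V_n])$ realizing $\max_{\mathbb{P}(T_{q_n}\eta)}g \geq c > \max_{\mathbb{P}(T_p\eta)}g$ (the maxima being attained since $g$ is upper semicontinuous and the fibers compact); choosing a local trivialization of $\mathbb{P}(T\eta)$ near $p$ and using compactness of the fibers, we may pass to a subsequence with $[V_n]\to[V]\in\mathbb{P}(T_p\eta)$, whereupon upper semicontinuity of $g$ gives $g(p,[V])\geq \limsup_n g(q_n,[V_n])\geq c > g(p,[V])$, a contradiction. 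Subtracting the constant $\dim_\C\eta$ shows $\nu$ is upper semicontinuous.

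For biholomorphic invariance, let $\Phi$ be a biholomorphism from a neighborhood of $p$ in $\C^{N'}$ onto a neighborhood of $\Phi(p)$ carrying $M'$ onto a hypersurface $\tilde M'$; since $\Phi$ preserves pseudoconvexity and the Levi null distribution is a CR invariant, $\tilde M'$ is uniformly pseudoconvex and $\Phi$ carries $\eta$ onto the Levi foliation $\tilde\eta$ of $\tilde M'$, with $D\Phi(q)$ restricting to a $\C$-linear isomorphism $T_q\eta_q\to T_{\Phi(q)}\tilde\eta_{\Phi(q)}$. Given $\psi\in\Gamma(T\eta)$ with $\psi(p)=V$, its pushforward $\tilde\psi := \Phi_\ast\psi$ is a section of $T\tilde\eta$ with $\tilde\psi(\Phi(p)) = D\Phi(p)V$; viewing vector fields as $\C^{N'}$-valued maps we have $\tilde\psi\circ\Phi = D\Phi\cdot\psi$, so for a CR field $\Bar L$ on $M'$ and its pushforward $\widetilde{\Bar L} = \Phi_\ast\Bar L$ the product rule gives $(\widetilde{\Bar L}\,\tilde\psi)\circ\Phi = \Bar L(\tilde\psi\circ\Phi) = (\Bar L\, D\Phi)\psi + D\Phi(\Bar L\psi) = D\Phi(\Bar L\psi)$, because $D\Phi$ has holomorphic entries and $\Bar L\in T^{0,1}$ annihilates them. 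Hence $\Bar L|_p\psi \in T_p\eta_p$ if and only if $\widetilde{\Bar L}|_{\Phi(p)}\tilde\psi \in T_{\Phi(p)}\tilde\eta_{\Phi(p)}$, so $D\Phi(p)$ carries $\ker R_p(\cdot, V)$ onto $\ker \tilde R_{\Phi(p)}(\cdot, D\Phi(p)V)$; therefore $g(p,[V]) = \tilde g(\Phi(p),[D\Phi(p)V])$, and taking the maximum over $[V]$ and using $\dim_\C\eta = \dim_\C\tilde\eta$ yields $\nu_p = \tilde\nu_{\Phi(p)}$.

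I expect the upper semicontinuity to be the step needing the most care: the maximum defining $\nu$ ranges over a noncompact set of directions, so one must first pass to the compact projective bundle $\mathbb{P}(T\eta)$ and then use the (otherwise false) fact that fiberwise maxima of upper semicontinuous functions over compact-fibered bundles remain upper semicontinuous. The biholomorphic invariance, by contrast, hinges only on the elementary observation that an antiholomorphic derivation kills $D\Phi$, and integrality and nonnegativity are formal.
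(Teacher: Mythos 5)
Your proposal is correct and follows essentially the same route as the paper: biholomorphic invariance via the fact that an antiholomorphic derivation annihilates the holomorphic Jacobian, nonnegativity from $T^{0,1}_p\eta\subseteq\ker R_p(\cdot,V)$, and upper semicontinuity by combining homogeneity of $R$ in $V$ (reducing to a compact set of directions), lower semicontinuity of the rank of the continuous tensor $R$, and compactness. The only cosmetic difference is that the paper realizes the compactness argument via nonvanishing of a sum of squared minors over the unit sphere $\{p'\}\times\mathbb{S}^{2K-1}$, whereas you work on the projectivized bundle $\mathbb{P}(T\eta)$ with a sequential fiberwise-maximum argument.
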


\begin{proof} For biholomorphism invariance, let $H:\C^{N'} \rightarrow \C^{N'}$ be a local biholomorphism near $p'$, and consider $H(M')$. Its Levi foliation is $H(\eta)$. For all $\Bar L \in \mathcal V(M')$ and $V \in \Gamma(T\eta)$, we have $(H_\ast \Bar L)(H_\ast V) = H_\ast (\Bar L V)$ as the Jacobian of $H$ is holomorphic again and thus commutes with taking antiholomorphic derivatives. Furthermore, projecting $H_\ast(T\eta^\bot)$ onto $TH(\eta)^\bot$ is an isomorphism as $H_\ast(T\eta^\bot)$ is another complement to $H_\ast(T\eta) = TH(\eta)$. Therefore $R$ transforms under biholomorphism via pre- and postcomposition with linear isomorphisms, so $\nu$ is invariant. Here we chose $T\eta^\bot$ instead of the more natural, isomorphic bundle $T \C^{N'}/ T\eta$ because it simplifies later calculations.

Clearly $\nu_q$ is integer-valued, and since $T_q^{0,1}\eta \subseteq \ker R(\cdot, V)$, it is nonnegative. Let $\dim_\C \eta =: K$. To show upper semicontinuity, we need to show that for all $\pp$ in $M'$ and $l \in \mathbb{N}$, $\nu_\pp \leq l$ implies $\nu_q \leq l$ for all $q$ in a neighborhood of $\pp$. We observe first that for $l \in \mathbb{N}$, $\dim_\C \ker R_\pp(\cdot, V) \leq K + l$ if and only if $\mathrm{rk} R(\cdot, V) \geq N'-1-K-l$. This condition is equivalent to some $(N'{-}1{-}K{-}l)$-minor of the matrix representation of $R(\cdot, V)$ with respect to a choice of smooth local frames of $T^{0,1}M'$ and $T^\bot\eta$ being nonzero. By homogeneity, we have $\nu_\pp \leq l$ if and only if for each $V \in \mathbb{S}^{2K-1} \subseteq T_\pp$, a (possibly different) such minor of $R(\cdot,V)$ does not vanish. Hence, $\nu_\pp \leq K+l$ if and only if the square sum $\sum_{j}|m_j|^2$ over all such minors does not vanish on $\{\pp\}{\times}\mathbb{S}^{2K-1}$. By compactness of the sphere, there is a neighborhood $O$ of $\pp$ such that $\sum_{j}|m_j|^2$ does not vanish on $O{\times}\mathbb{S}^{2K-1}$, showing that $\nu_q \leq l$ on $O$, which implies upper semicontinuity.
\end{proof}

To calculate $\nu_\pp$, the following setup will be helpful.
\begin{lem}
\label{lem:setup}
Let $M'$ be a pseudoconvex hypersurface foliated by complex manifolds of dimension $K$. Consider a point $\pp \in M'$ and an $(N'{-}K)$-dimensional complex manifold $\Sigma$ through $\pp$ such that $T_\pp\eta \oplus T_\pp \Sigma = T_\pp \C^{N}$. If $S:=M' \cap \Sigma$ is strongly pseudoconvex, then the Levi form of $M'$ has exactly $K$ zero eigenvalues on an open neighborhood of $\pp$, making $M'$ a uniformly pseudoconvex hypersurface and $\eta$ its Levi foliation.

Furthermore, as in Lemma \ref{lem:tensor}, the map $R^S \in \mathcal{V}(S)^\ast\otimes \Gamma(T\eta|_S)^\ast \otimes \Gamma(T^{\bot}\eta|_S)$ given by $R^S(\Bar L,V) = \mathbb{P}_{T^\bot\eta}(\Bar L V)$ is a tensor, and $\max_{0 \neq V \in T_\pp \eta} \dim_{\C} \ker R_\pp^S(\cdot , V) = \nu_\pp$.
\end{lem}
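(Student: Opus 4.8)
The statement has two essentially separate halves, which I would address in turn: first, that the transversal slice hypothesis forces $\mathcal L_{M'}$ to have exactly $K$ zero eigenvalues near $\pp$, so that $M'$ is a uniformly pseudoconvex hypersurface with Levi foliation $\eta$; and second, the tensoriality of $R^S$ together with the identity $\max_{0\neq V\in T_\pp\eta}\dim_\C\ker R^S_\pp(\cdot,V)=\nu_\pp$.

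For the first half I would begin with two elementary observations. Since each leaf $\eta_q$ is a complex submanifold of $M'$, we have $T_\pp\eta\subseteq T_\pp M'$, so the hypothesis $T_\pp\eta\oplus T_\pp\Sigma=T_\pp\C^{N'}$ already gives $T_\pp M'+T_\pp\Sigma=T_\pp\C^{N'}$; thus $\Sigma$ is transverse to $M'$ at $\pp$, $S=M'\cap\Sigma$ is a smooth real hypersurface of $\Sigma$ near $\pp$, and a dimension count using $T_\pp\eta\cap T_\pp\Sigma=0$ yields the splitting $T^{0,1}_\pp M'=T^{0,1}_\pp\eta\oplus T^{0,1}_\pp S$ with $\dim_\C T^{0,1}_\pp S=N'-1-K$. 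Secondly, because every defining function of $M'$ vanishes identically on each leaf $\eta_q\subseteq M'$, the scalar Levi form of $M'$ vanishes on $T^{0,1}_q\eta_q$, and positive semidefiniteness upgrades this to $T^{0,1}_q\eta_q\subseteq\ker\mathcal L_{M'}|_q$; hence $\mathcal L_{M'}$ has at least $K$ zero eigenvalues at every point near $\pp$.

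The crucial use of the hypothesis is that strong pseudoconvexity of $S$ makes $\mathcal L_{M'}|_\pp$ nondegenerate, indeed definite, on $T^{0,1}_\pp S$: since $S\subseteq M'$ and the bracket of two vector fields tangent to $S$ stays tangent to $S$ (hence to $\Sigma$), the inclusion-induced map of one-dimensional characteristic quotients $\C T_\pp S/(T^{1,0}_\pp S\oplus T^{0,1}_\pp S)\to\C T_\pp M'/(T^{1,0}_\pp M'\oplus T^{0,1}_\pp M')$ — an isomorphism, because $T_\pp S$ is odd-dimensional and so cannot lie inside the $J$-invariant space $T^c_\pp M'$ — identifies $\mathcal L_{M'}|_{T^{0,1}_\pp S}$ with the intrinsic Levi form of $S$ in $\Sigma$, which is definite. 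Then a Cauchy–Schwarz argument for the positive semidefinite form $\mathcal L_{M'}|_\pp$ closes the point: its kernel contains $T^{0,1}_\pp\eta$, and if $v=a+b\in\ker\mathcal L_{M'}|_\pp$ with $a\in T^{0,1}_\pp\eta$, $b\in T^{0,1}_\pp S$, then $0=\mathcal L_{M'}(v,b)=\mathcal L_{M'}(b,b)$ forces $b=0$; so the kernel is exactly $T^{0,1}_\pp\eta$ and $\mathcal L_{M'}|_\pp$ has precisely $K$ zero and $N'-1-K$ positive eigenvalues. Combining the lower bound $K$ on the number of zero eigenvalues at all nearby points with lower semicontinuity of the number of positive eigenvalues (and the absence of negative ones) makes the rank of $\mathcal L_{M'}$ constantly $N'-1-K$ on a neighborhood of $\pp$; by \Cref{thm:levifoliation} the Levi foliation exists there, its null distribution is $T^{0,1}\eta$, so it coincides with $\eta$, and $M'$ is uniformly pseudoconvex by definition.

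For the second half, $R^S$ is a tensor by the verbatim argument of \Cref{lem:tensor}: $C^1$-linearity in $\Bar L$ is automatic for directional derivatives, and in the second slot the Leibniz term $(\Bar L|_q f)W|_q$ lies in $T_q\eta$ and is annihilated by $\mathbb{P}_{T^\bot\eta}$. For the numerical identity, fix $0\neq V\in T_\pp\eta$. A directional derivative in a direction $\Bar L\in T^{0,1}_\pp S\subseteq T^{0,1}_\pp M'$ only sees the restriction of a section of $T\eta$ to $S$, so (both $R$ and $R^S$ being tensors, the choice of extension being irrelevant) $R^S_\pp(\Bar L,V)=R_\pp(\Bar L,V)$ for all such $\Bar L$. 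By \Cref{lem:tensor}, $T^{0,1}_\pp\eta\subseteq\ker R_\pp(\cdot,V)$, so the splitting $T^{0,1}_\pp M'=T^{0,1}_\pp\eta\oplus T^{0,1}_\pp S$ gives $\ker R_\pp(\cdot,V)=T^{0,1}_\pp\eta\oplus\ker R^S_\pp(\cdot,V)$, whence $\dim_\C\ker R^S_\pp(\cdot,V)=\dim_\C\ker R_\pp(\cdot,V)-K$; taking the maximum over $0\neq V\in T_\pp\eta$ and recalling $\dim_\C\eta=K$ yields the claimed equality with $\nu_\pp$. The genuine work is the characteristic-quotient comparison in the third paragraph, which needs care with the transversality and parity input; everything else is a dimension count, a routine semicontinuity argument for constancy of the rank, or a repetition of \Cref{lem:tensor}.
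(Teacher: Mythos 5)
Your proof is correct and follows essentially the same route as the paper's: positive definiteness of the ambient Levi form on $T^{0,1}_{\pp}S$ together with Levi-flatness of the leaf pins down the eigenvalue count, and the splitting $T^{0,1}_{\pp}M'=T^{0,1}_{\pp}\eta\oplus T^{0,1}_{\pp}S$ combined with $T^{0,1}_{\pp}\eta\subseteq\ker R_{\pp}(\cdot,V)$ gives $\ker R_{\pp}(\cdot,V)=T^{0,1}_{\pp}\eta\oplus\ker R^S_{\pp}(\cdot,V)$, exactly as in the paper (which simply asserts the positive definiteness of $\mathcal L_\Theta|_{T^{0,1}_{\pp}S}$ that you take the trouble to justify). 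The one imprecise point is your parity justification for the characteristic-quotient isomorphism: an odd-dimensional real subspace can in general sit inside a $J$-invariant one, and the correct argument is that $T_{\pp}S\subseteq T^c_{\pp}M'$ would force $T_{\pp}S=T_{\pp}\Sigma\cap T^c_{\pp}M'$, an intersection of two $J$-invariant spaces and hence even-dimensional, contradicting $\dim_{\R}T_{\pp}S=2(N'-K)-1$.
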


\begin{proof}
Let $\Theta$ be a characteristic form on $M'$ such that the respective scalar Levi form $\mathcal{L}_\Theta(\Bar L_1, \Bar L_2) = \frac{1}{2i}\Theta([L_1,\Bar L_2])$ is positive semidefinite. If $S$ is strongly pseudoconvex at $\pp$, then $\mathcal{L}_\Theta|_{T^{0,1}_\pp S}$ is strictly positive definite, hence, by elementary linear algebra, $\mathcal{L}_\Theta$ has at least $N'-1-K$ positive eigenvalues and as the $K$-dimensional leaf $\eta_\pp$ through $\pp$ is a complex manifold and therefore Levi-flat, the other $K$ eigenvalues have to be zero.
Consider now $R_\pp(\Bar L,V)$ for $\Bar L \in \mathcal{V}(M)$ and $V \in T_\pp\eta_\pp$. Decompose $\Bar L|_\pp = U + W$ for $U \in T^{0,1}_\pp S_\pp$ and $W \in T^{0,1}_\pp \eta_\pp$. As proven in Lemma \ref{lem:tensor}, $R_\pp(W,V) = 0$, hence $R_\pp(\Bar L,V) = 0$ iff $R_\pp(U,V) = R^S_\pp(U,V) = 0$. This implies that $\ker R_\pp(\cdot,V) = \ker R^S_\pp(\cdot, V) \oplus T^{0,1}_\pp \eta_\pp$, proving the second claim.
\end{proof}

\begin{figure}[h]
    \centering
    \includegraphics[width=0.4\textwidth]{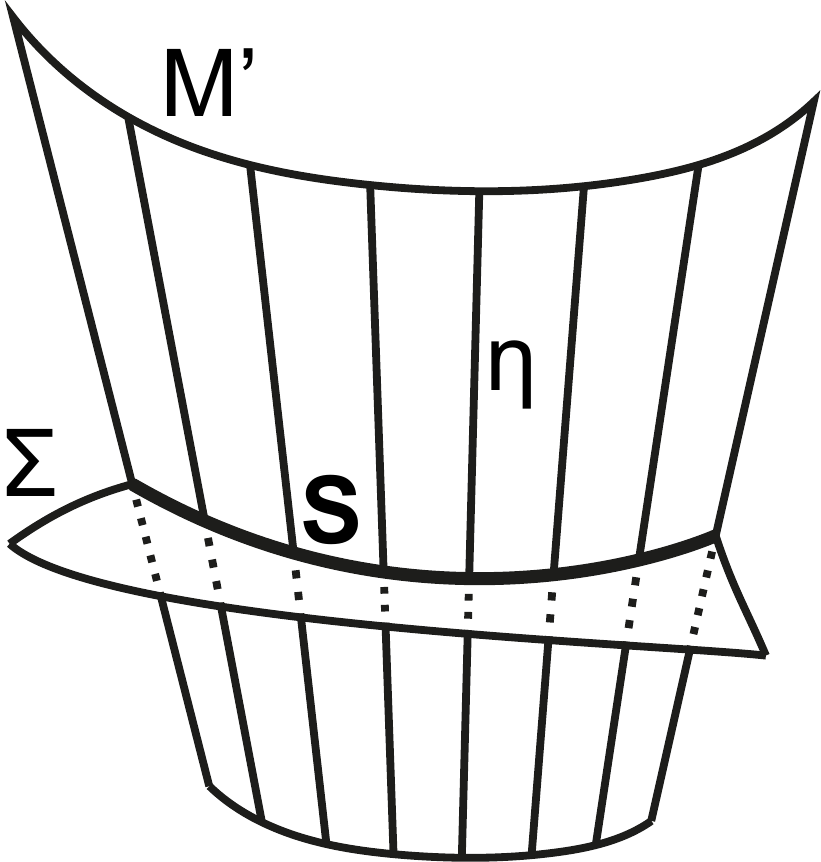}
    \caption{The setup from \Cref{lem:setup}.}
    \label{fig:my_label}
\end{figure}

\subsection{The case $\nu = 0$} If the kernel of $R$ is of minimal dimension even at a single point, Proposition \ref{prop:pointwise} implies \Cref{cor:always}, fully generalizing the result on the tube over the light cone; before we 
give the proof, we first show that the tube over the light cone satisfies
$\nu=0$.

\begin{exa}
\label{exa:lightcone}
Let $M' \subseteq \C^{N'}$ be the tube over the light cone. It is foliated by complex lines, at any point $p' \in M'$ with $\Re(\pp) \neq 0$ the Levi form of $M'$ has exactly one zero eigenvalue, and $\nu_\pp = 0$.
\end{exa}

\begin{proof}
Recall that the tube over the light cone is defined as the set of points $z \in \mathbb{C}^{N'}$ such that $\Re(z_1)^2 + \dots \Re(z_{N'-1})^2 = \Re(z_{N'})^2$. It is a smooth real hypersurface where $\Re(z_{N'}) \neq 0$, and foliated by complex lines $q + t\left( \Re(q_1),\dots,\Re(q_{N'-1}),\Re(q_{N'})\right)$, $q \in M'$. Indeed, let us check that
\begin{align*}
    &\Re\left(q_1 + t\Re(q_1)\right)^2 + \dots + \Re\left(q_{N'-1} + t\Re(q_{N'-1})\right)^2 \\ &= 
    (1+\Re(t))^2 \left( \Re(q_1)^2 +\dots + \Re(q_{N'-1})^2 \right) \\ &=
    (1+\Re(t))^2 \Re(q_{N'})^2 = \Re\left(q_{N'} + t\Re(q_{N'})\right)^2.
\end{align*}
The hypersurface $M'$ is pseudoconvex, since the tube over the interior of the light cone is convex.
The hypersurface $\Sigma = \{z \in \C^{N'}: z_{N'} = p'_{N'}\}$ through $\pp \in M'$ is transversal to $\eta_\pp$ and intersects $M'$ in $S=\{z \in \C^{N'}: z_{N'} = p'_{N'}, \Re(z_1)^2 + \dots + \Re(z_{N'-1})^2 = \Re(p'_{N'})^2\}$, which is a strongly pseudoconvex CR submanifold of $\Sigma$ because it is a tube over a strongly convex real manifold.
To obtain the setup of Lemma \ref{lem:setup}, it now suffices to calculate $R^S_\pp(\Bar L|_\pp,V|_\pp)$ for a single section $V$ of $T\eta$ (since $T\eta$ is one-dimensional).
Take $V(q) = \left( \Re(q_1),\dots,\Re(q_{N'-1}),\Re(q_{N'})\right)$ for $q \in S$, and consider a CR vector $\Bar L|_\pp \in T^{0,1}_\pp S$. Since $\Bar L|_\pp \Re(q_{N'}) = 0$, $L|_\pp V \in T_\pp\Sigma$, and because $T_\pp\Sigma$ and $T_\pp\eta$ lie in general position, $R_{p'}^S(\Bar L|_\pp,V|_\pp) = 0$ if and only if $\Bar L|_\pp V = 0$, i.e. $\Bar L|_\pp (\Re(q_j)) = 0$ for $j = 1,\dots,N'$. But since $\Bar L_\pp q_j = 0$, this is the case if and only if $\Bar L_\pp \Bar q_j = 0$ as well, hence $\Bar L|_\pp \in T_\pp^{0,1}S \cap T_\pp^{1,0}S = \{0\}$, which proves that $\nu_\pp = 0$.
\end{proof}


\begin{proof}[Proof of \Cref{cor:always}]
If $h$ is not generically smooth, there exists an open set $U_0 \subseteq M$ where $h$ is nowhere smooth. \Cref{lem:r0always} always yields $r_0 \geq 1$, thus we may apply \Cref{thm:main}, with $k = 0$ and $l = 1$, to obtain a point $p \in M$, a neighborhood $U \subseteq U_0$ of $p$ and a continuously differentiable CR family of formal complex curves $(\Gamma_\xi)_{\xi \in U}$ such that $\Gamma_\xi$ is tangential to $M'$ to infinite order at $h(\xi)$. For any $q \in U$ we have $\nu_{h(q)} = 0$, i.e. $\ker R(\cdot,\gamma_t(q)) = T^{0,1}_{h(q)}\eta$, hence by \Cref{prop:pointwise} we find $h_\ast T^{0,1}_q M \subseteq T^{0,1}_{h(q)} \eta$.

Near points $q \in U$ where $h$ is regular enough, this means that $h^{-1}(\eta_{h(q)})$ integrates the complex tangent bundle and thus, by minimality of $M$, has to contain an open neighborhood of $q$. First, take a small open set $O' \subseteq h(U)$ where coordinates adapted to the foliation may be chosen, and hence the restricted foliation $\eta|_{O'}$ is equipped with a manifold structure. Let $\pi: O' \rightarrow \eta|_{O'}$ denote the projection onto the foliation, given by $\pi(q) = \eta_q\cap O'$. Since the rank of a continuously differentiable map is lower semicontinuous, we can find an open subset $\tilde U \subseteq h^{-1}(O')$ where $\pi \circ h$ is of constant rank. By the rank theorem, $h^{-1} \circ \pi^{-1}(\eta_{h(q)}) = h^{-1}(\eta_{h(q)}) =: E_q$ is a submanifold of $\tilde U$, and $V \in T_q M$ satisfies $(\pi\circ h)_\ast V = 0$ if and only if $V \in T_q E_q$. But as $h_\ast T_q^{0,1}M \subseteq T^{0,1}_{h(q)}\eta$, we infer that $T_q^{0,1}M \subseteq \C T_q E_q$ for any $q \in \tilde U$, which by minimality implies that $E_q$ is an open neighborhood of $q$ in $\tilde U$ already.

We have thus shown that an nonempty open subset of $U$ is mapped into a single leaf $\eta_{h(q)}$ of the foliation of $M'$. It is left to prove that actually, all of $M$ will be mapped into $\eta_{h(q)}$. Let $\mathcal{K}$ denote the closure of the set of all points $q \in M$ which possess an open neighborhood that is mapped entirely into $\eta_{h(q)}$. We will show that $\mathcal{K}$ is open. For $p \in \mathcal{K}$, take a neighborhood $O' \subseteq \C^{N'}$ of $h(p)$ where \emph{the connected component} of $\eta_{h(q)} \cap O'$ containing $h(p)$ is given as the vanishing set of a holomorphic map $\pi: O'\rightarrow \C^{K}$. Then $\pi \circ h: h^{-1}(O') \rightarrow \C^K$ is a CR map, and since $p \in \mathcal{K}$, $\pi \circ h$ must vanish on open sets arbitrarily close to $p$. But as any CR function on a connected minimal submanifold which vanishes on an open subset already vanishes identically (a consequence e.g. of \cite[Theorem III.3.13]{berhanu-cordaro-hounie}), all of $h^{-1}(O')$ must be mapped entirely into $\eta_{h(q)}$ and thus $p$ lies in the interior of $\mathcal{K}$. Now, $\mathcal{K}$ is both open and closed, and thus connectedness of $M$ implies $\mathcal{K} = M$.
\end{proof}

\section{CR transversal maps and
proof of \Cref{cor:transversal}}
If we want to treat positive $\nu_\pp$, we have to assume more about the map and the source manifold; for example, if $M'=M\times \C$ for a 
strictly pseudoconvex $M\subset \C^N$, then $\nu = N-1$ everywhere, and our usual example 
$z \mapsto (z,\varphi(z))$ for some finitely smooth but nonsmooth CR function 
$\varphi$ will yield nonsmooth CR maps. 

The approach we take here is based on the fact that if  $h_\ast T_p^{0,1}M$ and $T_{p'}\eta_{p'}$ intersect trivially,  then we can allow $\nu_\pp$ to be greater than zero provided that $h_\ast T_p^{0,1}M$ has enough dimensions. In particular, this 
occurs if $M$ is a uniformly pseudoconvex hypersurface with sufficiently many positive Levi eigenvalues, and $h$ satisfies a commonly considered nondegeneracy condition, that of \emph{CR-transversality}.

\begin{defi}[CR-transversality]
A CR map $h:M \rightarrow M'$ between hypersurfaces $M$ and $M'$ is called CR-transversal at $p\in M$ if
$T_{h(p)}^{0,1}M' + T_{h(p)}^{1,0}M' + h_\ast \C T_pM = \C T_{h(p)}M'$.
\end{defi}

If $M$ is actually strongly pseudoconvex and $h:M\rightarrow M'$ is CR-transversal, then $h_\ast T_p^{0,1}M$ has maximal dimensions and intersects $T^{0,1}_\pp \eta_\pp$ trivially. This is very well known and a key component in the proof of many regularity results (see e.g. \cite{regularitymx}. We summarize for later use the following statement:

\begin{lem}
\label{lem:crtransversal}
Consider a pseudoconvex hypersurface $M'$, a \emph{strongly pseudoconvex} hypersurface $M$ and a $C^2$-smooth \emph{CR-transversal} CR map $h: M \rightarrow M'$ mapping $p\in M$ to $p' \in M'$. Then $h$ is an immersion, $\dim h_\ast T_p^{0,1}M = \dim_{CR} M$, and $h_\ast T_p^{0,1}M \cap \mathcal{N}_\pp = \{0\}$, where $\mathcal{N}_\pp \subseteq T_\pp^{0,1}M'$ denotes the null space of the Levi form of $M'$ at $\pp$.
\end{lem}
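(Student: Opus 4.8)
The plan is to verify the three conclusions in order, using a scalar Levi form of $M'$ pulled back along $h$ together with the CR-transversality hypothesis. First I would set up coordinates: pick a defining function $\rho'$ for $M'$ near $\pp$ with positive semidefinite scalar Levi form $\mathcal{L}_{\Theta'}$, and a defining function $\rho$ for $M$ near $p$ whose scalar Levi form $\mathcal{L}_{\Theta}$ is positive definite (possible since $M$ is strongly pseudoconvex). The composition $\rho' \circ h$ vanishes on $M$, so after possibly rescaling, $\rho' \circ h = a \rho$ for some continuous real-valued function $a$ near $p$; CR-transversality at $p$ is precisely the statement that $a(p) \neq 0$, and pseudoconvexity of both sides forces $a(p) > 0$. (This is the standard dictionary between CR-transversality and nonvanishing of the normal derivative; I would record it as the first lemma-step.)

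Next I would differentiate. Writing out $\rho' \circ h = a\rho$ and applying $L \bar \Gamma$ for CR vector fields $L, \bar\Gamma$ on $M$, evaluating at $p$ where $\rho$ and its first derivatives along $\C TM$ vanish, one obtains the transformation rule
\[
\mathcal{L}_{\Theta'}\big(h_\ast \bar L|_p, h_\ast \bar\Gamma|_p\big) = a(p)\, \mathcal{L}_{\Theta}\big(\bar L|_p, \bar\Gamma|_p\big)
\]
for all $\bar L, \bar\Gamma \in T_p^{0,1}M$, modulo the usual care that $h$ need only be $C^2$ so these second derivatives exist and are continuous. Since $a(p) > 0$ and $\mathcal{L}_\Theta|_p$ is positive definite, the right-hand side is a positive definite Hermitian form on $T_p^{0,1}M$. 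In particular it is nondegenerate, which forces $h_\ast|_{T_p^{0,1}M}$ to be injective: if $h_\ast \bar L|_p = 0$ then $\mathcal{L}_\Theta(\bar L|_p, \cdot) = 0$, so $\bar L|_p = 0$. This gives $\dim h_\ast T_p^{0,1}M = \dim_{CR}M$, and combined with the standard fact that CR-transversality plus this injectivity on the CR part upgrades to injectivity of the full differential $h_\ast$ on $\C T_p M$ (using that the characteristic/normal direction is not annihilated, again by $a(p)\neq 0$), we conclude $h$ is an immersion at $p$.

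Finally, for the trivial intersection with the Levi null space: suppose $\bar L|_p \in T_p^{0,1}M$ with $h_\ast \bar L|_p \in \mathcal{N}_{\pp}$, the null space of $\mathcal{L}_{\Theta'}|_{\pp}$. Then $0 = \mathcal{L}_{\Theta'}(h_\ast\bar L|_p, h_\ast\bar L|_p) = a(p)\,\mathcal{L}_{\Theta}(\bar L|_p,\bar L|_p)$, and since $a(p)>0$ and $\mathcal{L}_\Theta|_p$ is definite this gives $\bar L|_p = 0$, hence $h_\ast\bar L|_p = 0$; thus $h_\ast T_p^{0,1}M \cap \mathcal{N}_{\pp} = \{0\}$. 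I expect the main obstacle to be not any deep idea but the bookkeeping around the low regularity ($h \in C^2$ only, so $\rho'\circ h \in C^2$ and the identity $\rho'\circ h = a\rho$ has $a$ merely continuous) — one has to be careful that the second-order derivative identity above genuinely holds pointwise at $p$ and that the factor $a(p)$ is extracted correctly; everything else is linear algebra. I would present the transversality-implies-$a(p)>0$ step carefully (citing the sign argument from pseudoconvexity of both sides, as in the references on CR-transversal maps) since that is where the pseudoconvexity hypothesis on $M'$ is actually used.
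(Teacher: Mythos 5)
The paper does not actually prove this lemma --- it is stated as ``very well known'' with a pointer to \cite{regularitymx} --- so there is no in-paper argument to compare against; your proposal supplies the standard proof, and its overall structure (a transversality factor $a(p)\neq 0$, the transformation rule $\mathcal{L}_{\Theta'}(h_\ast\bar L,h_\ast\bar\Gamma)=a(p)\,\mathcal{L}_\Theta(\bar L,\bar\Gamma)$, then linear algebra using definiteness of $\mathcal{L}_\Theta$) is exactly right, including the upgrade from injectivity on $T^{0,1}_pM\oplus T^{1,0}_pM$ to a full immersion via $\Theta'(h_\ast X_p)=a(p)\Theta(X_p)\neq 0$ for a characteristic vector $X_p$.

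The one step you must repair is the definition of $a$. Since $h$ is only defined \emph{on} $M$ and maps it into $M'$, the function $\rho'\circ h$ is identically zero on its whole domain, so the factorization $\rho'\circ h=a\rho$ is vacuous and does not determine $a$; and if you instead extend $h$ smoothly off $M$, the resulting quotient at $p$ is the normal derivative of $\rho'\circ\tilde h$ divided by that of $\rho$, which depends on the (non-canonical) extension. Moreover, naively applying $L\bar\Gamma$ to both sides of $\rho'\circ h=a\rho$ with tangential $L,\bar\Gamma$ gives $0=0$ at $p$, not the Levi form identity. The correct invariant is the factor $a$ in $h^\ast\Theta'=a\,\Theta$ (both sides annihilate $\C T^cM$, and the annihilator is a line bundle), which is real, of class $C^1$ for $h\in C^2$, and nonzero at $p$ precisely when $h$ is CR-transversal there; the identity $\mathcal{L}_{\Theta'}(h_\ast\bar L,h_\ast\bar\Gamma)=a(p)\mathcal{L}_\Theta(\bar L,\bar\Gamma)$ then follows either tensorially from $-\tfrac{1}{2i}h^\ast d\Theta'=-\tfrac{1}{2i}d(a\Theta)$ evaluated on $(L,\bar\Gamma)$, or by expanding $L\bar\Gamma(\rho'\circ h)=0$ with the chain rule, using that each component of $h$ is CR so that only the complex Hessian term and the first-order term $\bar\partial\rho'(h_\ast[L,\bar\Gamma])=\tfrac{i}{2}a(p)\Theta([L,\bar\Gamma])$ survive. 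With that substitution, the rest of your argument (nondegeneracy forces injectivity on $T^{0,1}_pM$, and $h_\ast\bar L\in\mathcal{N}_{p'}$ forces $\mathcal{L}_\Theta(\bar L,\bar L)=0$, hence $\bar L=0$) goes through verbatim; note also that the sign $a(p)>0$ is not needed anywhere, only $a(p)\neq 0$.
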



With this fact in mind, it is clear that strict pseudoconvexity of $M$ and CR-transver\-sality of $h$ come together to imply that there are many CR directions available along $h(M)$ where some obstructions to the existence of CR families of infinitely tangential formal complex curves, encoded in $R$, might exist. If $M$ is just pseudoconvex, these CR directions may be obtained by considering a strongly pseudoconvex slice of $M$ of maximal dimension; this 
observation is the basis of 
\Cref{cor:transversal}. 


\begin{proof}[Proof of \Cref{cor:transversal}]
Take an open neighborhood $U \subseteq M'$ of $\pp$ where $\nu_{q'} < n_+$ for all $q' \in U$. Let $\Sigma \subseteq \C^N$ be a complex manifold of dimension $1+ n_+$ such that $T_p\Sigma + T^c_p M = T_p \C^N$, and such that the Levi form of $M$ is positive definite when restricted to $T_p^{0,1}M \cap T_p^{0,1}\Sigma$. Since both transversality and positive definiteness are open conditions, after possibly shrinking $U$, the intersection $S := \Sigma \cap M \cap h^{-1}(U)$ is a strongly pseudoconvex CR submanifold of $M$ with $n_+ = \dim_{CR} S$. Note that $T_qS$ for $ q \in S$ always contains a transversal tangent vector, hence the restricted map $h|_S$ is CR transversal if and only if $h$ is.

By \Cref{lem:crtransversal}, $h|_S$ is an immersion, $\dim h_\ast T_q^{0,1} S = n_+$ and $h_\ast T_q^{0,1}S \cap T_{h(q)}^{0,1}\eta = \{0\}$ for any $q \in S$, where $\eta$ denotes the Levi foliation of $M'$, and thus $T^{0,1}\eta$ is the Levi null space. This is precisely the situation of \Cref{cor:r1}, hence $r_1 \geq 1 + \dim_{CR}S = 1 + n_+$, after possibly restricting $U$ again.

Assume now that $h$ was nowhere smooth on an open set $\tilde O \subset h^{-1}(U)$. Then \Cref{thm:main}, with $k=1$, $l=1 + n_+$, would yield $q \in \tilde O$ (mapped to $q' \in M'$) and a continuously differentiable CR family of formal complex curves $(\Gamma_\xi)_{\xi \in \tilde O_1}$ defined on a neighborhood $\tilde O_1 \subseteq \tilde O$ of $q$. But now \Cref{prop:pointwise} implies $h_\ast T_{q}^{0,1} M \subseteq \ker R_{q'}(\cdot, \gamma_t(q))$, thus we find that
\begin{align*}
    K + \nu_{q'} &\geq \dim_\C \ker R_{q'}(\cdot, \gamma_t(q)) \geq \dim_\C\left( T_{q'}^{0,1}\eta + h_\ast T_q^{0,1} M\right) \\
    &\geq \dim_\C\left(  T_{q'}^{0,1}\eta  + h_\ast T_q^{0,1} S \right) = K + n_+,
\end{align*}
which contradicts the assumption $\nu_{q'} < n_+$. Therefore $h$ must be $\cinfty$-smooth on a dense open subset of $O := h^{-1}(U)$.
\end{proof}

\subsection{Connection to $2$-nondegeneracy}
\label{sec:2ndeg}
In this section, we discuss the relationship of the invariant $\nu$ to finite nondegeneracy. 

We first recall that a  CR submanifold $M \in \C^N$ is called $k$-nondegenerate, for $k \in \mathbb{N}$, at a point $p \in M$ if
\begin{align*}
    T_p^{0,1}\C^N = \left<\{\bar L_1 \bar L_j \rho_w: \rho \in \mathscr{I}_M(p), \bar L_1,\dots,\bar L_j \in \mathcal{V}_p(M), 0\leq j \leq k \in \}\right>;
\end{align*}
equivalently, its identity map 
satisfies $r_k (p)=N$, where the
$r_k$ are defined in \cref{ss:formalfol}.

It turns out that $1$-nondegeneracy is equivalent to Levi-nondegeneracy, by essentially the same argument as in the proof of \Cref{lem:r1} in \cref{sec:surfaces}.

For a pseudoconvex hypersurface which is not strictly pseudoconvex, the next step  is $2$-nondegeneracy. 

\begin{prop}
Let $M\subset \C^N$ be a uniformly Levi-degenerate hypersurface with $N-K-1$ nonzero Levi eigenvalues and Levi foliation $\eta$, and for $p \in M$, consider $\nu_p$ as in \Cref{def:nu}. Then the following are equivalent:
\begin{itemize}
    \item $M$ is $2$-nondegenerate at $p$,
    \item There exists \emph{no} germ of a section $V \in \Gamma_p(T^{1,0}\eta)$ such that $[\bar L_1, [\bar L_2, V]]\rvert_p \in T_p^{1,0}M \oplus T_p^{0,1}M$ for all $\bar L_1, \bar L_2 \in \Gamma_p(T^{0,1}M)$,
    \item $\nu_p < N-K-1$, i.e. $\nu_p$ is not maximal.
\end{itemize}
\end{prop}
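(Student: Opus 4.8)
The plan is to establish a cyclic chain of implications. The cleanest route is to show that the middle condition is a direct unwinding of $\nu_p$ being maximal, and then to connect $2$-nondegeneracy to the kernel condition using the same Levi-form bookkeeping that appeared in the proof of \Cref{lem:r1} and \Cref{thm:levifoliation}.

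\emph{Step 1: reformulate $\nu_p$ maximal.} By \Cref{def:nu}, $\nu_p = N-K-1$ means there exists $0 \neq V_p \in T_p\eta$ with $\dim_\C \ker R_p(\cdot,V_p) = N-1$, i.e. $R_p(\cdot,V_p) \equiv 0$ on all of $T_p^{0,1}M$. Since $R_p(\bar L, V_p) = \mathbb{P}_{T_p^\bot\eta}(\bar L V)$ for any section $V$ extending $V_p$, vanishing of $R_p(\cdot,V_p)$ says exactly that $\bar L|_p V \in T_p\C^N$ has no component orthogonal to $T_p\eta$ — equivalently $\bar L|_p V \in T_p\eta \oplus (\text{something tangent to } M)$; more precisely, since $\bar L|_p V$ already lies in $\C T_p M$ (because $V$ is tangent to $M$ and $\bar L$ is tangent to $M$, the derivative stays in $\C TM$... one must be slightly careful here and work with a scalar Levi form $\mathcal L_\Theta$). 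Writing things through $\Theta = i(\partial\rho - \bar\partial\rho)$, the condition $R_p(\bar L, V_p) = 0$ translates into a statement about $\Theta(\bar L|_p V)$ and hence, after one more bracket, about $[\bar L_1,[\bar L_2,V]]$ modulo $T^{1,0}M \oplus T^{0,1}M$. I would next observe that by \Cref{lem:fiberdirection}-type reasoning and the null-space description in \Cref{thm:levifoliation}, only the $T^{1,0}\eta$-part of $V_p$ can possibly contribute, so one may take $V \in \Gamma_p(T^{1,0}\eta)$. This gives the equivalence of the first bullet's negation--wait, of the second and third bullets: $\nu_p < N-K-1$ if and only if no such $V$ exists.

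\emph{Step 2: $2$-nondegeneracy versus the kernel condition.} Apply the identity-map version of the $r_k$ invariants: $M$ is $2$-nondegenerate at $p$ iff $r_2(p) = N$, iff the span of $\{\rho_w, \bar L(\rho_w), \bar L_1\bar L_2(\rho_w)\}$ over defining functions $\rho$ and CR fields is all of $T_p^{0,1}\C^N$. By \Cref{lem:r1} and the Levi-form computation there, the first two batches $\{\rho_w(p)\} \cup \{\bar L(\rho_w)(p) : \bar L \in T_p^{0,1}M\}$ already span a space of dimension $1 + (N-1-K) = N-K$ whose orthogonal complement is exactly $\{(\bar\Gamma_1,\dots,\bar\Gamma_N): \bar\Gamma \in T_p^{1,0}\eta\}$ (using that the Levi null space is $T^{0,1}\eta$). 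So $2$-nondegeneracy is equivalent to: for every $0 \neq \bar\Gamma \in T_p^{1,0}\eta$ there exist $\bar L_1,\bar L_2$ and $\rho$ with $(\bar\Gamma \mid \bar L_1\bar L_2(\rho_w)(p)) \neq 0$. Unwinding $(\bar\Gamma \mid \bar L_1\bar L_2(\rho_w))$ as iterated brackets paired against $\Theta$ — exactly as in \Cref{thm:levifoliation} where $[\tfrac12(\bar L+L),\tfrac12(\bar\Gamma+\Gamma)]$ was expanded — shows this nonvanishing is precisely the failure of $[\bar L_1,[\bar L_2,V]]|_p \in T_p^{1,0}M \oplus T_p^{0,1}M$ for $V$ the $(1,0)$-field with $\bar V = \bar\Gamma$. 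Hence the first and second bullets are equivalent.

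\emph{Expected main obstacle.} The routine-looking but genuinely delicate point is the bookkeeping identifying $R_p(\bar L, V_p)$ — defined via a componentwise Euclidean derivative and orthogonal projection onto $T^\bot\eta$ — with an intrinsic bracket expression $[\bar L_1,[\bar L_2,V]] \bmod T^{1,0}M\oplus T^{0,1}M$. One must check that the ambient derivative $\bar L|_p V$, projected to $T\C^N/T\eta$, carries the same information as the second-order Levi-type bracket against the characteristic form, and in particular that the choice of complement ($T^\bot\eta$ versus $T\C^N/T\eta$) and the choice of defining function are immaterial — the biholomorphism-invariance argument in \Cref{def:nu} handles the complement, and tensoriality (\Cref{lem:tensor}) together with the fact that all defining functions differ by a nonvanishing smooth multiple handles $\rho$. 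I would also need the small lemma that in these equivalences one may always restrict attention to $V \in \Gamma(T^{1,0}\eta)$ rather than general $V \in \Gamma(T\eta)$, which follows because the anti-holomorphic part $\bar V$, being a section of $T^{0,1}\eta \subseteq \mathcal N$, contributes nothing after one bracket with a CR field (again by \Cref{thm:levifoliation}). Once this dictionary is set up, the three equivalences are immediate.
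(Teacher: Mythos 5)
Your proposal is correct in outline and arrives at the same chain of equivalences as the paper: reduce to a single defining function, characterize $2$-degeneracy by a null vector $V$ for the zeroth-, first- and second-order data, use nondegeneracy of the Levi form transverse to its kernel to force $V\in T^{1,0}\eta$, and identify the second-order condition both with the double bracket condition and, via the componentwise derivative, with $R_p(\cdot,V)\equiv 0$. The only real difference is the computational frame: the paper works covariantly with the $(1,0)$-form $\partial\rho$, Lie derivatives $\mathcal T_{\bar L}$ and Cartan's magic formula, which makes the passage from $\mathcal T_{\bar L_2}\mathcal T_{\bar L_1}\partial\rho(V)=0$ to $\Theta([\bar L_1,[\bar L_2,V]])=0$ a two-line identity; you instead work contravariantly with the gradient vectors $\rho_w$ and the Hermitian pairing from \Cref{lem:r1}, which connects more directly to the definition of $r_2$ and of $2$-nondegeneracy but forces you to unwind the pairing $\left(\bar\Gamma\mid\bar L_1\bar L_2(\rho_w)\right)$ into brackets by hand. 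These are dual formulations of the same computation, so I count this as essentially the paper's proof. One clause you should repair: $R_p(\bar L_2,V_p)=0$ is \emph{not} ``a statement about $\Theta(\bar L_2|_pV)$'' --- it says $\bar L_2|_pV\in T_p\eta$, a $K$-codimensional condition, which is detected by $\Theta$ only after the further bracket with $\bar L_1$, using that the Levi kernel is exactly $T^{1,0}\eta$ and that the $(0,1)$-part of $[\bar L_2,V]$ contributes nothing by integrability; you clearly know this mechanism (``after one more bracket''), but as written the sentence is garbled and this is precisely the delicate identification you yourself flag, so it should be spelled out.
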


\begin{proof}
First, we note that by the product rule and the fact that two smooth defining functions of hypersurfaces differ by a smooth nonzero factor, it is sufficient to consider a single defining function $\rho$.
To work in a covariant setting, we consider $T'M = \left< d z_1|_M, \dots, d z_N|_M\right> \subset \C T^\ast M$, the space of $(1,0)$-forms on $M$.

Let the Lie derivative of a $(1,0)$-form $\omega$ with respect to a CR vector field $\bar L$ be denoted by $\mathcal{T}_{\bar L} \omega$. By suitably extending $\bar L$ and $\omega$ to a neighborhood of $p$ in $\C^N$, we compute 
\begin{align*}
    \left(\mathcal{T}_{\bar L} \omega\right)_j &= \mathcal{T}_{\bar L} \omega (\frac{\partial}{\partial z_j}) = 
    d\omega(\bar L, \frac{\partial}{\partial z_j}) + \frac{\partial}{\partial z_j} \omega(\bar L) = d\omega(\bar L, \frac{\partial}{\partial z_j}) \\
    &= \bar L \omega(\frac{\partial}{\partial z_j}) - \frac{\partial}{\partial z_j} \omega(\bar L) - \omega([\bar L, \frac{\partial}{\partial z_j}]) = \bar L \omega_j,
\end{align*}
using Cartan's magic formula and the fact that $\omega$ annihilates the $0,1$-vector fields $\bar L$ and $[\bar L, \frac{\partial}{\partial z_j}]$. Hence, in this setting, taking Lie derivatives means just taking derivatives component-wise.

For a defining function $\rho$, consider now $i\partial \rho = i\sum_j \rho_{z_j} dz_j$. This form differs from the real contact form $\Theta = i(\partial \rho - \bar \partial \rho)$ only by a multiple of $d\rho$, which vanishes along $M$. By the previous calculation, $M$ is $2$-nondegenerate if and only if $\partial \rho$, $\mathcal{T}_{\bar L_1}\rho$ and $\mathcal{T}_{\bar L_2}\mathcal{T}_{\bar L_1}\rho$ together span all of $T_p'M$, where $\bar L_1$ and $\bar L_2$ range across all germs of CR vector fields at $p$.

The hypersurface $M$ is $2$-\emph{degenerate} at $p$ if and only if there exists a nonzero vector $V \in \C T_p M$ such that $\partial \rho(V) = 0$, $(\mathcal{T}_{\bar L_1} \partial \rho)(V) = 0$ and $(\mathcal{T}_{\bar L_2}\mathcal{T}_{\bar L_1} \partial \rho)(V) = 0$. The first condition ensures that $V \in T_p^{1,0} M + T_p^{0,1} M$, and since $T'M$ annihilates $T^{0,1}M$, we may assume without loss of generality that $V \in T_p^{1,0}M$. Next, we calculate
\begin{align*}
    0 &= (\mathcal{T}_{\bar L_1} \partial \rho)(V) = \tfrac{1}{i}(\mathcal{T}_{\bar L_1} \Theta)(V) = \tfrac{1}{i} d\Theta(\bar L_1, V) = 2 \mathcal{L}_\Theta(\bar L_1, \bar V),
\end{align*}
for all $\bar L_1 \in T^{0,1}M$, hence $V \in T^{1,0}\eta$. We extend $V$ to a local section of $T^{1,0}\eta$. Then, the third condition yields
\begin{align*}
    0 &= i(\mathcal{T}_{\bar L_2}\mathcal{T}_{\bar L_1} \partial \rho)(V) 
    = \left(\mathcal{T}_{\bar L_2} d\Theta(\bar L_1,\cdot) \right) =
    d\left( d\Theta(\bar L_1,\cdot)\right)(\bar L_2,V) \\
    &= \bar L_2 d\Theta(\bar L_1, V) - V d\Theta(\bar L_1, \bar L_2) - d\Theta(\bar L_1, [\bar L_2, V]) \\
    &= - \bar L_1 \Theta([\bar L_2, V]) + [\bar L_2,V] \Theta(\bar L_1) + \Theta([\bar L_1, [\bar L_2,V]]) = \Theta([\bar L_1, [\bar L_2,V]]).
\end{align*}
Thus, $M$ is $2$-degenerate at $p$ if and only if there exists a section $V \in \Gamma_p(T^{1,0}\eta)$ such that $[\bar L_1, [\bar L_2,V]]\rvert_p \in T_p^{1,0}M \oplus T_p^{0,1}M$, for all $\bar L_1, \bar L_2 \in \Gamma_p(T^{0,1}M)$.

Writing $V = \sum_j V^j \frac{\partial}{\partial z_j}$ and $\bar L_2 = \sum_j \bar L_2^j \frac{\partial}{\partial \bar z_j}$, we find that $[\bar L_2, V] = \sum_j (\bar L_2 V^j) \frac{\partial}{\partial z_j} - \sum_j (V \bar L_2^j) \frac{\partial}{\partial \bar z_j}$. As $[\bar L_1,[\bar L_2,V]] \in T_p^{1,0}M \oplus T_p^{0,1}M$ for any $\bar L_1 \in \mathcal{V}_p$, the $(1,0)$-part of $[\bar L_2,V]$ must lie in the Levi null space of $M$, i.e. $\sum_j (\bar L_2 V^j) \frac{\partial}{\partial z_j}|_p \in T_p^{1,0}\eta$. Almost tautologically, this is the case if and only if $V_r := (V^1,\dots,V^j) \in T\eta \subseteq T\C^N$ satisfies $\bar L_2 V_r \in T\eta$ for all CR vector fields $\bar L_2$, i.e. if and only if $\nu_p = N-K-1$.
\end{proof}

The calculation of $\nu$ for boundaries of classical symmetric domains in \cref{sec:csd} thus also provides a way of concluding that these hypersurfaces are $2$-nondegenerate.
In \cite{regularitymx}, Xiao proves that a merely $\diffable 2$-smooth CR map from a strongly pseudoconvex hypersurface with $n_+$ positive Levi eigenvalues into a $2$-nondegenerate uniformly pseudoconvex hypersurface of precisely $n_+$ positive Levi eigenvalues must be $\cinfty$-smooth \emph{everywhere}. The point is that, under these conditions on the Levi eigenvalues, the image of the source manifold already contains all relevant vector fields to conclude that the map itself is $2$-nondegenerate in the sense of Lamel \cite{La1}, and thus as regular as the source and target manifolds themselves. 

The invariant $\nu_p$ contains more subtle information than mere $2$-nondegeneracy of the target manifold, as is evinced by the sharp bounds on the number of Levi eigenvalues of the source manifold achieved in \Cref{thm:zusammenfassung}. One could hope that the condition $\nu_{p'} < n_+$ from \Cref{cor:transversal} already suffices to conclude that the CR map $h$ itself is $2$-nondegenerate (at least on a dense open subset), but this very likely true only in Xiao's special case. It is not at all easy to find interesting examples of this behaviour, as everywhere finitely 
nondegenerate, pseudoconvex hypersurfaces of Levi number exceeding $2$ are
extremely scarce and notoriously hard to construct; we refer the reader to the discussion in Baouendi, Ebenfelt, and Zaitsev's paper \cite{MR2581969}.



\section{Applications to holomorphic maps}
\label{sec:domains}
In this section, we give the proof of \Cref{cor:holomorphic} in which we apply our results on CR transversal CR maps between smooth hypersurfaces in complex Euclidean space to holomorphic maps which extend to CR maps between the smooth part of their source and target domains' boundaries. Before presenting the proof, we need to collect some preliminary results. It is a well known fact that such holomorphic maps give rise to CR transversal boundary maps, as long as the target domain satisfies a suitable convexity condition (cf. \cite{fornaessthesis} for strongly pseudoconvex or \cite{regularitymx} for convex target domains). Indeed, mere pseudoconvexity of the target suffices to guarantee CR transversality of the boundary map.

\begin{prop}
\label{prop:boundarytransversality}
Let $\Omega \subseteq \C^N$, $\Omega' \subseteq \C^{N'}$ be domains and let $M \subseteq \partial \Omega$, $M' \subseteq \partial \Omega'$ be smooth real hypersurfaces contained in the smooth parts of $\partial \Omega$ and $\partial \Omega'$, respectively. Suppose that $\Omega'$ is pseudoconvex at $M'$.

Then any holomorphic map $F: \Omega \rightarrow \Omega'$ which extends to a map of regularity $C^{1,\epsilon}$ on $\Omega \cup M$ and maps $M$ into $M'$ is CR transversal along $M$.
\end{prop}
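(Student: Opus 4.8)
The plan is to localize at an arbitrary $p\in M$ and reduce CR‑transversality of $F$ at $p$ to a quantitative boundary estimate for $\rho'\circ F$, which will then follow from Hopf's lemma together with the pseudoconvexity of $\Omega'$ at $M'$. Set $p':=F(p)\in M'$ and pick local defining functions $r$ for $\Omega$ near $p$ and $\rho'$ for $\Omega'$ near $p'$; by hypothesis $\Omega'=\{\rho'<0\}$ locally and the Levi form of $\rho'$ along $M'$ is positive semidefinite near $p'$. Let $u:=\rho'\circ F$. Since $F$ maps $\Omega$ into $\Omega'$ and $M$ into $M'$, the function $u$ is defined near $p$, is strictly negative on $\Omega$ near $p$, vanishes at $p$, and (as the composition of the smooth $\rho'$ with $F\in C^{1,\epsilon}$) extends to a $C^{1,\epsilon}$ function up to $M$. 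The first step is the purely linear‑algebraic equivalence
\[
F\text{ is CR-transversal at }p\quad\Longleftrightarrow\quad du_p\neq 0 .
\]
Indeed $du_p=d\rho'_{p'}\circ dF_p$ vanishes on $T_pM$ because $dF_p(T_pM)\subseteq T_{p'}M'$, so $du_p\neq 0$ iff $du_p(N_p)=d\rho'_{p'}(dF_pN_p)\neq 0$ for a transversal vector $N_p$; and since $F$ is holomorphic, $dF_p$ commutes with $J$, which (using also $F(M)\subseteq M'$) forces $dF_p$ to carry complex tangent directions of $M$ into those of $M'$ and makes $dF_p(N_p)\notin T^{1,0}_{p'}M'\oplus T^{0,1}_{p'}M'$ equivalent to $d\rho'_{p'}(dF_pN_p)\neq 0$. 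As $T^{1,0}_{p'}M'\oplus T^{0,1}_{p'}M'$ has codimension one in $\C T_{p'}M'$, this is exactly the failure of CR-transversality.

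For the second step I would argue by contradiction: if $du_p=0$, then the $C^{1,\epsilon}$‑regularity of $u$ gives $|u(z)|\lesssim|z-p|^{1+\epsilon}$ near $p$. Now pseudoconvexity of $\Omega'$ at $M'$ is used to produce a differential inequality
\[
\Delta u(z)\ \ge\ -C\bigl(|\nabla u(z)|^2+|u(z)|\bigr)\qquad\text{on }\Omega\text{ near }p,
\]
obtained by writing the complex Hessian of $u$ as the complex Hessian of $\rho'$ pulled back by the holomorphic $F$ and translating the Levi-positivity of $\rho'$ along $M'$ into a bound for this Hessian modulo the complex gradient direction. Substituting $\psi:=1-e^{\lambda u}$ with $\lambda$ large absorbs the term $|\nabla u|^2$ and yields $\Delta\psi\le C''\psi$ on $\Omega$ near $p$, where $\psi>0$, $\psi(p)=0$ and $\psi\in C^{1,\epsilon}$; thus $\psi$ is a supersolution of the elliptic operator $\Delta-C''$, whose zeroth‑order coefficient is nonpositive. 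Because $M$ lies in the smooth part of $\partial\Omega$, the boundary is $C^2$ near $p$ and satisfies the interior ball condition, so Hopf's lemma applies and gives $\liminf_{t\downarrow 0}\psi(p-tN_p)/t>0$, hence $\liminf_{t\downarrow 0}(-u(p-tN_p))/t>0$ — contradicting $|u(z)|\lesssim|z-p|^{1+\epsilon}$. Therefore $du_p\neq 0$, so $F$ is CR-transversal at $p$; as $p$ was arbitrary, the proposition follows.

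The essential difficulty is the passage from Levi-pseudoconvexity to the displayed differential inequality, i.e.\ from a condition on the restriction of the complex Hessian of $\rho'$ to the complex tangent spaces of $M'$ to a lower bound on its whole complex Hessian modulo the gradient. When $\Omega'$ admits a plurisubharmonic (or pluriharmonic) local defining function at $p'$ — which is the case in all applications treated later, e.g.\ for convex or strongly pseudoconvex targets and for the classical symmetric domains, whose boundaries are real-analytic — one simply takes $\rho'$ plurisubharmonic, $u=\rho'\circ F$ is then plurisubharmonic on $\Omega$, and the classical Hopf lemma applies directly to $u$; the exponential substitution above is the device needed to reduce to Hopf's lemma in the general case once the differential inequality is available, and the $C^{1,\epsilon}$-hypothesis is what lets the interior estimates and the final contradiction close.
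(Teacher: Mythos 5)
Your reduction of CR-transversality at $p$ to the non-vanishing of the normal derivative of $u=\rho'\circ F$ is correct and is exactly the reduction the paper uses. The gap is in the second step, and it is precisely at the point you yourself flag as "the essential difficulty". Pseudoconvexity of $\Omega'$ at $M'$ only controls the complex Hessian of $\rho'$ on complex-tangential vectors at boundary points; the standard inequality it yields near $M'$ is
\[
\sum_{j,k}\rho'_{j\bar k}(q)X_j\bar X_k\ \ge\ -C\bigl(|\rho'(q)|\,|X|^2+|\langle\partial\rho'(q),X\rangle|\,|X|\bigr),
\]
which after pulling back by $F$ gives $\Delta u\ge -C(|u|+|\nabla u|)$ --- \emph{linear} in $|\nabla u|$, because the troublesome term is the cross term between the tangential and normal components of $X$ and cannot be made quadratic in $\langle\partial\rho',X\rangle$. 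With only a linear gradient term, the substitution $\psi=1-e^{\lambda u}$ fails: one gets $\Delta\psi=-\lambda e^{\lambda u}(\Delta u+\lambda|\nabla u|^2)$, and $-C|\nabla u|+\lambda|\nabla u|^2\ge -C^2/(4\lambda)$ is bounded below only by a negative \emph{constant}, not by $-C''\psi$, so $\psi$ is not a supersolution of $\Delta-C''$ near $p$ and Hopf's lemma does not apply. Your fallback --- assume a local plurisubharmonic defining function --- does not rescue the general statement either: there are smooth pseudoconvex domains (Behrens, Fornaess) with boundary points admitting no local plurisubharmonic defining function, and the proposition is asserted for arbitrary pseudoconvex targets.

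The paper closes exactly this gap with the local Diederich--Fornaess lemma: for any $\eta\in(0,1)$ there is a local defining function $\rho$ of $\Omega'$ near $F(p)$ such that $-(-\rho)^{\eta}$ is strictly plurisubharmonic on the inside. Then $-(-\rho)^\eta\circ F$ is genuinely subharmonic, negative on $\Omega$, and attains its maximum $0$ at $p$, so Hopf's lemma forces its normal derivative to be nonzero. This is where the hypothesis $C^{1,\epsilon}$ (rather than $C^1$) earns its keep and where your use of it should go: if the normal derivative of $\rho\circ F$ vanished at $p$, then $\rho\circ F=\mathcal O(|z-p|^{1+\epsilon})$, hence $-(-\rho)^\eta\circ F=\mathcal O(|z-p|^{\eta(1+\epsilon)})$, and choosing $\eta=\tfrac{1+\delta}{1+\epsilon}<1$ with $0<\delta<\epsilon$ still gives decay of order $1+\delta>1$, contradicting Hopf. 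Your argument as written never uses $\epsilon>0$ in an essential way at the subharmonicity step, which is a sign that the pseudoconvexity input has not been fully exploited; replacing your differential inequality by the Diederich--Fornaess exponent repairs the proof and turns it into the paper's.
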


This proposition as well as its proof parallel Proposition 9.10.5. in \cite{baouendi}, but as the latter result is only stated for equidimensional mappings with smooth boundary extension, a proof for \Cref{prop:boundarytransversality} shall nevertheless be presented.
The proof hinges on the following observation by Diederich \& Fornaess \cite[p. 133, Remark b]{dfexponent}.

\begin{lem}
\label{lem:localdfexponent}
Let $\Omega \subseteq \C^N$ be a pseudoconvex domain and $p \in \partial \Omega$ be a point in the smooth part of its boundary. Take any $\eta \in (0,1)$. Then there exists a neighborhood $U$ of $p$ and a defining function $\rho$ of $\Omega$ on $U$ such that $-(-\rho)^\eta$ is strictly plurisubharmonic on $\Omega \cap U$.
\end{lem}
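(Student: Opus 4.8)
The plan is to exhibit $\rho$ explicitly as $\rho(z)=r(z)\,e^{-M|z-p|^2}$, where $r$ is any smooth local defining function for $\Omega$ near $p$ (so $\Omega=\{r<0\}$ and $dr\neq0$ near $p$) and $M>0$ is a constant to be fixed only at the end; after a translation I assume $p=0$. Since $e^{-M|z|^2}>0$ is a smooth factor, $\rho$ is again a defining function for $\Omega$ on any neighbourhood $U$ of $p$. The statement then reduces to a pointwise inequality for complex Hessians: writing $u=-(-\rho)^\eta$, which is $C^\infty$ on $\Omega$ because $-\rho>0$ there, a direct differentiation gives
\[
\sum_{j,k}\frac{\partial^2 u}{\partial z_j\,\partial\bar z_k}\,X_j\bar X_k=\eta\,(-\rho)^{\eta-2}\Big[(1-\eta)\,\big|\langle\partial\rho,X\rangle\big|^2+(-\rho)\,\mathcal{H}_\rho(X)\Big],
\]
where $\langle a,X\rangle:=\sum_j a_jX_j$, $\langle\partial\rho,X\rangle=\sum_j\rho_{z_j}X_j$ and $\mathcal{H}_\rho(X):=\sum_{j,k}\rho_{z_j\bar z_k}X_j\bar X_k$. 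Since $\eta(-\rho)^{\eta-2}>0$ on $\Omega$, strict plurisubharmonicity of $u$ on $\Omega\cap U$ is exactly positivity of the bracket for all $X\neq0$ and all $z\in\Omega\cap U$.

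Next I would expand $\partial\rho$ and $\mathcal{H}_\rho$ in terms of $r$, with $e^{-M|z|^2}$ playing the role of a strictly plurisubharmonic weight. The computation gives $e^{M|z|^2}\langle\partial\rho,X\rangle=\langle\partial r,X\rangle-Mr\,\langle\bar z,X\rangle$ and
\[
e^{M|z|^2}\mathcal{H}_\rho(X)=\mathcal{H}_r(X)-2M\,\Re\!\big(\langle\partial r,X\rangle\,\overline{\langle\bar z,X\rangle}\big)+M\,|r|\,\big(|X|^2-M\,|\langle\bar z,X\rangle|^2\big).
\]
The point of the weight is the last term: since $|\langle\bar z,X\rangle|\le|z|\,|X|$, shrinking $U$ so that $|z|\le 1/M$ on $U$ makes $|X|^2-M|\langle\bar z,X\rangle|^2\ge\tfrac12|X|^2$, so that term is at least $\tfrac M2|r|\,|X|^2$, while the cross term is then bounded by $2|\langle\partial r,X\rangle|\,|X|$ with a constant not growing with $M$. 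The only other ingredient is the standard consequence of pseudoconvexity: decomposing $X$ into its $\partial r$-null component $X^t$ and a transversal component, using positive semidefiniteness of the Levi form of $\partial\Omega$ together with the fact that the smallest eigenvalue of $\mathcal{H}_r$ restricted to the hyperplane $\{\langle\partial r(z),X\rangle=0\}$ is locally Lipschitz in $z$ and $\ge0$ on $\partial\Omega$ (here smoothness of $r$ enters), one gets $\lambda_{\min}(z)\ge-C_1|r(z)|$ near $p$ and hence, on $\bar\Omega\cap U$,
\[
\mathcal{H}_r(X)\ \ge\ -C_1\,|r(z)|\,|X|^2-C_0\big(|\langle\partial r,X\rangle|\,|X|+|\langle\partial r,X\rangle|^2\big).
\]
Combining the last two displays, for $M$ large (in terms of $C_0,C_1$) one obtains $e^{M|z|^2}\mathcal{H}_\rho(X)\ge\tfrac M4|r|\,|X|^2-C_4|\langle\partial r,X\rangle|\,|X|-C_0|\langle\partial r,X\rangle|^2$ with $C_4$ independent of $M$.

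The last step would be to assemble the bracket. Using $-\rho=|r|\,e^{-M|z|^2}$, the bound just obtained, and $|\langle\partial\rho,X\rangle|^2\ge e^{-2M|z|^2}\big(\tfrac12|\langle\partial r,X\rangle|^2-|r|^2|X|^2\big)$, and controlling the resulting cross term $C_4|r|\,|\langle\partial r,X\rangle|\,|X|$ by Young's inequality against $(1-\eta)|\langle\partial r,X\rangle|^2$, one gets — after first fixing an initial neighbourhood (which determines $C_0,C_1,C_4$), then choosing $M$ large in terms of $\eta,C_0,C_1,C_4$, then shrinking $U$ in terms of $M$ —
\[
(1-\eta)\,\big|\langle\partial\rho,X\rangle\big|^2+(-\rho)\,\mathcal{H}_\rho(X)\ \ge\ e^{-2M|z|^2}\Big(\tfrac{1-\eta}{8}\,|\langle\partial r,X\rangle|^2+|r|^2\,|X|^2\Big),
\]
which is strictly positive for every $X\neq0$ and every $z\in\Omega\cap U$ (where $|r|>0$). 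By the Hessian identity this shows $-(-\rho)^\eta$ is strictly plurisubharmonic on $\Omega\cap U$. The main obstacle is precisely this final bookkeeping: the constants must be chosen in the right order, and one must check that after Young's inequality the cross-term losses still carry the factor $|r|^2$ — which is exactly why $M$ has to be taken large rather than merely $U$ small — so that they are dominated by the genuinely positive $|X|^2$-contribution, of order $M|r|^2$, coming from the weight. The Hessian identity, the expansion of $\mathcal{H}_\rho$, and the pseudoconvexity estimate are all routine verifications.
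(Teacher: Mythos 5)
The paper does not prove this lemma itself: it defers to Diederich--Fornaess (the remark on p.~133) and to Theorem 2.2.17 in the cited book of Baouendi--Ebenfelt--Rothschild, and your argument --- the weighted defining function $\rho = r\,e^{-M|z|^2}$, the Hessian identity for $-(-\rho)^\eta$, the pseudoconvexity estimate $\lambda_{\min}\ge -C_1|r|$ on the complex tangent hyperplane, and the three-stage choice of constants ($U_0$, then $M$, then $U$) --- is exactly the standard proof given in those references. Your computations check out (including the expansion of $\mathcal{H}_\rho$ and the final absorption of the cross terms; the term $-C_0|r|\,|\langle\partial r,X\rangle|^2$ is indeed killed by the final shrinking of $U$), so the proposal is correct and coincides with the approach the paper relies on.
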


As paper \cite{dfexponent} is mainly interested in global properties of pseudoconvex domains, the proof of \Cref{lem:localdfexponent} is merely hinted at in a remark. For a full proof, see \cite[Thm. 2.2.17]{baouendi}.

\begin{proof}[Proof of \Cref{prop:boundarytransversality}]
Suppose that $F:\Omega \rightarrow \Omega'$ extends to a CR map that is not CR-transversal at a point $p \in M$. Choose $0<\delta<\epsilon$, and let $\eta = \frac{1+\delta}{1+\epsilon}$. By \Cref{lem:localdfexponent}, there exists a neighborhood $U$ of $F(p)$ and a defining function $\rho \in C^\infty(U,\R)$ for $M'$ such that $U \cap \Omega' = \rho^{-1}(-\infty,0)$ and such that $-(-\rho)^\eta$ is strictly plurisubharmonic on $U \cap \Omega'$. By assumption, the normal derivative of $\rho \circ F$ vanishes at $p$, hence $\rho \circ F$ has a critical point at $p$. By Hölder continuity of the derivative, $\rho \circ F (z) = \mathcal{O}(|z-p|^{1+\epsilon})$ near $p$. But this implies $-(-\rho)^\eta \circ F(z) = \mathcal{O}(|z-p|^{1+\delta})$ on $F^{-1}(U\cap \Omega')$, and thus the normal derivative of $-(-\rho)^\eta \circ F$ at $p$ vanishes as well. Since $-(-\rho)^\eta \circ F$ as a pull-back of a subharmonic function along a holomorphic map is subharmonic as well, and since $-(-\rho)^\eta \circ F$ clearly has a local maximum at $p$, the normal derivative of $\rho^\eta \circ F$ at $p$ is nonzero by the Hopf lemma, a contradiction.
\end{proof}

A holomorphic map inherits the regularity of its induced boundary map, immediately allowing the transferral of \Cref{cor:transversal} to holomorphic maps.

\begin{proof}[Proof of \Cref{cor:holomorphic}]
As $\Omega'$ is pseudoconvex at $M'$, \Cref{prop:boundarytransversality} yields CR transversality of the boundary map $h := H|_{M'}$. Thus, the hypothesis of \Cref{cor:transversal} is met, and $h$ is $\cinfty$-smooth on a dense open subset $O \subseteq M$ of a neighborhood of $p$ in $M$. By Theorem 7.5.1. in \cite{baouendi}, $H$ then extends to a $\cinfty$-smooth map on $O\cup \Omega$.
\end{proof}

\begin{rem}
In particular, if $H: \Omega \rightarrow \Omega'$ is a \emph{proper} holomorphic map and extends to a $\diffable{N'-n_+}$-smooth map on $\Bar\Omega$, it maps $M$ into the topological boundary of $\Omega'$. If a point $p\in M$ is known to be mapped to some $p' \in M'$, an open neighborhood $U \subseteq M$ of $p$ is then also mapped into $M'$, and the hypothesis of \Cref{cor:holomorphic} is satisfied.
\end{rem}

\section{Maps into boundaries of classical symmetric domains}
\label{sec:csd}
Before we discuss the CR geometry of the boundaries of the classical symmetric domains that we need to apply our results, let us recall some basic facts. We call a bounded domain $\Omega \subset \C^N$ a \emph{bounded symmetric domain} if it exhibits a biholomorphic involution $h_p: \Omega \rightarrow \Omega$ for every point $p \in \Omega$ which has $p$ as an \emph{isolated} fixed point and which satisfies $Dh(p) = -\mathbb{I}_N$ (cf. \cite{symmetricdomain}).

A bounded domain $\Omega$ may be equipped with the \emph{Bergman metric}, a Hermitian metric with the property that each biholomorphism on $\Omega$ is an isometry. Considered together with this metric, a bounded symmetric domain $\Omega$ becomes a special case of a \emph{Hermitian symmetric space}, i.e. a manifold equipped with a Hermitian metric such that each point is an isolated fixed point of some involutive isometry. It can be shown that the group of isometries of such manifolds acts transitively, therefore they can be expressed as the coset space of the the stabilizer group of $\Omega$, defined as the group of isometries leaving a chosen point fixed, in the full isometry group of $\Omega$ (cf. \cite{symmetricspace}). This allows the classification of bounded symmetric domains by Lie group techniques.

According to \cite{symmetricdomain}, any bounded symmetric domain is biholomorphic to a direct product of \emph{irreducible} boun\-ded symmetric domains. Irreducible bounded symmetric domains fall into four series of classical symmetric domains as well as two exceptional cases (as classified by Cartan, cf. \cite{cartan}). The study of regularity of proper holomorphic maps into classical symmetric domains, and consequently of CR maps into their boundaries, has been taken up by Xiao in \cite{regularitymx}; for important
applications of maps between classical symmetric domains, we refer the reader to e.g. Kim and Zaitsev's paper on rigidity of these maps \cite{MR3343893}. We will adopt Xiao's naming convention for the classical symmetric domains, which differs from Cartan's original numbering only in swapping domains of the third and fourth kind.

Finally,  let us briefly recall the singular value decomposition from linear algebra. A matrix $A \in \C^{m\times n}$, $m \leq n$ may always be decomposed as $A = U \Sigma V^\ast$, where
\begin{enumerate}
    \setlength\itemsep{0cm}
    \item $U \in \C^{m\times m}$ is a unitary matrix, forming a basis of eigenvectors for $AA^\ast$,
    \item $\Sigma \in \C^{m\times n}$ is a diagonal matrix with nonnegative entries, and
    \item $V \in \C^{n\times n}$ is another unitary matrix, forming a basis of eigenvectors for $A^\ast A$.
\end{enumerate}
The diagonal entries of $\Sigma$, $0\leq \sigma_1 \leq \dots \leq \sigma_m$ are called the \emph{singular values} of $A$. They are given by the square roots of the eigenvalues of the (Hermitian, positive semidefinite) matrix $AA^\ast$, or equivalently by the square roots of the $m$ largest eigenvalues of $A^\ast A$. The largest singular value of $A$ yields the operator norm of $A$ with respect to the standard scalar product on $\C^m$ and $\C^n$. The matrix $V$ of \emph{right singular vectors} may be freely chosen among the orthonormal eigenvector bases of $A^\ast A$, which then fixes $\Sigma U = AV$, and therefore those columns of $U$ corresponding to nonzero singular values, the \emph{left singular vectors}.

\subsubsection{Classical domains of the first kind}

We will denote the examples in the first series by $D_I^{m,n}$ for $1 \leq m \leq n$. According to Cartan \cite{cartan}, they may be realized as
\begin{align*}
    D_I^{m,n} = \{Z \in \C^{m\times n}: \mathbb{I}_m-Z Z^\ast \text{ is strictly positive definite.}\}
\end{align*}

The condition $\mathbb{I}_m-Z Z^\ast > 0$ is equivalent to the largest singular value of $Z$ being strictly bounded by one, i.e. $\lVert Z \rVert < 1$, where $\lVert \cdot \rVert$ always denotes
 the usual Euclidean matrix norm (or vector norm, respectively). The boundary of $D_I^{m,n}$ is thus given by the set of matrices of norm $1$, equivalently, by those matrices which have $1$ as their largest singular value. This set is a smooth manifold where exactly one singular value is $1$. 
To see this, consider the characteristic polynomial $P(\lambda) = \det(\lambda\mathbb{I}_m - ZZ^\ast)$ of $ZZ^\ast$, which has a simple zero at $1$ by assumption. Now $\rho(Z) := \det(\mathbb{I}_m - ZZ^\ast)$ has nonvanishing gradient, since \[\rho(Z + \mu Z) = \det(\mathbb{I}_m - |1 + \mu|^2 ZZ^\ast) = |1 + \mu|^{2m}P(|1+\mu|^{-2})\] has nonvanishing derivative, providing us with a defining equation.

Let us denote this smooth piece of the boundary by $M_I^{m,n}$. Because $M_I^{m,n}$ bounds the convex region $D_I^{m,n}$, it is a pseudoconvex real hypersurface. The singular value decomposition will translate to a foliation of $M_I^{m,n}$ by complex (in fact, complex linear) manifolds, setting $M_I^{m,n}$ up as an interesting example case for applying \Cref{cor:transversal}. The following result should be compared to Proposition 1.2 in \cite{regularitymx}, where only strongly pseudoconvex hypersurfaces in $\C^{m{+}n{-}1}$ are considered.

\begin{prop}
\label{prop:d1}
Let $m \geq n \geq 2$ and $M$ be a pseudoconvex hypersurface with at least $n_+ \geq m{+}n{-}3$ positive Levi eigenvalues.
Then every CR-transversal CR map $h$ of regularity $C^{mn-n_+}$ from $M$ into $\MOne$ is generically smooth.
\end{prop}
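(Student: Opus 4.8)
The plan is to obtain \Cref{prop:d1} as an instance of \Cref{cor:transversal} applied to the target $M' := \MOne \subset \C^{N'}$ with $N' = mn$. Essentially all that has to be done is to identify the Levi foliation of $\MOne$ and to compute the invariant $\nu$ of \Cref{def:nu} along it; the outcome will be that $\nu \equiv m+n-4$ on $\MOne$, after which the proposition follows by bookkeeping.

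\emph{Geometry of the foliation.} Writing a boundary matrix $Z$ (with $\lVert Z\rVert = 1$ and the largest singular value simple, as in the discussion above) via a singular value decomposition with unit top singular vectors $u_1(Z)\in\C^m$ and $v_1(Z)\in\C^n$, the face of $\overline{D_I^{m,n}}$ through $Z$ is the complex affine piece $u_1 v_1^{\ast} + (\mathbb{I}_m - u_1 u_1^{\ast})\{W : \lVert\,\cdot\,\rVert<1\}(\mathbb{I}_n - v_1 v_1^{\ast})$, a complex submanifold of dimension $(m-1)(n-1)$ containing $Z$; these faces make up the candidate foliation $\eta$. To make this rigorous and simultaneously set up the computation of $\nu$, I would invoke \Cref{lem:setup}. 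Using the biholomorphic invariance of $\nu$ together with the transitivity of $\mathrm{Aut}(D_I^{m,n})$ on the smooth boundary stratum, it suffices to work at the single point $p' = E_{11}$ (the matrix with a single $1$ in the upper left corner). There I take $\Sigma := E_{11} + \{\text{matrices supported on the first row and the first column}\}$, a complex affine subspace of dimension $m+n-1$ complementary to $T_{E_{11}}\eta$, and check that the ``cross-shaped'' slice $S := M' \cap \Sigma$ is strongly pseudoconvex at $E_{11}$: a direct computation of the largest eigenvalue of $ZZ^{\ast}$ restricted to $\Sigma$ shows that $S$ has a sphere-like defining equation there. By \Cref{lem:setup} this proves that $\MOne$ is uniformly pseudoconvex with Levi foliation $\eta$ and reduces the problem to computing $\nu_{E_{11}} = \max_{0\neq V\in T_{E_{11}}\eta}\dim_\C\ker R^S_{E_{11}}(\cdot,V)$.

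\emph{Computing $R^S$ — the main obstacle.} I would use the leaf-adapted sections $V_{ab}(Z) := (\mathbb{I}_m - u_1 u_1^{\ast})\,E_{ab}\,(\mathbb{I}_n - v_1 v_1^{\ast})$ for $a,b\geq 2$, where $E_{ab}$ denotes the $m\times n$ matrix unit; these restrict at $E_{11}$ to the natural frame $\{E_{ab}\}_{a,b\geq 2}$ of $T_{E_{11}}\eta$. The delicate point is the first-order behaviour of the top singular vectors at $E_{11}$, and in particular the asymmetry between them: since $u_1$ is an eigenvector of $ZZ^{\ast}$ it depends \emph{holomorphically} on $Z$ to first order there, whereas $v_1$, being an eigenvector of $Z^{\ast}Z$, depends \emph{anti-holomorphically} to first order; a one-line eigenvector perturbation gives $\partial_{z_{\mu 1}}u_1|_{E_{11}} = e_\mu$ and $\partial_{\bar z_{1\nu}}v_1|_{E_{11}} = e_\nu$ for $\mu,\nu\geq 2$, all other first derivatives vanishing at $E_{11}$. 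Differentiating $V_{ab}$ along a CR direction of $S$ (at $E_{11}$ these are spanned by $\partial_{\bar z_{1k}}$ and $\partial_{\bar z_{j1}}$, $j,k\geq 2$), applying the product rule, and projecting onto $T_{E_{11}}^{\bot}\eta$ then gives, for a section $V$ with base value the matrix $\Lambda = (\lambda_{ab}) \in \C^{(m-1)\times(n-1)}$ in the frame above, that $\ker R^S_{E_{11}}(\cdot, V) \cong \ker\Lambda \oplus \ker\Lambda^{T}$, of complex dimension $(m+n-2) - 2\,\mathrm{rk}\,\Lambda$. Maximizing over $\Lambda \neq 0$, i.e. taking $\mathrm{rk}\,\Lambda = 1$, yields $\nu_{E_{11}} = m+n-4$, and by homogeneity $\nu \equiv m+n-4$ on all of $\MOne$. (Incidentally this equality also yields the sharpness asserted after \Cref{thm:zusammenfassung}, since when $n_+ = m+n-4$ one has $\nu = n_+$ and \Cref{cor:transversal} no longer applies.)

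\emph{Conclusion.} For every $q'\in\MOne$ we have $\nu_{q'} = m+n-4 < m+n-3 \leq n_+$, so the hypothesis $\nu_{h(p)} < n_+$ of \Cref{cor:transversal} holds at every point of $M$. Moreover, since $n_+$ cannot exceed the $m+n-2$ positive Levi eigenvalues of $\MOne$ (otherwise no CR-transversal map $M\to\MOne$ exists and there is nothing to prove), $mn - n_+ \geq mn - (m+n-2) = (m-1)(n-1) + 1 \geq 2$ for $m,n\geq 2$, so a CR map of class $C^{mn-n_+}$ is in particular of class $C^{N'-n_+}\cap C^2$. Thus \Cref{cor:transversal} applies at each $p\in M$ and shows that $h$ is $\cinfty$-smooth on a dense open subset of a neighbourhood of $p$; since the set of points near which $h$ is $\cinfty$-smooth is open, a routine argument upgrades this to smoothness of $h$ on a dense open subset of $M$.
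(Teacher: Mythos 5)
Your proposal is correct and follows essentially the same route as the paper: reduce to a single boundary point via the boundary orbit theorem and biholomorphic invariance of $\nu$, set up a transversal strongly pseudoconvex slice as in \Cref{lem:setup}, compute $R^S$ on leaf-adapted sections to find $\ker R^S_{p'}(\cdot,V)\cong\ker\Lambda\oplus\ker\Lambda^T$ and hence $\nu\equiv m+n-4$, and conclude with \Cref{cor:transversal}. The only differences are implementation details: the paper slices along the rank-one variety, where $S$ is visibly strongly convex because $\lVert uv^\ast\rVert=\lVert u\rVert\,\lVert v\rVert$, and differentiates the explicitly polynomial section $(\mathbb{I}_m-ZZ^\ast)B_0(\mathbb{I}_n-Z^\ast Z)$ along holomorphic curves, whereas you use the affine ``cross'' slice and first-order perturbation of the top singular vectors; both computations agree.
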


In the course of our proof, we will be utilizing  the boundary orbit theorem, which states that the Lie group of biholomorphic automorphisms of $D_I^{m,n}$ also acts transitively on $\MOne$ by ambient biholomorphisms (see e.g. \cite{wolf} or \cite[proof of Lemma 2.2.3]{MN}). This allows us to analyze $\MOne$ around points which are particularly easy to understand from the matrix model alone, namely the rank one matrices  in $\MOne$.

Indeed, suppose $h: M \rightarrow \MOne$ is nowhere smooth on a neighborhood $O\subset M$ of a point $p \in M$. Any matrix $ab^\ast$ for vectors $a\in \C^m$, $b \in \C^n$ of unit norm is contained in $\MOne$, since it has a lone singular value $1$. By the boundary orbit theorem, there exists a biholomorphic map $F_{h(p)}$ defined on a neighborhood of $h(p)$ mapping $h(p)$ to $ab^\ast \in \MOne$ and $\MOne$ into itself. Then $\tilde h := F_{h(p)} \circ h$ is a CR-transversal CR map taking $p$ to $ab^\ast$, which is nowhere smooth on $O$ as well. At $ab^\ast$, we check directly that the prerequisites to apply \Cref{cor:transversal} are fulfilled.

\begin{lem}
\label{d1structure}
Let $a\in \C^n, b \in \C^m$ be unit vectors. Around $ab^\ast$, the pseudoconvex hypersurface $\MOne$ is foliated by $(n{-}1){\times}(m{-}1)$-dimensional complex (linear) manifolds. Its Levi form has exactly $m+n-2$ positive eigenvalues, and $\nu_{ab^\ast} = m+n-4$.
\end{lem}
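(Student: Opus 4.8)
The plan is to reduce to the base point $e_{1}e_{1}^{*}$ by a unitary change of coordinates, to make the Levi foliation explicit, to apply \Cref{lem:setup} to a suitable transversal slice, and then to compute the tensor $R^{S}$ by hand. \emph{Reduction and the foliation.} Given unit vectors $a\in\C^{m}$, $b\in\C^{n}$, choose $U\in U(m)$ and $W\in U(n)$ with $Ua=e_{1}$, $Wb=e_{1}$; then $Z\mapsto UZW^{*}$ is a linear automorphism of $\C^{m\times n}$ preserving the operator norm, hence preserving $D_{I}^{m,n}$ and $\MOne$ and carrying $ab^{*}$ to $e_{1}e_{1}^{*}=:p'$. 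Since $\nu$ is a biholomorphic invariant by \Cref{def:nu}, it suffices to treat the point $p'$. Near $p'$ the top eigenvalue $\lambda(ZZ^{*})$ is simple, so $\MOne=\{\lambda(ZZ^{*})=1\}$ is smooth there and the unit left and right singular vectors $u(Z)\in\C^{m}$, $v(Z)\in\C^{n}$ for the singular value $1$ depend smoothly on $Z$, up to a common phase. Put $\mathcal{N}_{Z}:=\{X\in\C^{m\times n}:Xv(Z)=0,\ u(Z)^{*}X=0\}$, a complex subspace of dimension $K:=(m-1)(n-1)$, and $\eta_{Z}:=(Z+\mathcal{N}_{Z})\cap\MOne$. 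For $X\in\mathcal{N}_{Z}$ we have $(Z+X)v(Z)=u(Z)$ and $(Z+X)^{*}u(Z)=v(Z)$, so for $X$ small $1$ remains the top singular value of $Z+X$ with the same singular vectors, whence $\mathcal{N}_{Z+X}=\mathcal{N}_{Z}$; thus $\{\eta_{Z}\}$ is a foliation by $K$-dimensional complex (linear) manifolds, and $T_{p'}\eta=\mathcal{N}_{p'}$ is the space of matrices whose first row and first column vanish.

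\emph{The slice.} Write $Z=\left(\begin{smallmatrix}w&\beta^{T}\\\gamma&Z''\end{smallmatrix}\right)$ with $w\in\C$, $\beta\in\C^{n-1}$, $\gamma\in\C^{m-1}$ and $Z''\in\C^{(m-1)\times(n-1)}$, and let $\Sigma:=\{Z''=0\}$, a complex affine subspace of dimension $m+n-1=N'-K$ with $T_{p'}\Sigma\oplus T_{p'}\eta=\C^{m\times n}$. A second order expansion of the top eigenvalue of $ZZ^{*}$ along $\Sigma$ shows that $S:=\MOne\cap\Sigma$, after writing $w=1+\zeta$, is given near $p'$ by the normal form $\operatorname{Re}\zeta=-\tfrac12\bigl(|\zeta|^{2}+\|\beta\|^{2}+\|\gamma\|^{2}\bigr)+O(3)$; equivalently, the plurisubharmonic function $\|Z\|^{2}|_{\Sigma}$ has positive definite complex Hessian at $p'$. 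Hence $S$ is strongly pseudoconvex at $p'$ with $\dim_{CR}S=m+n-2$, so \Cref{lem:setup} applies: $\MOne$ is a uniformly pseudoconvex hypersurface near $p'$ with $\eta$ its Levi foliation, its Levi form has exactly $K=(m-1)(n-1)$ zero eigenvalues and therefore $mn-1-(m-1)(n-1)=m+n-2$ positive ones, and $\nu_{p'}=\max_{0\neq V\in T_{p'}\eta}\dim_{\C}\ker R^{S}_{p'}(\cdot,V)$.

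\emph{Computing $R^{S}$ and $\nu$.} Fix $V=\left(\begin{smallmatrix}0&0\\0&V'\end{smallmatrix}\right)\in T_{p'}\eta$ with $V'\in\C^{(m-1)\times(n-1)}$, and extend it to the section $V(Z):=(I-u(Z)u(Z)^{*})\,V\,(I-v(Z)v(Z)^{*})$ of $T\eta$ near $p'$, which satisfies $V(p')=V$ and $u(Z)^{*}V(Z)=0=V(Z)v(Z)$. From $Zv(Z)=\sigma u(Z)$ and $Z^{*}u(Z)=\sigma v(Z)$ one reads off the first order expansions $u(Z)=e_{1}+(0,\gamma)^{T}+O(2)$ and $v(Z)=e_{1}+(0,\bar\beta)^{T}+O(2)$. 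Since $T^{0,1}_{p'}S=\operatorname{span}\{\partial_{\bar\beta_{j}},\partial_{\bar\gamma_{k}}\}$, the product rule — using that $\beta,\gamma$ are holomorphic and $\bar\beta,\bar\gamma$ antiholomorphic coordinate functions, and that the $O(2)$ terms do not contribute to first derivatives at $p'$ — shows that $\bar LV|_{p'}$ has vanishing lower right $(m-1)\times(n-1)$ block, so it equals $R^{S}_{p'}(\bar L,V)$, with $(1,2)$ block $-(\bar L\bar\gamma)^{T}V'$ and $(2,1)$ block $-V'(\bar L\bar\beta)$. Concretely $R^{S}_{p'}(\partial_{\bar\gamma_{k}},V)$ carries minus the $k$-th row of $V'$ in its $(1,2)$ block and $R^{S}_{p'}(\partial_{\bar\beta_{j}},V)$ carries minus the $j$-th column of $V'$ in its $(2,1)$ block. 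Therefore $\ker R^{S}_{p'}(\cdot,V)=\ker V'\oplus\ker(V')^{T}$ (in the $\bar\beta$- and $\bar\gamma$-directions respectively), which has dimension $(n-1-\operatorname{rk}V')+(m-1-\operatorname{rk}V')=m+n-2-2\operatorname{rk}V'$. Maximizing over $V\neq 0$, i.e.\ over $\operatorname{rk}V'\geq1$ with the maximum attained by a rank-one $V'$, gives $\nu_{p'}=m+n-4$, hence $\nu_{ab^{*}}=m+n-4$.

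I expect the genuinely delicate part to be the bookkeeping in the last step — the first order expansion of the singular vectors, the block structure of $\bar LV|_{p'}$, and the sign and transpose conventions — rather than anything conceptual; the one structural point that needs care is checking that the affine subspaces $Z+\mathcal{N}_{Z}$ really assemble into a smooth foliation, which is exactly where simplicity of the top singular value near $p'$ is used.
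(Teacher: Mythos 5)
Your proof is correct and follows the same overall strategy as the paper's: reduce to a convenient base point, exhibit the Levi foliation via the singular value decomposition, and compute $R^S$ on a transversal slice using \Cref{lem:setup}. The genuine difference lies in the choice of slice and the computations it forces. The paper takes for $\Sigma$ the (curved) variety of rank-one matrices, so that in a holomorphic parametrization $S$ becomes $\{(|z_1|^2+\dots+|z_m|^2)(1+|w_2|^2+\dots+|w_n|^2)=1\}$ and strong pseudoconvexity is immediate from strong convexity; you instead take the linear slice $\{Z''=0\}$ and must verify strong pseudoconvexity by a second-order perturbation expansion of the top eigenvalue of $ZZ^*$ --- this is correct, and it is the one place where your argument does analytic work that the paper's choice of slice avoids, so it deserves to be written out rather than asserted. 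Conversely, your linear slice gives coordinates $(\zeta,\beta,\gamma)$ in which $T^{0,1}_{p'}S$ and the block structure of $\bar L V|_{p'}$ are transparent, whereas the paper parametrizes $T^c_{p'}S$ by pairs $(\alpha,\beta)$ and differentiates along explicit rank-one curves $(a+t\alpha)(b+\bar t\beta)^*$. Your section $(I-u(Z)u(Z)^*)V(I-v(Z)v(Z)^*)$ agrees with the paper's $(I-ZZ^*)B_0(I-Z^*Z)$ on rank-one matrices of norm one, so the two computations coincide where they overlap, and it is well defined despite the phase ambiguity in $(u,v)$ since only the projectors $uu^*$ and $vv^*$ enter. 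A small bonus of your version: you obtain the exact kernel dimension $m+n-2-2\operatorname{rk}V'$, which exhibits the maximum as attained at rank one, whereas the paper only records the upper bound $m+n-4$ together with the claim of equality.
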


If $\tilde h$ was nowhere smooth around $p$, this would contradict \Cref{cor:transversal}, as $\nu_{ab^\ast} = m+n-4 < n_+$. This proves \Cref{prop:d1}.

\begin{proof}[Proof of \Cref{d1structure}]
Let $\Sigma \subset \C^{m\times n}$ be the set of $m \times n$ matrices of rank (exactly) $1$, which is an $(m{+}n{-}1)$-dimensional holomorphic manifold containing $ab^\ast$. In linear coordinates such that $a = (1,0,\dots,0)^T \in \C^n$ and $b = (1,0,\dots,0)^T \in \C^m$, $\Sigma$ is parametrized holomorphically by 
\[(z_1,\dots,z_m,  w_2,\dots,w_n) \mapsto (z_1,\dots,z_m)^T(1,w_2\dots,w_n)\] around $ab^\ast = (1,0,\dots,0)^T(1,0,\dots,0)$. To see explicitly that this map is one-to-one near $ab^\ast$, for a matrix $Z \in \Sigma$, let $w$ be the (unique) intersection of $(\ker Z)^\bot$ and $b + \left<b\right>^\bot$. Then $w^\ast = (1,w_2,\dots,w_n)$ and $Z(w)/\lVert w \rVert^2 = (z_1,\dots,z_m)^T$.

The hypersurface $S: = \Sigma \cap \MOne$ of rank one matrices with norm $1$ is strongly pseudoconvex. Indeed, because $\lVert uv^\ast \rVert = \lVert u \rVert\lVert v \rVert$, a defining equation for $S$ is given by 
\begin{align*}
    \rho(z_1,\dots,z_m,w_2,\dots,w_n) = (|z_1|^2 + \dots + |z_m|^2)(1 + |w_2|^2 + \dots + |w_n|^2) - 1 = 0,
\end{align*}
with (real) Hessian $2 \mathbb{I}_{2(m+n-1)}$ at $ab^\ast$, implying that $S$ is actually strongly convex.

The singular value decomposition expresses any matrix $A \in \MOne$ as $uv^\ast + B$, where $u$ and $v$ are unit singular vectors (unique up to simultaneous multiplication by $\lambda \in \mathbb{S}^1$) corresponding to the lone singular value $1$, and the uniquely determined matrix $B \in \C^{m \times n}$ satisfies $B v = 0$, $u^\ast B = 0$ and $\lVert B \rVert < 1$. Conversely, every matrix $uv^\ast + B$ of this type lies in $\MOne$. The set of all $B \in \C^{m\times n}$ with $u^\ast B = 0$ and $Bv = 0$ is an $(m-1)\times(n-1)$-dimensional vector space, and thus the affine planes 
\begin{align*}
    \eta_{uv^\ast} := \{uv^\ast + B: B \in \C^{m\times n}, u^\ast B = 0, Bv = 0\}
\end{align*} for $uv^\ast \in S$ provide the desired foliation $\eta$ of $\MOne$ near $ab^\ast$. The tangent bundle $T\eta$ at $S$ is just given by $T_{uv^\ast}\eta = \{B \in \C^{m\times n}: Bv = 0, u^\ast B = 0\}$.

Having established the setup from \Cref{lem:setup}, all that remains is to compute the tensor $R^S$ at $ab^\ast$. Take $B_0 \in T_{ab^\ast} \eta$, $B_0 \neq 0$. If we define $B(Z)$ for $Z \in \C^{m\times n}$ by
\begin{align*}
    B(Z) = (\mathbb{I}_m - ZZ^\ast)B_0(\mathbb{I}_n - Z^\ast Z),
\end{align*}
then $B(uv^\ast)$ provides a section of $T\eta$ along $S$ satisfying $B(ab^\ast) = B_0$, since
\begin{align*}
    u^\ast B(uv^\ast) &= u^\ast(\mathbb{I}_m - u u^\ast)B_0(\mathbb{I}_n - v v^\ast) = 0, \\
    B(uv^\ast) v &= (\mathbb{I}_m - u u^\ast)B_0(\mathbb{I}_n - v v^\ast)v = 0 \textrm{ and} \\
    B(ab^\ast) &= (\mathbb{I}_m - a a^\ast)B_0(\mathbb{I}_n - b b^\ast) = \mathbb{I}_mB_0\mathbb{I}_n = B_0.
\end{align*}

Returning to $S$, we work out that $\mathfrak{V} := \{a\beta^\ast + \alpha b^\ast: \alpha \in \left< a \right>^\bot \subset \C^m, \beta \in \left< b \right>^\bot \subset \C^n\}$ is the complex tangent space of $S$ at $ab^\ast$. To show that $\mathfrak{V}$ is tangential, we take two curves $\gamma_1: (-\varepsilon,\varepsilon) \rightarrow \mathbb{S}^{2m-1}$ and $\gamma_2: (-\varepsilon,\varepsilon) \rightarrow \mathbb{S}^{2n-1}$ through $a$ and $b$, respectively, satisfying $\dot\gamma_1(0) = \alpha$ and $\dot\gamma_2(0) = \beta$. Now $\gamma_1\gamma_2^\ast$ defines a curve in $S$, and $\frac{d}{dt}|_{t=0} \left(\gamma_1(t)\gamma_2(t)^\ast\right) = a\beta^\ast + \alpha b^\ast$. The space $\mathfrak{V}$ is parametrized in a complex linear way by $(\alpha, \Bar \beta) \mapsto a \Bar \beta^T + \alpha b^\ast$, where $(\alpha,\Bar \beta)$ lies in the $(m{+}n{-}2)$-dimensional complex subspace of $\C^{m+n}$ defined by $a^\ast \alpha = 0$, $\Bar \beta^T b = \beta^\ast b = 0$. To check that this map is indeed injective, test $\alpha b^\ast + a \beta^\ast$ from right and left with $b$ and $a^\ast$, respectively, to obtain $\alpha$ and $\Bar\beta$ again. Since the complex tangent space of $S$ has only $\dim_{\C} \Sigma - 1 = m+n-2$ dimensions, $T_{ab^\ast}^c S = \mathfrak{V}$ follows.

Consider a CR vector $\Bar L|_{ab^\ast} = \frac{1}{2}(X + iJX)$ for $X \in T^c_{ab^\ast}S$ and write $X = a\beta^\ast + \alpha b^\ast$. Then the holomorphic curve $\gamma(t) = (a+t\alpha)(b+\Bar t\beta)^\ast$ is tangential to $\Bar L|_{ab^\ast}$ at $t=0$. Observing that both $\lVert a + t \alpha \rVert^2= 1 + |t|^2 \lVert \alpha \rVert^2$ and $\lVert b + \Bar t \beta \rVert^2$ are constant to first order, we obtain
\begin{align*}
    \Bar L|_{ab^\ast} B &= \frac{d}{d\Bar t}\Big|_{t=0} B \circ \gamma(t) =
    \frac{d}{d\Bar t}\Big|_{t=0} \left(\mathbb{I}_m - \gamma(t)\gamma(t)^\ast \right)B_0\left(\mathbb{I}_n - \gamma(t)^\ast\gamma(t) \right)\\
    & = \frac{d}{d\Bar t}\Big|_{t=0} \left(\mathbb{I}_m - \lVert b + \Bar t \beta \rVert^2 (a+t\alpha)(a+t\alpha)^\ast\right)B_0\left(\mathbb{I}_n - \lVert a + t \alpha \rVert^2 (b+\Bar t\beta) (b+\Bar t\beta)^\ast\right) \\ &= -a\alpha^\ast B_0 - B_0 \beta b^\ast.
\end{align*}

Recall that the scalar product in $\C^{m\times n}$ may be written as $(A|B) = \mathrm{tr}(A^\ast B)$. By commuting matrices inside the trace we see that for any $Z$ in $T_{ab^\ast}\eta$, 
\begin{align*}
    \mathrm{tr}\left((\Bar L|_{ab^\ast} B)^\ast Z\right) &= -\mathrm{tr}(B_0^\ast\alpha a^\ast Z + b\beta^\ast B_0^\ast Z) 
    = -\mathrm{tr}(B_0^\ast\alpha a^\ast Z) - \mathrm{tr}(B_0^\ast Z b \beta^\ast ) = 0,
\end{align*}
since $a^\ast Z = Z b = 0$. This means that $R^S_{ab^\ast}(\Bar L|_{ab^\ast}, B_0) = -a\alpha^\ast B_0 - B_0 \beta b^\ast$, because the projection onto $T^\bot\eta$ is already taken care of, and $\Bar L|_{ab^\ast} \in \ker R^S_{ab^\ast}(\cdot,B_0)$ if and only if $a\alpha^\ast B_0 + B_0 \beta b^\ast = 0$. Testing this with $a^\ast$ and $b$ from left and right, respectively, we obtain $\alpha \in \ker B_0^\ast$ and $\beta \in \ker B_0$. Since $B_0^\ast a = 0$ and $B_0 b = 0$ already, both kernels have codimension at least one in $\left<a\right>^\bot$ and $\left<b\right>^\bot$, respectively, thus $\dim_{\C}\ker R^S_{ab^\ast}(\cdot,B_0) \leq m+n-4$, implying $\nu_{ab^\ast} = m+n-4$.
\end{proof}

\Cref{prop:d1} gives all dimensions where a statement this simple is meaningful and possible. If $M$ has more than $m+n-2$ positive Levi eigenvalues, there is no CR-transversal map from $M$ to $\MOne$, since, by \Cref{lem:crtransversal}, the target manifold would need to have at least as many positive Levi eigenvalues as the source. If $M$ has less than $m+n-3$ positive Levi eigenvalues, there are nowhere smooth CR-transversal CR maps into $\MOne$ of arbitrarily high regularity.

\begin{exa}
Let $\hat S$ be the strongly pseudoconvex hypersurface given by the $(m-1)\times(n-1)$ matrices of rank one and norm $1$. Then $\hat S$ has $m+n-4$ positive Levi eigenvalues. Take a $C^k$, but nowhere $\cinfty$-smooth CR function $\phi$ on $\hat S$ with $|\phi|<1$. Then $h(Z) = \begin{pmatrix} Z & 0 \\ 0 & \phi \end{pmatrix}$ gives a nowhere smooth CR-transversal CR map $h: \hat S \rightarrow \MOne$ of regularity $C^k$.
\end{exa}

\begin{proof}
Regularity is obvious from the component-wise definition. CR-transversality always holds for the graph map of a CR function, i.e. $h: M \rightarrow M \times \C$, $h(p) = (p,\phi(p))$, since $T^c (M\times \C) \cong T^c M \times \C$, $T(M \times \C) \cong TM \times \C$ and $\mathbb{P}_{TM} \circ h_\ast \cong \mathrm{id}$ together imply that any transversal vector $v \in TM \setminus T^cM$ maps into a transversal vector again. That $h(Z) \in \MOne$ follows from the singular value computations in the proof of \Cref{d1structure}.
\end{proof}

\subsubsection{Classical domains of the second kind}
These classical symmetric domains, denoted by $\DTwo$, $m \geq 2$, are given as the sets of \emph{skew symmetric} complex $m{\times}m$ matrices with norm less than $1$. Equivalently,
\begin{align*}
\DTwo = \left\{ Z \in \C^{m\times m}: Z^T = - Z, \mathbb{I}_m - Z^\ast Z > 0 \right\}.
\end{align*}

Every nonzero singular value of a skew symmetric matrix $Z$ occurs with even multiplicity. Suppose $u$ is a right singular vector corresponding to a singular value $\sigma$, which is equivalent to $Z^\ast Z u = \sigma^2 u$. Then $v := \sigma^{-1}\overline{Zu}$ is another right singular vector corresponding to $\sigma$, since it follows from $Z^\ast = \Bar Z^T = - \Bar Z$ that $Z^\ast Z v = - \sigma^{-1} \Bar Z Z \Bar Z \Bar u = - \sigma^{-1} \Bar Z \overline{\Bar Z Z u} = \sigma^{-1} \Bar Z \overline{Z^\ast Z u} = \sigma \Bar Z \Bar u = \sigma^2 v$, and $v^\ast v = \sigma^{-2}u^T Z^T \Bar Z \Bar u = \sigma^{-2}\overline{u^\ast Z^\ast Z  u} = 1$. Furthermore, $v$ and $u$ are orthogonal, and $u = -\sigma^{-1}\overline{Zv}$:
\begin{align*}
    \sigma u^\ast v &= u^\ast \Bar Z \Bar u = (u^\ast \Bar Z \Bar u)^T = u^\ast \Bar Z^T \Bar u = - u^\ast \Bar Z \Bar u \Rightarrow u^\ast v = 0, \\
    -\sigma^{-1} \overline{Z v} &= -\sigma^{-2} \overline{Z \Bar Z \Bar u} = - \sigma^{-2} \Bar Z Z u = \sigma^{-2} Z^\ast Z u = u.
\end{align*}

The boundary of $\DTwo$ is given by those skew symmetric matrices with norm $1$. It is a smooth manifold where exactly the largest two singular values are $1$. We will denote this smooth piece of the boundary by $\MTwo$. Let us postpone checking that $\MTwo$ is a manifold to the proof of \Cref{d2structure}.

\begin{prop}
\label{prop:d2}
Let $m \geq 4$ and $M$ be a pseudoconvex hypersurface with at least $n_+ \geq 2m-7$ positive Levi eigenvalues.
Then any CR-transversal CR map $h$ of regularity $C^{\frac{m(m-1)}{2}-n_+}$ from $M$ into $\MTwo$ is generically smooth.
\end{prop}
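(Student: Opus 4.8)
The plan is to follow the strategy used for $\MOne$. By the boundary orbit theorem for $\DTwo$ (compare \cite{wolf}, \cite{MN}), the automorphism group of $\DTwo$ acts transitively on $\MTwo$ through ambient biholomorphisms; since the smallest possible rank of a nonzero skew-symmetric matrix is $2$, the convenient model point is the rank-two matrix $J_0 := e_1 e_2^T - e_2 e_1^T$, which lies on $\MTwo$ because its singular values are $1,1,0,\dots,0$. Thus, assuming $h$ is nowhere smooth on a neighborhood of a point $p$, we may post-compose with a local ambient biholomorphism taking $h(p)$ to $J_0$ and $\MTwo$ into itself, and are reduced to checking the hypotheses of \Cref{cor:transversal} at $J_0$. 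Concretely, I claim that near $J_0$ the hypersurface $\MTwo$ carries a Levi foliation $\eta$ by $\binom{m-2}{2}$-dimensional complex (linear) submanifolds, that its Levi form has exactly $2m-4$ positive eigenvalues there, and that $\nu_{J_0} = 2m-8$. Granting this, since $2m-8 < 2m-7 \le n_+$ --- and $\binom{m}{2} - n_+ \ge 2$ whenever $n_+ \le 2m-4$, which we may assume, as otherwise there is no CR-transversal map into $\MTwo$ at all by \Cref{lem:crtransversal} --- \Cref{cor:transversal} shows $h$ is generically smooth, contradicting the choice of $p$ and proving \Cref{prop:d2}.

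For the structural claim I would invoke \Cref{lem:setup} with $\Sigma$ the variety of skew-symmetric $m \times m$ matrices of rank at most $2$; near the rank-two point $J_0$ this is a smooth complex manifold of dimension $\binom{m}{2} - \binom{m-2}{2} = 2m-3$. The delicate point is the holomorphic parametrization: the map $(\gamma_1,\gamma_2) \mapsto \gamma_1\gamma_2^T - \gamma_2\gamma_1^T$ has a three-dimensional $\mathrm{SL}(2,\C)$-stabilizer, and a careless normalization (e.g.\ taking $\gamma_2$ orthogonal to a fixed vector) collapses the transvection directions and yields a non-immersive parametrization. A correct slice is $\gamma_1 = e_1 + z$ with $z \bot e_1, e_2$ and $\gamma_2 = e_2 + w$ with $w \bot e_1$. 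With this choice, the identity $\lVert \gamma_1\gamma_2^T - \gamma_2\gamma_1^T\rVert^2 = \lVert\gamma_1\rVert^2\lVert\gamma_2\rVert^2 - |\gamma_1^*\gamma_2|^2$ produces an explicit defining equation for $S := \Sigma \cap \MTwo$ whose real Hessian at $J_0$ is positive definite, so $S$ is strongly pseudoconvex (indeed strongly convex) at $J_0$; \Cref{lem:setup} then certifies that the affine family $\eta_A = \{A_0 + B : B \text{ skew-symmetric and vanishing on the } 2\text{-plane spanned by the singular vectors of } A_0 \text{ for its unit singular-value pair}\}$ is the Levi foliation, with $\dim_\C \eta = \binom{m-2}{2}$ and hence with $\binom{m}{2} - 1 - \binom{m-2}{2} = 2m-4$ positive Levi eigenvalues.

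To compute $\nu_{J_0}$ I would imitate the proof of \Cref{d1structure}. Fix $0 \ne B_0 \in T_{J_0}\eta$, so $B_0$ is skew-symmetric with $B_0 e_1 = B_0 e_2 = 0$; the assignment $B(A) := (\mathbb{I}_m - AA^*)B_0(\mathbb{I}_m - A^*A)$ is a section of $T\eta$ along $S$ with $B(J_0) = B_0$, and it is skew-symmetric because $\mathbb{I}_m - AA^* = \overline{\mathbb{I}_m - A^*A}$ for skew-symmetric $A$. One identifies $T^c_{J_0}S = \{\alpha e_2^T - e_2\alpha^T + e_1\beta^T - \beta e_1^T : \alpha,\beta \in \langle e_1, e_2\rangle^\bot\}$, and for the CR vector $\bar L|_{J_0}$ corresponding to $(\alpha,\beta)$ uses the holomorphic curve $\gamma(t) = (e_1 + t\alpha)(e_2 + t\beta)^T - (e_2 + t\beta)(e_1 + t\alpha)^T$, which has $\gamma(0) = J_0$ and $\dot\gamma(0) = X := \alpha e_2^T - e_2\alpha^T + e_1\beta^T - \beta e_1^T$ (and, as in the rank-one case, need not stay in $S$, since $B(\cdot)$ is defined on all of $\C^{m\times m}$). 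A short computation then gives
\begin{align*}
\bar L|_{J_0} B = -J_0 X^* B_0 - B_0 X^* J_0 = e_1 (B_0 \bar\alpha)^T - (B_0\bar\alpha) e_1^T + e_2 (B_0\bar\beta)^T - (B_0\bar\beta) e_2^T,
\end{align*}
which already lies in $T^\bot_{J_0}\eta$, so it equals $R^S_{J_0}(\bar L|_{J_0}, B_0)$; testing this expression against $e_1$ and $e_2$ shows that $\bar L|_{J_0} \in \ker R^S_{J_0}(\cdot, B_0)$ precisely when $B_0 \bar\alpha = B_0\bar\beta = 0$. Hence $\dim_\C \ker R^S_{J_0}(\cdot, B_0) = 2 \dim_\C \ker(B_0|_{\langle e_1, e_2\rangle^\bot})$, and since a nonzero skew-symmetric matrix on an $(m-2)$-dimensional space has rank at least $2$, this is at most $2(m-4) = 2m-8$, with equality for a rank-two $B_0$. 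Thus $\nu_{J_0} = 2m-8$.

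The main obstacle will not be the final linear algebra, which is routine, but the preparatory geometry of the skew-symmetric model: one must choose the slice of $\Sigma$ so that it is genuinely transversal to $T_{J_0}\eta$ and cuts out a strongly pseudoconvex $S$ (a poor slice kills the $\mathrm{SL}(2,\C)$-transvection directions and breaks the parametrization), and one must confirm --- using the conjugation-twisted relations among singular vectors of a skew-symmetric matrix recorded before the statement of \Cref{prop:d2} --- that the affine family $\eta$ really is the Levi foliation and that $T_A\eta$ has the stated form uniformly near $J_0$. Everything downstream then parallels the treatment of $\MOne$.
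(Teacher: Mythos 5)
Your proposal is correct and follows essentially the same route as the paper: reduction to the rank-two model point via the boundary orbit theorem, the slice $\Sigma$ of rank-two skew-symmetric matrices with the same parametrization, the section $B(A)=(\mathbb{I}_m-AA^*)B_0(\mathbb{I}_m-A^*A)$, the same curve $\gamma(t)$, and the same computation giving $\nu_{J_0}=2m-8$ (your formula for $\bar L|_{J_0}B$ agrees with the paper's after using $B_0^T=-B_0$). The only cosmetic difference is that you verify strong pseudoconvexity of $S$ via an explicit defining function with positive definite Hessian in the slice coordinates, whereas the paper observes that the operator and Frobenius norms agree up to a constant on $\Sigma$, so that $S$ is the intersection of $\Sigma$ with a sphere; both arguments are valid.
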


Completely analogously to the situation of \Cref{prop:d1}, this follows from the boundary orbit theorem for $\DTwo$, which allows us to map each point in $\MTwo$ to $\pp := ab^T - ba^T$ for orthonormal $a,b \in \C^m$ by an automorphism of $\MTwo$, and from the following structural properties.

\begin{lem}
\label{d2structure}
Let $a, b \in \C^m$ be orthonormal vectors. Around $p' := ab^T - ba^T \in \MTwo$, the pseudoconvex hypersurface $\MTwo$ is foliated by $\frac{(m-2)(m-3)}{2}$-dimensional complex (linear) manifolds. Its Levi form has exactly $2m-4$ positive eigenvalues, and $\nu_{p'} = 2m-8$.
\end{lem}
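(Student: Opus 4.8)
The plan is to run the argument of \Cref{d1structure} in the skew-symmetric setting, using the singular value decomposition of skew matrices recalled above. First I would reduce to a normal form: a unitary $U$ acting by $Z \mapsto UZU^{T}$ preserves skew-symmetry and the operator norm, hence $\DTwo$ and $\MTwo$, and carries the foliation and $\nu$ to themselves, so I may assume $a = e_1$, $b = e_2$, i.e.\ $\pp = E_{12}-E_{21}$.

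Next I would verify that $\MTwo$ is a smooth hypersurface near $\pp$ (the point postponed from the discussion preceding the lemma). The naive candidate $\det(\mathbb{I}_m - Z^{\ast}Z)$ fails here, because the nonzero singular values of a skew matrix come in equal pairs, so it vanishes to second order along $\MTwo$; instead I would use $\mu(Z) := \lambda_1(ZZ^{\ast}) + \lambda_2(ZZ^{\ast})$, the sum of the two largest eigenvalues of $ZZ^{\ast}$. Near $\pp$ the top (double) singular value is separated from the rest so $\mu$ is smooth, on skew matrices $\mu(Z) = 2\lVert Z\rVert^{2}$, and $d\mu \neq 0$ at $\pp \neq 0$, so $\MTwo = \mu^{-1}(2)$ is a smooth real hypersurface, pseudoconvex as the boundary of the convex domain $\DTwo$. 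For the foliation I would use the relations between the two singular vectors for the singular value $1$ recalled just before the lemma: every $Z \in \MTwo$ near $\pp$ is uniquely $uv^{T} - vu^{T} + B$ with $u, v$ the right singular vectors for the singular value $1$ and $B$ skew with $Bu = Bv = 0$, $\lVert B\rVert < 1$. Hence the complex-linear affine planes $\eta_{uv^{T}-vu^{T}} := \{uv^{T} - vu^{T} + B : B \text{ skew},\ Bu = Bv = 0\}$, of dimension $\binom{m-2}{2} = \tfrac{(m-2)(m-3)}{2}$, furnish $\eta$, with $T_{uv^{T}-vu^{T}}\eta = \{B \text{ skew} : Bu = Bv = 0\}$.

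Then I would set up the slice of \Cref{lem:setup}. Take $\Sigma$ to be the $(2m{-}3)$-dimensional complex manifold of rank-$2$ skew matrices near $\pp$, parametrized holomorphically by $(z_3,\dots,z_m,w_2,\dots,w_m) \mapsto zw^{T} - wz^{T}$ with $z = e_1 + \sum_{j\geq 3}z_j e_j$ and $w = w_2 e_2 + \sum_{j\geq 3}w_j e_j$; a short computation gives $T_\pp\Sigma \oplus T_\pp\eta = T_\pp\C^{N'}$ for $N' = \binom{m}{2}$. Since both nonzero singular values of $zw^{T} - wz^{T}$ equal $\sqrt{\lVert z\rVert^{2}\lVert w\rVert^{2} - |z^{\ast}w|^{2}}$, the hypersurface $S := \Sigma \cap \MTwo$ has defining function $\rho = \lVert z\rVert^{2}\lVert w\rVert^{2} - |z^{\ast}w|^{2} - 1$; writing $w_2 = 1+s$ and expanding, $\rho = 2\Re s + |s|^{2} + \sum_{j\geq 3}(|z_j|^{2} + |w_j|^{2}) + O(3)$, which exhibits $S$ as strongly (in fact strongly convexly) pseudoconvex at $\pp$ with $\dim_{CR}S = 2m-4$. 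By \Cref{lem:setup}, $\MTwo$ is then uniformly pseudoconvex with Levi foliation $\eta$, its Levi form has $2m-4$ positive and $\binom{m}{2}-1-(2m-4) = \binom{m-2}{2}$ zero eigenvalues, and $\nu_\pp = \max_{0\neq B_0\in T_\pp\eta}\dim_\C\ker R^{S}_\pp(\cdot,B_0)$.

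The remaining, and most laborious, step is to compute $R^{S}_\pp$. For fixed nonzero $B_0 \in T_\pp\eta$ (a nonzero skew matrix supported in the last $m{-}2$ coordinates), $B(Z):= (\mathbb{I}_m - ZZ^{\ast})B_0(\mathbb{I}_m - Z^{\ast}Z)$ is a section of $T\eta$ along $S$ with $B(\pp)=B_0$, since $\mathbb{I}_m - Z^{\ast}Z$ kills the right singular $2$-plane of $Z$ and skew-symmetry is preserved. A CR vector of $S$ at $\pp$ is tangent to the holomorphic curve $\gamma(t) = (e_1 + t\zeta)(e_2 + t\omega)^{T} - (e_2+t\omega)(e_1+t\zeta)^{T}$ for some $\zeta,\omega \perp \mathrm{span}(e_1,e_2)$, and using that $\lVert e_1 + t\zeta\rVert$, $\lVert e_2+t\omega\rVert$ and $(e_1+t\zeta)^{\ast}(e_2+t\omega)$ are constant to first order one finds
\begin{align*}
\tfrac{d}{d\bar t}\big|_{t=0} B(\gamma(t)) = -e_1(\bar\zeta^{T} B_0) - e_2(\bar\omega^{T} B_0) - (B_0\bar\zeta)e_1^{T} - (B_0\bar\omega)e_2^{T}.
\end{align*}
A trace computation as in \Cref{d1structure} shows this already lies in $T^{\bot}_\pp\eta$, so it equals $R^{S}_\pp$ on that vector. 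Testing the vanishing of $R^{S}_\pp(\cdot,B_0)$ against $e_1^{\ast}, e_2^{\ast}$ on the left and $\bar e_1, \bar e_2$ on the right, and using $B_0^{\ast} = -\bar B_0$ (hence $\ker B_0^{\ast} = \overline{\ker B_0}$) for skew $B_0$, I expect to identify the kernel with the pairs $(\zeta,\omega)$, $\zeta,\omega \in \mathrm{span}(e_1,e_2)^{\bot}\cap\ker B_0^{\ast}$; since $\mathrm{span}(e_1,e_2)\subseteq\ker B_0^{\ast}$ this has dimension $2(m-2-\mathrm{rk}\,B_0)$. A nonzero skew matrix has rank at least $2$, and rank $2$ is attained (here $m\geq 4$), so $\nu_\pp = \max_{B_0}\dim_\C\ker R^{S}_\pp(\cdot,B_0) = 2(m-4) = 2m-8$, as claimed. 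The main obstacle is precisely this bookkeeping: skew-symmetry produces four terms in $\tfrac{d}{d\bar t}|_{0} B(\gamma(t))$ rather than two as in the $\MOne$ case, and one must check that the four a priori kernel conditions from left/right testing collapse to the single condition $\zeta,\omega\in\ker B_0^{\ast}$; establishing smoothness of $\MTwo$ is a secondary technical point, needed only because the pairing of singular values breaks the direct transcription of the $\MOne$ argument.
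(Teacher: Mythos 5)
Your proposal is correct and follows essentially the same route as the paper's proof: the same slice $\Sigma$ of rank-two skew matrices and parametrization, the same section $B(Z)=(\mathbb{I}_m-ZZ^\ast)B_0(\mathbb{I}_m-Z^\ast Z)$, the same curve $\gamma(t)$ giving the four-term formula for $\bar L|_{\pp}B$, and the same kernel analysis reducing to $\zeta,\omega\in\ker\bar B_0$ with dimension $2(m-2-\mathrm{rk}\,B_0)$, maximized at rank two (you are in fact slightly more explicit than the paper about needing $m\geq 4$ to attain the maximum). The only deviations are harmless alternatives — you certify smoothness of $\MTwo$ via the sum of the two largest eigenvalues of $ZZ^\ast$ and strong pseudoconvexity of $S$ via the explicit defining function $\lVert z\rVert^2\lVert w\rVert^2-|z^\ast w|^2-1$, where the paper instead uses the vector-bundle structure over $S$ and the proportionality of the Frobenius and operator norms on $\Sigma$ — together with one transcription slip: the leaf condition should read $B\bar u=B\bar v=0$ (equivalently $u^\ast B=v^\ast B=0$), not $Bu=Bv=0$, which is immaterial at the real base point $u=e_1$, $v=e_2$ but matters for the foliation nearby.
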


\begin{proof}
As the intersection of the linear subspace of skew symmetric matrices with the convex matrix norm unit ball, $\DTwo$ is convex and $\MTwo$ is a pseudoconvex hypersurface.

The set $\Sigma$ of skew symmetric matrices of rank two is a $(2m{-}3)$-dimensional complex manifold around $p'$. In coordinates where $a = (1,0,\dots,0)^T$ and $b=(0,1,0,\dots,0)^T$, it is parametrized around $p'$ by 
\begin{align*}
    (z_3,...,z_m,w_2,...,w_m) \mapsto (1,0,z_3,...,z_m)^T(0,w_2,...,w_m) - (0,w_2,...,w_m)^T(1,0,z_3,...,z_m).
\end{align*}
To check surjectivity, let $\Bar u$ and $\Bar v$ be two right singular vectors corresponding to the only nonzero singular value $\sigma$, chosen such that $Z\Bar u = -\sigma v$ and $Z \Bar v = \sigma u$. Then $Z = u(\sigma v)^T - (\sigma v)u^T$. Since $a^\ast(ab^T-ba^T)\Bar b = 1$, $a^\ast Z \Bar b \neq 0$ near $p'$, implying that at least one of $a^\ast u$ or $a^\ast v$ is nonzero. By substituting $(-v,u)$ for $(u,v)$ if necessary, we can arrange $a^\ast u \neq 0$. Let $\tilde u = u$, $\tilde v = \sigma (v - \frac{a^\ast v}{a^\ast u} u)$, then $a^\ast \tilde v = 0$ and $Z = \tilde u \tilde v^T - \tilde v \tilde u^T$. Note that $a^\ast Z \Bar b \neq 0$ now implies $b^\ast \tilde v \neq 0$. Let $z = \frac{1}{a^\ast \tilde u} \tilde u - \frac{b^\ast \tilde u}{(a^\ast \tilde u)(b^\ast \tilde v)} \tilde v$ and $w = (a^\ast \tilde u) \tilde v$. Then we have $a^\ast z = 1$, $b^\ast z = 0$, $a^\ast w = 0$ and $Z = zw^T - wz^T$, proving that $Z$ is in the range of our parametrization. To check that it is an immersion, it suffices to calculate $\frac{\partial}{\partial z_j}(zw^T-wz^T) = e_je_2^T - e_2e_j^T$, $3 \leq j \leq m$ and $\frac{\partial}{\partial w_k}(zw^T-wz^T) = e_1e_k^T - e_ke_1^T$, $2 \leq k \leq m$, since these are evidently $\C$-linearly independent matrices.

The set $S = \Sigma \cap \MTwo$ of skew symmetric rank two matrices with norm $1$ is a strictly pseudoconvex hypersurface in $\Sigma$. To show this, first note that for orthogonal vectors $\alpha, \beta \in \C^m$, we have 
\begin{align*}
    \lVert \alpha \beta^T - \beta \alpha^T \rVert^2 &= \lVert \left( \alpha \beta^T - \beta \alpha^T\right)^\ast\left(\alpha \beta^T - \beta \alpha^T \right)\rVert = \lVert \lVert \alpha\rVert^2 \Bar \beta\beta^T + \lVert \beta \rVert^2 \Bar \alpha\alpha ^T\rVert \\ 
    &= \lVert \alpha \rVert^2 \lVert \beta \rVert^2 \lVert \mathrm{diag}(1,1,0,\dots,0)\rVert = \lVert \alpha \rVert^2 \lVert \beta \rVert^2.
\end{align*}

The standard Euclidean scalar product on $\C^{m\times m}$ coincides with the Frobenius scalar product $(A|B) = \mathrm{tr}(A^\ast B)$. For a matrix $Z = \alpha \beta^T - \beta \alpha^T$ with orthogonal $\alpha, \beta \in \C^m$, the Frobenius norm works out to 
\begin{align*}
    \sqrt{\mathrm{tr}(Z^\ast Z)} = \sqrt{\mathrm{tr}(\lVert \alpha \rVert^2 \Bar \beta \beta^T + \lVert \beta \rVert^2 \Bar \alpha \alpha^T)} = \sqrt{2}\lVert \alpha \rVert \lVert \beta \rVert.
\end{align*}
Therefore, the Frobenius norm and the matrix norm agree up to a constant on $\Sigma$, and $S = \Sigma \cap \MTwo = \Sigma \cap \sqrt{2}\mathbb{S}^{2m^2-1}$ is strongly pseudoconvex, as it is given by the intersection of a complex manifold with a strongly convex hypersurface.

The singular value decomposition expresses $Z \in \MTwo$ as $uv^T - vu^T + B$, where $\Bar u$ and $\Bar v$ are right singular vectors corresponding to the double singular value $1$ satisfying $Z \Bar u = -v$ and $Z \Bar v = u$, and $B$ satisfies $B \Bar u = B \Bar v = 0$, $u^\ast B = v^\ast B = 0$ and $\lVert B \rVert < 1$. By linearity, we have $B^T = -B$, implying that $B \Bar u = B \Bar v = 0$ and $u^\ast B = v^\ast B = 0$ are equivalent. In coordinates where $u=(1,0,0,\dots,0)$ and $v = (0,1,0,\dots,0)$, the conditions $B=-B^T$ and $B \Bar u = B \Bar v = 0$ simply mean that $B$ is a skew symmetric matrix with the first two rows and columns empty. We conclude that the affine planes $\eta_{uv^T-vu^T} = uv^T-vu^T + \{B \in \C^{m\times m}: B=-B^T, B\Bar u = B\Bar v = 0\}$ for $uv^T-vu^T \in S$ provide a foliation of $\MTwo$ by $\frac{(m-2)(m-3)}{2}$-dimensional complex manifolds, and that $\MTwo$, as an embedded piece of a vector bundle over $S$, is indeed a manifold.

The complex tangent space $T_\pp^cS$ at $\pp = ab^T - ba^T$ will be given by the complex vector space $\mathfrak{V} := \{a\beta^T - \beta a^T + \alpha b^T - b\alpha^T: \alpha,\beta \in \left< a,b\right>^\bot \subset \C^m \}$. To show tangency, consider the complex curve $\gamma(t) = (a+t\alpha)(b+t\beta)^T - (b+t\beta)(a+t\alpha)^T$, with tangent vector $\gamma_t(0) = a\beta^T - \beta a^T + \alpha b^T - b\alpha^T$. It is contained in $\Sigma$ and tangential to $\MTwo$, the latter because $\lVert \gamma(t) \rVert^2 = \lVert a + t\alpha \rVert^2\lVert b + t\beta \rVert^2 - \left| (a+t\alpha)^\ast(b+t\beta)\right|^2 = \lVert a \rVert^2 \lVert b \rVert^2 + \mathcal{O}(|t|^2)$, hence $\gamma_t(0) \in T_\pp^cS$. Since $\mathfrak{V}$ is isomorphic to $\left< a,b\right>^\bot$ by the map $\gamma_t(0) \mapsto \left( \gamma_t(0)\Bar b, -\gamma_t(0) \bar a\right)$, it has $2m-4 = \dim_{CR}S$ dimensions, and $T_\pp^cS = \mathfrak{V}$.

Given $B_0 \in T_\pp\eta$, the map $B(Z) = (\mathbb{I}_m-ZZ^\ast)B_0(\mathbb{I}_m-Z^\ast Z)$ again provides a section of $T\eta$ along $S$, since for orthonormal $u,v \in \C^m$,
\begin{align*}
    B(uv^T-vu^T) &= (\mathbb{I}_m-uu^\ast-vv^\ast)B_0(\mathbb{I}_m-\Bar uu^T-\Bar vv^T) = -B(uv^T-vu^T)^T \\
    B(uv^T-vu^T)\Bar u &= (\mathbb{I}_m-uu^\ast-vv^\ast)B_0(\Bar u - \Bar u) = 0, \text{ and } B(uv^T-vu^T)\Bar v = 0.
\end{align*}
Taking a CR vector $\Bar L|_\pp \in T_\pp^{0,1} S$ with real part $\frac{1}{2}\left(a\beta^T - \beta a^T + \alpha b^T - b\alpha^T\right)$ and the curve $\gamma(t) = (a+t\alpha)(b+t\beta)^T-(b+t\beta)(a+t\alpha)^T$, we first obtain
\begin{align*}
    \gamma(t)\gamma(t)^\ast &= \lVert a+t\alpha \rVert^2(b+t\beta)(b+t\beta)^\ast + \lVert b+t\beta \rVert^2(a+t\alpha)(a+t\alpha)^\ast \\
    &- t\Bar t(\beta^T\Bar\alpha)(a+t\alpha)(b+t\beta)^\ast - t \Bar t (\alpha^T \Bar\beta)(b+t\beta)(a+t\alpha)^\ast \\
    &= (b+t\beta)(b+t\beta)^\ast + (a+t\alpha)(a+t\alpha)^\ast + \mathcal{O}(|t|^2), \\
    \gamma(t)^\ast\gamma(t) &= \overline{(b+t\beta)}(b+t\beta)^T + \overline{(a+t\alpha)}(a+t\alpha)^T + \mathcal{O}(|t|^2),
\end{align*}
which simplifies the calculations for $R_\pp^S$ significantly. We obtain
\begin{align*}
    \Bar L|_\pp B &= \frac{d}{d\Bar t}\Big|_{t=0} B \circ \gamma(t) = \frac{d}{d\Bar t}\Big|_{t=0} \left(\mathbb{I}_m - \gamma(t)\gamma(t)^\ast \right)B_0\left(\mathbb{I}_m - \gamma(t)^\ast\gamma(t) \right) \\
    &= \frac{d}{d\Bar t}\Big|_{t=0} \Big(\big(\mathbb{I}_m - (b+t\beta)(b+t\beta)^\ast - (a+t\alpha)(a+t\alpha)^\ast\big)B_0 \\
    &\cdot \big(\mathbb{I}_m - \overline{(b+t\beta)}(b+t\beta)^T - \overline{(a+t\alpha)}(a+t\alpha)^T)\big) + \mathcal{O}(|t|^2)\Big) \\
    &= - b \beta^\ast B_0 - a \alpha^\ast B_0 - B_0 \Bar \beta b^T - B_0 \Bar \alpha a^T.
\end{align*}
By the same calculations as in the proof of \Cref{d1structure}, we find that this already gives $R_\pp^S(\Bar L|_\pp,B_0) = - b \beta^\ast B_0 - a \alpha^\ast B_0 - B_0 \Bar \beta b^T - B_0 \Bar \alpha a^T$, and that $\Bar L|_\pp \in \ker R_\pp^S(\cdot,B_0)$ if and only if $\alpha, \beta \in \ker \Bar B_0$. As a nonzero skew symmetric matrix, $\Bar B_0$ has at least two nonzero singular values, hence $\textrm{codim}_\C \ker\Bar B_0 \geq 2$. Since $\Bar B_0 a = \Bar B_0 b = 0$, and $\alpha, \beta \in \left< a, b\right>^\bot$, we obtain $\textrm{codim}_\C\ker R^S_\pp(\cdot,B_0) \geq 4$ and thus $\nu_\pp = 2m-8$.
\end{proof}

As in \Cref{prop:d1}, there are counterexamples to regularity if $M$ has exactly $2m-8$ positive Levi eigenvalues.

\begin{exa}
Let $\hat S \subset M_{II}^{m-2}$ be the strongly pseudoconvex hypersurface of antisymmetric $(m-2)\times(m-2)$ matrices of rank two and norm $1$. It has $2m-8$ positive Levi eigenvalues. Given a $C^k$-smooth, but nowhere $\cinfty$-smooth CR function $\phi$ on $\hat S$ strictly bounded by $1$, the map $h:\hat S \rightarrow \MTwo$ given by
\begin{align*}
    h(Z) = \begin{pmatrix}
        Z & 0 & 0 \\
        0 & 0 & -\phi \\
        0 & \phi & 0
        \end{pmatrix}
\end{align*}
is a $C^k$-smooth, but nowhere $\cinfty$-smooth CR-transversal CR function.
\end{exa}

\subsubsection{Classical domains of the third kind}
Domains of the third kind $\DThree$ are given by the sets of \emph{symmetric} complex $m{\times}m$ matrices with norm less than $1$. Equivalently,
\begin{align*}
    \DThree = \left\{ Z \in \C^{m\times m}: Z^T = Z, \mathbb{I}_m - Z^\ast Z > 0 \right\}.
\end{align*}
Here the regularity result obtained from \Cref{cor:transversal} only holds for $M \subset \C^m$.

\begin{prop}
\label{prop:d3}
Let $m \hspace{2pt} {\geq} \hspace{2pt} 2$ and $M$ be a pseudoconvex hypersurface with at at least $n_+ \geq m-1$ positive Levi eigenvalues.
Then every CR-transversal CR map $h$ of regularity $C^{\frac{m(m+1)}{2}-n_+}$ from $M$ into $\MThree$ is generically smooth.
\end{prop}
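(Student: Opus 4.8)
The plan is to argue exactly as for \Cref{prop:d1} and \Cref{prop:d2}, in two steps. Suppose for contradiction that a CR-transversal CR map $h$ of the stated regularity were nowhere $\cinfty$ on a neighbourhood $O \subset M$ of a point $p$. By the boundary orbit theorem for $\DThree$ — the automorphism group of $\DThree$ acts transitively on $\MThree$ by ambient biholomorphisms of $\C^{m(m+1)/2}$ — there is an automorphism $F$ of $\MThree$ mapping $h(p)$ to the rank-one symmetric matrix $\pp := aa^T$ for some unit vector $a \in \C^m$, and $\tilde h := F \circ h$ is again a nowhere smooth, CR-transversal CR map sending $p$ to $\pp$. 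It therefore suffices to establish a structural lemma at $\pp$ — that near $\pp$ the hypersurface $\MThree$ is foliated by complex linear submanifolds of dimension $\tfrac{(m-1)m}{2}$, that its Levi form has exactly $m-1$ positive eigenvalues, and that $\nu_\pp = m-2$ — and then invoke \Cref{cor:transversal}: since $\nu$ is a biholomorphic invariant and $\mathrm{Aut}(\DThree)$ acts transitively on $\MThree$, $\nu \equiv m-2$ on $\MThree$, so $\nu_{\tilde h(q)} = m-2 < m-1 \le n_+$ on all of $O$, forcing $\tilde h$ to be generically smooth, a contradiction. (As for $D_I^{m,n}$ and $\DTwo$, the hypothesis $n_+ \ge m-1$ together with \Cref{lem:crtransversal} — $\MThree$ carries only $m-1$ positive Levi eigenvalues, hence admits no CR-transversal source with more — pins down $n_+ = m-1$, so $M$ is strongly pseudoconvex; this is the case $M \subset \C^m$.)

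For the structural lemma I would use the Takagi normal form for complex symmetric matrices in place of the ordinary singular value decomposition. Near $\pp$, every $Z \in \MThree$ decomposes uniquely as $Z = uu^T + B$ with $u$ a unit vector satisfying $Z\Bar u = u$ (the Takagi vector for the simple top singular value $1$), $B = B^T$, $B\Bar u = 0$ and $\lVert B\rVert < 1$; the leaves are the affine planes $\eta_{uu^T} := uu^T + \{B \in \C^{m\times m} : B = B^T,\ B\Bar u = 0\}$, which in coordinates with $u = e_1$ are symmetric matrices with vanishing first row and column, hence complex linear of dimension $\tfrac{(m-1)m}{2}$, so $\MThree$ is an embedded piece of a vector bundle over its base $S$ and in particular a manifold. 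To reach the situation of \Cref{lem:setup}, take $\Sigma := \{vv^T : v \in \C^m\}$, the rank-$\le 1$ symmetric matrices, an $m$-dimensional complex manifold through $\pp$ with $T_\pp\Sigma$ complementary to $T_\pp\eta$ (one checks $v \mapsto vv^T$ is an immersion near $a$). On $\Sigma$ the operator norm agrees with the Euclidean (Frobenius) norm $\sqrt{\operatorname{tr}(Z^\ast Z)}$, both equal to $\lVert v\rVert^2$, so $S := \Sigma \cap \MThree$ is the intersection of a complex manifold with a strongly convex sphere and is therefore strongly pseudoconvex, with $\dim_{CR} S = m-1$. It remains to compute $R^S_\pp$. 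As in the earlier two proofs, $B(Z) := (\mathbb{I}_m - ZZ^\ast) B_0 (\mathbb{I}_m - Z^\ast Z)$ is a section of $T\eta$ along $S$ with $B(\pp) = B_0$ — symmetry of $B(Z)$ uses $B_0 = B_0^T$ — and differentiating $B$ along the holomorphic curve $\gamma(t) = (a + t\alpha)(a + t\alpha)^T$ with $\alpha \in \langle a\rangle^\bot$, which is tangent to $\MThree$ to second order since $\lVert a + t\alpha\rVert^2 = 1 + |t|^2\lVert\alpha\rVert^2$, gives $R^S_\pp(\Bar L|_\pp, B_0) = -a\alpha^\ast B_0 - B_0 \Bar\alpha a^T$, where $\Bar L|_\pp \in T^{0,1}_\pp S$ is the CR vector corresponding to $\alpha$. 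Multiplying this identity by $a^\ast$ on the left and using $B_0 = B_0^T$ shows $\Bar L|_\pp \in \ker R^S_\pp(\cdot, B_0)$ iff $\alpha \in \ker \Bar B_0$; since $B_0$ is a nonzero symmetric matrix it has rank at least one, so $\ker \Bar B_0 \cap \langle a\rangle^\bot$ has codimension at least one in $\langle a\rangle^\bot$, giving $\dim_\C \ker R^S_\pp(\cdot, B_0) \le m-2$, with equality for $B_0 = ww^T$, $w \perp a$. Hence $\nu_\pp = m-2$.

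The only genuinely new difficulty compared with \Cref{prop:d1} and \Cref{prop:d2} is the bookkeeping forced by the Takagi decomposition: transposes replace conjugate transposes throughout (the defining relation is $Z\Bar u = \sigma u$, not a relation involving $Zu$), so one must recheck that $B(Z)$, the curve $\gamma$, and the complex tangent space $T^c_\pp S = \{a\alpha^T + \alpha a^T : \alpha \in \langle a\rangle^\bot\}$ all remain inside the symmetric matrices and have the asserted tangency to $\MThree$, and one must verify — as for $D_I^{m,n}$, through the simplicity of the largest root of $\lambda \mapsto \det(\lambda\mathbb{I}_m - ZZ^\ast)$ — that the locus of symmetric matrices whose top singular value equals $1$ and is simple is exactly the smooth part of $\partial\DThree$, with $\rho(Z) = \det(\mathbb{I}_m - Z^\ast Z)$ (or its symmetric analogue) serving as a defining function. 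None of these steps is hard; once the formula $R^S_\pp(\Bar L|_\pp, B_0) = -a\alpha^\ast B_0 - B_0 \Bar\alpha a^T$ is in hand, the dimension count and the appeal to \Cref{cor:transversal} are immediate.
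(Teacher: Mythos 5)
Your proposal is correct and follows essentially the same route as the paper: boundary orbit theorem to reduce to the rank-one point $aa^T$, the decomposition $Z = ww^T + B$ giving the foliation by $\tfrac{m(m-1)}{2}$-dimensional affine planes, the section $B(Z) = (\mathbb{I}_m - ZZ^\ast)B_0(\mathbb{I}_m - Z^\ast Z)$ and the curve $(a+t\alpha)(a+t\alpha)^T$ yielding $R^S_{aa^T}(\Bar L, B_0) = -a\alpha^\ast B_0 - B_0\Bar\alpha a^T$, hence $\nu_{aa^T} = m-2$ and the conclusion via \Cref{cor:transversal}. The only cosmetic differences are that you invoke the Takagi form directly where the paper derives the symmetric decomposition from the ordinary SVD (via $\lambda u = \Bar v$), and you check strong pseudoconvexity of $S$ by comparing operator and Frobenius norms where the paper exhibits $S$ as a double cover of $\mathbb{S}^{2m-1}$.
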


Let us note in passing that a nontrivial CR transversal CR map from $M$ into $\MThree$ can only exist if the number of positive Levi eigenvalues of $M$ does not exceed $m-1$, thus this result only truly concerns uniformly pseudoconvex hypersurfaces.

\Cref{prop:d3} is a consequence of the boundary orbit theorem for $\DThree$, which tells us that every point $Z \in \MThree$ may be mapped to $aa^T$ for a unit vector $a \in \C^m$ by an ambient biholomorphism mapping $\MThree$ into itself. Almost completely analogously to the case of $\MOne$, the following structural properties hold.

\begin{lem}
\label{d3structure}
Let $a \in \C^m$ be a unit vector. Around $aa^T \in \MThree$, the pseudoconvex hypersurface $\MThree$ is foliated by $\frac{m(m-1)}{2}$-dimensional complex (linear) manifolds. Its Levi form has exactly $m-1$ positive eigenvalues, and $\nu_{aa^T} = m-2$.
\end{lem}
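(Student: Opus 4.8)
The plan is to follow the template of the proofs of \Cref{d1structure} and \Cref{d2structure}, replacing the ordinary singular value decomposition by the Autonne--Takagi factorization of complex symmetric matrices. First I would note that $\DThree$, being the intersection of the linear subspace of symmetric matrices with the convex operator-norm unit ball, is convex, so $\MThree$ is a pseudoconvex hypersurface. Then I would let $\Sigma \subset \C^{m\times m}$ be the set of symmetric matrices of rank exactly one: in coordinates with $a = (1,0,\dots,0)^T$ it is the image of $z \mapsto zz^T$, and one recovers $z$ from $zz^T$ up to a sign fixed by requiring $z_1$ near $1$, so this map is an injective immersion near $a$ and $\Sigma$ is an $m$-dimensional complex submanifold through $aa^T$. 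Consequently $\dim_\C \MThree = \frac{m(m+1)}{2} - 1$.

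Next I would show $S := \Sigma \cap \MThree$ is strongly pseudoconvex: since $\lVert zz^T\rVert = \lVert z\rVert^2$ and likewise the Frobenius norm of $zz^T$ equals $\lVert z\rVert^2$, in the $z$-coordinate $S$ is just the unit sphere, hence strongly convex, with $\dim_{CR} S = \dim_\C \Sigma - 1 = m - 1$. For the foliation, the Takagi factorization writes every $Z \in \MThree$ near $aa^T$ uniquely as $Z = uu^T + B$, where $u \in \C^m$ is a unit Takagi vector for the simple Takagi value $1$ (equivalently $Z\Bar u = u$, $\lVert u\rVert = 1$) and $B = B^T$ satisfies $B\Bar u = 0$ and $\lVert B\rVert < 1$; conversely every such matrix lies in $\MThree$. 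Hence the affine planes $\eta_{uu^T} := uu^T + \{B \in \C^{m\times m}: B = B^T,\ B\Bar u = 0\}$ for $uu^T \in S$ foliate $\MThree$ near $aa^T$ by complex linear manifolds of dimension $\frac{m(m-1)}{2}$, exhibiting $\MThree$ as an embedded subset of a vector bundle over $S$, hence a manifold; this places us in the situation of \Cref{lem:setup}, so the Levi form of $\MThree$ has exactly $\frac{m(m-1)}{2}$ zero and $m-1$ positive eigenvalues near $aa^T$, and $T_{uu^T}\eta = \{B = B^T: B\Bar u = 0\}$.

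It remains to compute $R^S$ at $aa^T$. I would identify $T^c_{aa^T}S = \mathfrak{V} := \{a\alpha^T + \alpha a^T : \alpha \in \langle a\rangle^\bot \subset \C^m\}$ exactly as in \Cref{d1structure} (tangency via curves of the form $\gamma_1(t)\gamma_1(t)^T$ on the sphere, then a dimension count), and for $B_0 \in T_{aa^T}\eta$ I would use the section $B(Z) := (\mathbb{I}_m - ZZ^\ast)B_0(\mathbb{I}_m - Z^\ast Z)$ of $T\eta$ along $S$, checking that $B(uu^T)$ is symmetric, annihilates $\Bar u$, and satisfies $B(aa^T) = B_0$ (using $B_0\Bar a = 0$, hence $a^\ast B_0 = 0$). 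Differentiating $B$ along the \emph{holomorphic} curve $\gamma(t) = (a+t\alpha)(a+t\alpha)^T$ — which lies in $\Sigma$ and is tangential to second order to $\MThree$ because $\lVert \gamma(t)\rVert = 1 + |t|^2\lVert\alpha\rVert^2$ — I expect to obtain
\[
\Bar L|_{aa^T} B = -a\alpha^\ast B_0 - B_0\Bar\alpha a^T,
\]
where the conjugate $\Bar\alpha$ (in contrast to \Cref{d1structure}) appears precisely because the curve is holomorphic in both factors. A trace computation as in \Cref{d1structure}, using $a^\ast Z = Z\Bar a = 0$ for $Z \in T_{aa^T}\eta$, shows this matrix is already $T^\bot\eta$-valued, so $R^S_{aa^T}(\Bar L|_{aa^T}, B_0) = -a\alpha^\ast B_0 - B_0\Bar\alpha a^T$; testing the equation $a\alpha^\ast B_0 + B_0\Bar\alpha a^T = 0$ against $a^\ast$ from the left then shows $\Bar L|_{aa^T} \in \ker R^S_{aa^T}(\cdot, B_0)$ if and only if $B_0\Bar\alpha = 0$. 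Since $\alpha \in \langle a\rangle^\bot$ translates to $\Bar\alpha \in \langle \Bar a\rangle^\bot$, the kernel is isomorphic to $\ker B_0 \cap \langle \Bar a\rangle^\bot$; because a nonzero complex symmetric $B_0$ with $B_0\Bar a = 0$ has $\Bar a \in \ker B_0 \setminus \langle\Bar a\rangle^\bot$ and $\mathrm{rk}\,B_0 \geq 1$, this intersection has dimension $\le m-2$, with equality for rank-one $B_0 = cc^T$, $c \perp a$. Hence $\max_{0\neq B_0}\dim_\C \ker R^S_{aa^T}(\cdot, B_0) = m - 2$, and \Cref{lem:setup} gives $\nu_{aa^T} = m - 2$. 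The only real obstacle I anticipate is bookkeeping: keeping the Takagi factorization, the conjugations ($\Bar u$, $\Bar\alpha$) and the symmetry constraints $B = B^T$ consistent so that $B(Z)$ genuinely lands in $T\eta$ and the trace identities collapse as claimed; conceptually nothing beyond the $\MOne$ argument is needed, apart from a little care that $\Sigma$ is a manifold (the $2:1$ sign ambiguity of $z\mapsto zz^T$) and that the operator and Frobenius norms agree on rank-one symmetric matrices so that $S$ is a sphere.
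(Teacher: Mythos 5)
Your proposal is correct and follows essentially the same route as the paper: the same rank-one manifold $\Sigma$ parametrized by $z\mapsto zz^T$, the same sphere argument for strong pseudoconvexity of $S$, the same affine foliation $\eta_{uu^T}=uu^T+\{B=B^T,\ B\bar u=0\}$, the same section $B(Z)=(\mathbb{I}_m-ZZ^\ast)B_0(\mathbb{I}_m-Z^\ast Z)$, and the same formula $R^S_{aa^T}(\bar L,B_0)=-a\alpha^\ast B_0-B_0\bar\alpha a^T$ leading to $\nu_{aa^T}=m-2$. The only cosmetic difference is that you invoke the Takagi factorization by name, whereas the paper derives the identical decomposition $Z=ww^T+B$ from the ordinary singular value decomposition by observing that the left and right singular vectors of a symmetric matrix coincide up to conjugation and a phase.
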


\begin{proof}
As the intersection of the convex set of matrices of norm less than $1$ with the linear subspace of symmetric matrices, $\DThree$ is convex, and thus $\MThree$ is pseudoconvex.

Let $\Sigma$ be the $m$-dimensional complex manifold of symmetric matrices of rank $1$. Near $aa^T$, it is parametrized by $z \mapsto zz^T$ for $z \in \C^m$ with $\Re(a^\ast z) > 0$. To check bijectivity, write $Z = \sigma uv^\ast$ for singular vectors $u,v \in \C^m$ and the nonzero singular value $\sigma$. Since $u$ and $v$ lie in the one-dimensional kernels of $ZZ^\ast - \sigma^2\mathbb{I}_m = Z\Bar Z - \sigma^2\mathbb{I}_m $ and $Z^\ast Z - \sigma^2\mathbb{I}_m = \Bar Z Z - \sigma^2\mathbb{I}_m$, respectively, we infer by Cramer's rule that $\lambda u = \Bar v$ for some $\lambda \in \mathbb{S}^1$. Letting $z := \sigma^{\frac{1}{2}}\lambda^{-\frac{1}{2}} \Bar v = \sigma^{\frac{1}{2}}\lambda^\frac{1}{2} u$, we find that $Z = zz^T$. The only indeterminacy here - the choice of sign for the root $\lambda^\frac{1}{2}$ - is fixed by requiring $\Re(z^\ast a) > 0$.

The real hypersurface $S \subset \Sigma$ of rank one matrices with norm $1$ is strongly pseudoconvex. Indeed, as $\lVert zz^T \rVert = \lVert z \rVert^2$, we have that $z \in \mathbb{S}^{2m-1}$ iff $zz^T \in S$, and the map $z \mapsto zz^T$ provides a holomorphic double cover of $S$ by $\mathbb{S}^{2m-1}$, showing that $S \cong \R P^{2m-1}$.

 The complex affine planes $\eta_{ww^T} := \{ww^T + B: B \Bar w = 0, B^T = B\}$ for $w \in \mathbb{S}^{2m-1}$ provide a foliation of $\MThree$ near $aa^T$. As in the proof of \Cref{d1structure}, the singular value decomposition expresses $Z \in \MThree$ as $uv^\ast + B$, where $u$, $v$ are unit vectors (unique up to simultaneous multiplication by $\lambda \in \mathbb{S}^1$), and $B$ satisfies $B^\ast u = B v = 0$ and $\lVert B \rVert < 1$. Since as before, $u$ and $v$ lie in the one-dimensional kernels of $Z\Bar Z - \mathbb{I}_m$ and $\Bar Z Z - \mathbb{I}_m$, respectively, we may express $Z = ww^T + B$ for $w \in \mathbb{S}^{2m-1}$, implying $B^T = B$ by linearity. The condition $B v = B^\ast u = 0$ simplifies to $B \Bar w = 0$. In coordinates where $w = (1,0,\dots,0)^T \in \C^m$, $B \Bar w = 0$ just means that the first column is empty, a condition that is clearly linearly independent of $B^T = B$. Therefore, the space defined by $B^T = B$, $B \Bar w = 0$ is a complex vector space of $\frac{m(m-1)}{2}$ dimensions for $w$ near $a$.

Given $B_0 \in T_{aa^T} \eta$, we prove that $B(Z) = (\mathbb{I}_m - ZZ^\ast)B_0(\mathbb{I}_m - Z^\ast Z)$ provides a section of $T\eta$ along $S$. For $ww^T \in S$,
\begin{align*}
    B(ww^T)\Bar w &= (\mathbb{I}_m - ww^\ast)B_0(\mathbb{I}_m - \Bar w w^T) \Bar w = (\mathbb{I}_m - ww^\ast)B_0(\Bar w - \Bar w) = 0, \\
    B(ww^T)^T &= (\mathbb{I}_m - (\Bar w w^T)^T)B_0^T(\mathbb{I}_m - (ww^\ast)^T) = B(ww^T) \text{\ and}\\
    B(aa^T) &= (\mathbb{I}_m - aa^\ast)B_0(\mathbb{I}_m - \Bar a a^T) = B_0.
\end{align*}
 Consider a CR vector $\Bar L|_{aa^T} \in T_{aa^T}^{0,1} S$. Complex tangent vectors $\alpha \in T_a^c\mathbb{S}^{2m-1}$ are characterized by $\alpha^\ast a = 0$. Since $z \mapsto z z^T$ is holomorphic and onto, we can just plug a suitable complex tangent $t \mapsto a + t\alpha$ into this map to obtain a curve $\gamma(t) = (a + t\alpha)(a + t\alpha)^T$ such that $\Bar L|_{aa^T} = \gamma_\ast \frac{d}{d\Bar t}|_{t=0}$. Then, after rewriting $\gamma(t) = (a + t\alpha)(\Bar a + \Bar t \Bar\alpha)^\ast$, we obtain by the exact same calculation as in \Cref{d1structure} that $R_{aa^T}^S(\Bar L|_{aa^T},B_0) = -a \alpha^\ast B_0 - B_0 \Bar \alpha a^T$. By multiplying from the right with $\Bar a$, we find that $\Bar L|_{aa^T} \in \ker R_{aa^T}^S(\cdot,B_0)$ if and only if $B_0 \Bar \alpha = 0$. Since $\Bar a \in \ker B_0$, the codimension of the kernel of $B_0$ in $\left< \Bar a \right>^\bot$ equals the codimension of the full kernel of $B_0$, hence $\dim_\C \ker R_{aa^T}^S(\cdot,B_0) = m-1-\textrm{codim}_\C \ker B_0 \leq m-2$, implying $\nu_{aa^T} = m-2$.
\end{proof}

Here a counterexample for regularity of CR-transversal maps from source manifolds with less than $m-1$ positive Levi eigenvalues may be constructed in the exact same fashion as in the case of $\MOne$. Let us instead consider a slightly different example map into $\MThree$. 

\begin{exa}
Let $\phi$ be a nowhere smooth CR function of regularity $C^k$ on $\mathbb{S}^{2m-3}$ strictly bounded by $1$. Then the map
$h: \mathbb{S}^{2m-3} \rightarrow \MThree$ given by 
\begin{align*}
    h(z) = \frac{1}{2}(z_1,\dots,z_{m-1},1)^T(z_1,\dots,z_{m-1},1) + \frac{\phi(z)}{2}(z_1,\dots,z_{m-1},-1)^T(z_1,\dots,z_{m-1},-1)
\end{align*}
is a nowhere smooth CR-transversal CR embedding of regularity $C^k$.
\end{exa}

\begin{proof}
We first consider the map $H:\C^{m-1}_z\times\C_w$ given by
\begin{align*}
    H(z,w) = \frac{1}{2}(z_1,\dots,z_{m-1},1)^T(z_1,\dots,z_{m-1},1) + \frac{w}{2}(z_1,\dots,z_{m-1},-1)^T(z_1,\dots,z_{m-1},-1).
\end{align*}
It is a holomorphic immersion on $\C^{m-1} \times \mathbb{B}^1 \subset \C^{m-1} \times \C$, since
\begin{align*}
    \frac{\partial}{\partial z_j}H(z,w) &= \frac{1+w}{2}\left(e_j\big(z_1,\dots,z_{m-1},\frac{1-w}{1+w}\big) + \big(z_1,\dots,z_{m-1},\frac{1-w}{1+w}\big)e_j^T\right), \\
    \frac{\partial}{\partial w}H(z,w) &= \frac{1}{2}(z_1,\dots,z_{m-1},-1)^T(z_1,\dots,z_{m-1},-1),
\end{align*}
where $e_j$ denotes the $j^{th}$ standard unit vector. The matrices $\frac{\partial}{\partial z_j}H(z,w)$ for $1\leq j \leq m-1$ are linearly independent, since their last columns - given by $\frac{1-w}{2}e_j$ - are. Observing that $\frac{\partial}{\partial z_j}H(z,w)_{m,m} = 0$, but $\frac{\partial}{\partial w}H(z,w)_{m,m} = \frac{1}{2} \neq 0$, we conclude that all partial derivatives of $H$ are linearly independent, hence $H$ is immersive.
From the adapted singular value decomposition used in the proof of \Cref{d3structure}, we see that $H$ maps $\mathbb{S}^{2m-3} \times \mathbb{B}^1$ injectively into $\MThree$. Considering the graph map $\Phi: \mathbb{S}^{2m-3} \rightarrow \mathbb{S}^{2m-3} \times \C$, $\Phi(z) = (z,\phi(z))$, which clearly is a $C^k$, but nowhere $\cinfty$-smooth CR embedding of $\mathbb{S}^{2m-3}$, we may write $h = H \circ \Phi$, showing that $h$ is a $C^k$, but nowhere $\cinfty$-smooth CR immersion of $\mathbb{S}^{2m-3}$ into $\MThree$. Note that it is CR-transversal, since $H$ is transversal to $\MThree$, and $\Phi$ was CR-transversal. Since $h$ is an injective immersion of the compact sphere, it is an embedding.
\end{proof}

\subsubsection{Classical domains of the fourth kind}

Somewhat different from the first three series of classical symmetric domains, the models for these domains, denoted by $\DFour$ for $m\geq 2$, are defined by simple quartic inequalities, first given in \cite{cartan}.
\begin{align*}
    \DFour = \left\{ z \in \C^m: z^\ast z < 1, 1 + |z^T z|^2 - 2z^\ast z > 0 \right\}.
\end{align*}
The binding inequality is the second one. Indeed, a point $z \in \partial \DFour$ satisfying $z^\ast z = 1$ also satisfies $|z^Tz|\leq 1$ by Cauchy's inequality, thus $1 + |z^Tz|^2 - 2 z^\ast z \leq 0$. A low-dimensional toy image to have in mind is that of a lens-shaped region defined by $y^2-\frac{1}{4}\left( 1 - x^2 \right)^2<0$, where we discard the unbounded region by requiring $x^2+y^2 < 1$. The smooth part of the boundary of $\DFour$, which we will denote by $\MFour$, is given by those $z\in \C^m$ satisfying $1 + |z^T z|^2 - 2z^\ast z = 0$ and $z^\ast z < 1$.

In fact, $\DFour$ is biholomorphic to the tube domain over the light cone from \Cref{exa:lightcone}. The tube domain over the future light cone is given by $\{(z_1,\dots,z_{m-1},z_{m}) \in \C^m: \Re(z_1)^2 + \dots + \Re (z_{m-1})^2 < \Re(z_{m})^2, \ \Re(z_{m}) > 0\}$. An explicit biholomorphism between the tube domain over the future light cone and $\DFour$ is given in \cite{regularitymx} as
\begin{align*}
    (z_1,\dots,z_{m-1},z_{m}) \mapsto \sqrt{2}i\left( 2\frac{z_1}{\mathcal{F}(z+\mathbf{i})} , \dots , 2\frac{z_{m-1}}{\mathcal{F}(z+\mathbf{i})} , \frac{1+\mathcal{F}(z)}{\mathcal{F}(z+\mathbf{i})}\right),
\end{align*}
where $\mathbf{i}$ denotes the vector $(0,\dots,0,i) \in \C^m$ and where $\mathcal{F}(z) := z_m^2 - z_1^2 - \dots - z_{m-1}^2$ for any $z \in \C^m$. 

Let us nevertheless reprove the regularity result for $\DFour$ by computing the necessary quantities directly from Cartan's representation. As an example point in $\MFour$ to base our calculations on, take $a := (\frac{1}{2},\frac{i}{2},0,\dots,0)^T$. Here, $a^Ta = 0$ and $a^\ast a = \frac{1}{2}$. Contrary to the first three kinds of classical domains, $\MFour$ will necessarily behave exactly like the tube over the light cone.

\begin{prop}
\label{prop:d4}
Let $m \geq 2$ and $M$ be a minimal CR manifold.
Then every CR map $h$ of regularity $C^{m-1}$ from $M$ into $\MFour$ which is of (real) rank $\geq 3$ is $\cinfty$-smooth on a dense open subset of $M$. If $h$ is CR transversal and for some $n_+ \geq 1$, $M$ has at least $n_+$ positive Levi eigenvalues almost everywhere, then initial regularity may be dropped to $\diffable{m-n_+}$.
\end{prop}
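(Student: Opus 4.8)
The plan is to deduce both assertions from \Cref{cor:always} and \Cref{cor:transversal}, once it is shown that $\MFour$ is a uniformly pseudoconvex hypersurface whose Levi leaves are $1$-dimensional complex manifolds and that $\nu_{p'} = 0$ for every $p' \in \MFour$. Granting this, the first assertion is exactly \Cref{cor:always}: a $C^{m-1}$-regular CR map $h$ from a connected minimal $M$ into $\MFour$ is either generically smooth or maps all of $M$ into a single leaf $\eta_{h(p)}$, and since $\dim_\C \eta_{h(p)} = 1$ the latter alternative would force the real rank of $h$ to be at most $2$, contrary to hypothesis (if $M$ is not connected, one applies this on each component). The second assertion is \Cref{cor:transversal} with $N' = m$: the condition $\nu_{h(p)} = 0 < n_+$ holds at every point, and since a CR-transversal CR map into a hypersurface with $m-2$ positive Levi eigenvalues forces $n_+ \leq m-2$, we get $m-n_+ \geq 2$, whence $C^{m-n_+} \subseteq C^2$ and the full regularity hypothesis of \Cref{cor:transversal} is met.

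It thus remains to establish the structural claims for $\MFour$. As in the treatment of $\DTwo$, the boundary orbit theorem for the bounded symmetric domain $\DFour$ provides a transitive action of its automorphism group on $\MFour$ by ambient biholomorphisms; since $\nu$ is a biholomorphic invariant and the Levi foliation transforms naturally under biholomorphisms (see the proof of \Cref{def:nu}), it suffices to verify everything at the single point $a := (\tfrac12, \tfrac i2, 0, \dots, 0)^T \in \MFour$, at which $a^T a = 0$ and $a^\ast a = \tfrac12$. Writing $\rho(z) = 1 + |z^T z|^2 - 2 z^\ast z$, I would first determine the leaf $\eta_a$ by requiring $\rho(a + tv) \equiv 0$ in $t$: comparing coefficients shows that a direction $v$ is admissible precisely when $a^\ast v = 0$, $v^T v = 0$ and $2 |a^T v|^2 = \lVert v \rVert^2$, which forces $v$ to be a scalar multiple of $v_0 := (i, 1, 0, \dots, 0)^T$. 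Hence $\eta_a = (a + \C v_0) \cap \{z : z^\ast z < 1\}$, and the Levi leaves are $1$-dimensional complex manifolds.

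Next I would fix a complex hyperplane $\Sigma$ through $a$ transversal to $\C v_0$, for instance $\Sigma = \{z \in \C^m : z_2 = i z_1\}$, check that $S := \MFour \cap \Sigma$ is strongly pseudoconvex at $a$ by inspecting the real Hessian of the restriction $\rho|_\Sigma$, and thereby place myself in the setting of \Cref{lem:setup}; this simultaneously confirms that $\MFour$ is uniformly pseudoconvex near $a$ with Levi foliation $\eta$, and reduces the computation of $\nu_a$ to evaluating the single tensor value $R^S_a(\cdot, v_0)$. To compute the latter, I would exhibit an explicit ambient section $V \in \Gamma(T\eta|_S)$ with $V(a) = v_0$ --- either by a closed-form expression in $z$ in the spirit of the projection $Z \mapsto (\mathbb{I} - ZZ^\ast) B_0 (\mathbb{I} - Z^\ast Z)$ used in \Cref{d1structure}, or simply by extending $v_0$ smoothly along the leaves --- then differentiate $V$ along a holomorphic curve tangent to a given $\Bar L \in T_a^{0,1} S$, project onto $T_a^\bot \eta$, and read off the condition for $\Bar L \in \ker R^S_a(\cdot, v_0)$; I expect it to force $\Bar L = 0$, so that $\ker R^S_a(\cdot, v_0) = \{0\}$ and $\nu_a = 0$. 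By transitivity, $\nu \equiv 0$ on all of $\MFour$, which completes the reduction.

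The main obstacle is this last computation: producing a sufficiently explicit and tractable section of $T\eta$ along $S$ in the quartic model and carrying through the differentiate-and-project step, together with the strong pseudoconvexity check for $S$ at $a$. A shortcut that circumvents all of it would be to transport \Cref{exa:lightcone} through the explicit biholomorphism between $\DFour$ and the tube over the future light cone recorded above, using the biholomorphic invariance of $\nu$; as announced, however, the argument here is intended to proceed directly from Cartan's representation.
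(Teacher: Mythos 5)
Your reduction of both statements to \Cref{cor:always} and \Cref{cor:transversal} is exactly the paper's route (including the observation that landing in a single one-dimensional leaf caps the real rank at $2$, and that $n_+\le m-2$ guarantees the $C^2$ hypothesis), and your leaf computation at $a$ is correct: the admissible directions are precisely $\C\bar a = \C v_0$. Where you diverge is in the choice of transversal slice, and this is exactly what leaves your two flagged obstacles open: the paper takes $\Sigma$ to be the quadric $\{z^Tz=0\}$ rather than an affine hyperplane, so that $S=\MFour\cap\Sigma=\{w: w^Tw=0,\ w^\ast w=\tfrac12\}$ is the intersection of a complex manifold with a sphere — strong pseudoconvexity comes for free, with no Hessian inspection — and, since the leaf through any $w\in S$ is $w+\C\bar w$, the map $w\mapsto\bar w$ is already an explicit section of $T\eta$ along $S$. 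The differentiate-and-project step then collapses: for $\bar L|_a\in T^{0,1}_aS$ with real part $\tfrac12 v$ (so $a^Tv=a^\ast v=0$) and the curve $a+tv$, one gets $\bar L|_a\bar w=\bar v$, which lies in $\left<\bar a\right>^\bot=T_a^\bot\eta$ already because $a^\ast v=0$, so $R^S_a(\bar L|_a,\bar a)=\bar v$ vanishes only for $v=0$ and $\nu_a=0$. With your hyperplane $\{z_2=iz_1\}$ the argument still goes through, but you would have to produce a genuine section of $T\eta$ along your slice (an arbitrary smooth extension of $v_0$ will not compute $R^S$); the closed form $V(w)=\bar w-\overline{(w^Tw)}\,w$ works on all of $\MFour$ and restricts to $\bar w$ on the quadric slice. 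Either finish with that section, or simply switch to the quadric slice — or, as you note, transport \Cref{exa:lightcone} through the explicit biholomorphism, which the paper also offers as an alternative.
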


This is an immediate consequence of the boundary orbit theorem for $\MFour$, which allows us to take any point in $\MFour$ to $(\frac{1}{2},\frac{i}{2},0,\dots,0)$ by an ambient biholomorphism, and of \Cref{cor:always} (\Cref{cor:transversal} for CR transversal $h$ and $n_+ \geq 1$). The relevant structural properties of $\MFour$ of course do not differ at all from those of the tube over the light cone (\Cref{exa:lightcone}).

\begin{lem}
\label{d4structure}
Let $a \in \C^m$ be such that $a^Ta = 0$ and $a^\ast a = \frac{1}{2}$. Around $a \in \MFour$, the pseudoconvex hypersurface $\MFour$ is foliated by complex lines. Its Levi form has exactly $m-2$ positive eigenvalues, and $\nu_{a} = 0$.
\end{lem}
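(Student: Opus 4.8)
The approach mirrors the proofs of \Cref{d1structure}, \Cref{d2structure} and \Cref{d3structure}: I would produce a complex slice putting us in the situation of \Cref{lem:setup} and then compute the tensor $R^S$ at $a$. Since the leaves will be one-dimensional ($K=1$), $T\eta$ is a complex line and it is enough to evaluate $R^S$ on a single section. The two ``soft'' hypotheses of \Cref{lem:setup} — pseudoconvexity of $\MFour$ and the existence of a foliation by one-dimensional complex manifolds near $a$ — I would take from the explicit biholomorphism of $\DFour$ with the tube over the future light cone recalled above: that tube is convex, hence pseudoconvex, and its foliation by complex lines (\Cref{exa:lightcone}) transports through the biholomorphism. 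In addition, a short direct computation using $a^Ta=0$ and $a^\ast a=\tfrac12$ shows that along $z=a+t\bar a$ one has $z^Tz=t$ and $z^\ast z=\tfrac{1+|t|^2}{2}$, so $1+|z^Tz|^2-2z^\ast z\equiv 0$; thus the complex line $\{a+t\bar a:|t|<1\}$ lies in $\MFour$, and being a complex curve in a pseudoconvex hypersurface it is an integral curve of the Levi-null distribution, hence locally the leaf through $a$, so $T_a\eta=\C\bar a$.

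Next I would set up the slice $\Sigma:=\{z\in\C^m:z^Tz=0\}$. It is an $(m-1)$-dimensional complex manifold near $a$ because $d(z^Tz)=2z^T\,dz\neq 0$ at $a\neq 0$, and it is transversal to the leaf direction since $a^T\bar a=a^\ast a=\tfrac12\neq 0$, so $T_a\eta\oplus T_a\Sigma=T_a\C^m$. Along $\Sigma$ the defining equation of $\MFour$ collapses to $z^\ast z=\tfrac12$, hence $S:=\MFour\cap\Sigma=\Sigma\cap\{\|z\|^2=\tfrac12\}$ is the intersection of a complex manifold with a round sphere, and is therefore strongly pseudoconvex. \Cref{lem:setup} then yields at once that the Levi form of $\MFour$ has exactly $K=1$ zero eigenvalue near $a$ — that is, $m-2$ positive eigenvalues — and that $\nu_a=\max_{0\neq V\in T_a\eta}\dim_\C\ker R^S_a(\cdot,V)$.

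Finally I would compute $R^S_a$ on the section $V(q):=\bar q$ of $T\eta|_S$; this is a legitimate section of $T\eta$ along $S$ because for $q\in S$ we again have $q^Tq=0$ and $q^\ast q=\tfrac12$, so $q+t\bar q\in\MFour$ and $\bar q\in T_q\eta_q$, and by tensoriality of $R^S$ any smooth extension may be used. For a CR vector $\bar L|_a=\sum_j\mu_j\partial_{\bar z_j}\in T^{0,1}_aS$ the componentwise derivative is $\bar L|_a V=\mu$, so $R^S_a(\bar L|_a,\bar a)=\mathbb{P}_{(\C\bar a)^\bot}(\mu)$. Now $T^{0,1}_aS$ is cut out by $a^\ast\mu=0$ (tangency to $\Sigma$) together with $a^T\mu=0$ (tangency to the sphere), and the orthogonal projection onto $(\C\bar a)^\bot$ fixes $\mu$ since its correction term is proportional to $(a^T\mu)\bar a=0$. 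Hence $R^S_a(\bar L|_a,\bar a)=\mu$, so $\ker R^S_a(\cdot,\bar a)=\{0\}$, and since $T_a\eta$ is one-dimensional this gives $\nu_a=0$.

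The genuine obstacle, I expect, is only the pseudoconvexity/foliation step: the natural quartic defining function of $\DFour$ is \emph{not} plurisubharmonic near $\MFour$ (there $z^\ast z\geq\tfrac12$), so one should import pseudoconvexity from the convex light-cone tube rather than argue with a defining function directly; once $\MFour$ is known to be pseudoconvex and foliated near $a$, everything else is the same linear algebra as in the three preceding lemmas.
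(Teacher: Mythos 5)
Your proposal is correct and follows essentially the same route as the paper: the same slice $\Sigma=\{z^Tz=0\}$ with $S=\Sigma\cap M_{IV}^m$ strongly pseudoconvex as the intersection of a complex manifold with a sphere, the same foliation by the lines $w+t\bar w$, and the same computation of $R^S_a(\cdot,\bar a)$ via the componentwise antiholomorphic derivative of the section $q\mapsto\bar q$, with the projection onto $\langle\bar a\rangle^\bot$ acting trivially because $a^T\mu=0$. Your extra care about where pseudoconvexity of $M_{IV}^m$ comes from (the biholomorphism with the convex tube over the light cone) is a reasonable supplement to what the paper leaves implicit.
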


\begin{proof}
The complex quadric $\Sigma$ defined by $z^Tz = 0$ is a manifold where $z \neq 0$. Its intersection $S$ with $\MFour$ is given by $S = \{w \in \C^m, w^Tw = 0, w^\ast w = \frac{1}{2}\}$. As it is the intersection of a complex manifold with the strongly pseudoconvex sphere given by $w^\ast w = \frac{1}{2}$, it is strongly pseudoconvex itself.

Near a point $w \in S$, the complex line given by $\eta_w(t) = w + t\Bar w$ is contained in $S$. This is proven by straightforward calculation. Since $w^\ast w = \frac{1}{2}$ and $w^T w = 0$, we observe $(w^\ast + \Bar t w^T)(\Bar w + \Bar t w) = \Bar t$ and similar cancellations, and arrive at
\begin{align*}
    & 1 + |\eta_w^T(t) \eta_w(t)|^2 - 2 \eta_w(t)^\ast \eta_w(t) \\ 
    &= 1 + (w + t \Bar w)^\ast \overline{(w + t \Bar w)} (w + t \Bar w)^T (w + t \Bar w) - 2(w + t \Bar w)^\ast(w + t \Bar w) \\
    &= 1 + (w^\ast + \Bar t w^T)(\Bar w + \Bar t w) (w^T + t \Bar w^T) (w + t \Bar w) - 2(w^\ast + \Bar t w^T)(w + t \Bar w) \\
    &= 1 + \Bar t t - (1 + \Bar t t) = 0.
\end{align*}

It remains to calculate the tensor $R^S(\cdot,\Bar w)$ at $a \in S$, since the section $\Bar w$ already spans $T\eta$ along $S$. A vector $v \in T_a^cS$ is characterized by $(a+tv)^T(a+tv) = \mathcal{O}(|t|^2)$ and $(a+tv)^\ast(a+tv) = \frac{1}{2} + \mathcal{O}(|t|^2)$, which is equivalent to $a^Tv = a^\ast v = 0$. Take a CR vector $\Bar L|_a \in T_a^{0,1}S$ with real part $\frac{1}{2}v$, and consider the holomorphic curve $\gamma(t) = a+tv$. Then $\Bar L|_a \Bar w = \frac{d}{d\Bar t}|_{t=0} \overline{(a + t v)} = \Bar v$, and we find that $\Bar v \in T_a^\bot\eta_a = \left< \Bar a \right>^\bot$ already, since $a^\ast v = 0$. Therefore $R^S_a(\Bar L|_a, \Bar a) = \Bar v$ only vanishes if $v$ and thus $\Bar L|_a$ vanish, implying $\nu_a = 0$.
\end{proof}




\bibliography{references}

\begin{thebibliography}{10}

\bibitem{baouendi}
M.~S. Baouendi, P.~Ebenfelt, and L.~Preiss-Rothschild.
\newblock {\em Real submanifolds in complex space and their mappings}.
\newblock Princeton Math. Series 47. Princeton University Press, 1999.

\bibitem{MR2581969}
M.~S. Baouendi, P.~Ebenfelt, and D.~Zaitsev.
\newblock A {C}auchy-{K}owalevsky theorem for overdetermined systems of
  nonlinear partial differential equations and geometric applications.
\newblock {\em Comm. Partial Differential Equations}, 34(10-12):1180--1207,
  2009.

\bibitem{berhanu-cordaro-hounie}
S.~Berhanu, P.~D. Cordaro, and J.~Hounie.
\newblock {\em An introduction to involutive structures}, volume~6 of {\em New
  Mathematical Monographs}.
\newblock Cambridge University Press, Cambridge, 2008.

\bibitem{bx}
S.~Berhanu and M.~Xiao.
\newblock On the regularity of {C}{R} mappings between {C}{R} manifolds of
  hypersurface type.
\newblock {\em Trans. Amer. Math. Soc.}, 369, 11 2014.

\bibitem{cartan}
E.~Cartan.
\newblock Sur les domaines bornés homogènes de l’espace de $n$ variables
  complexes.
\newblock {\em Abh. Math. Sem. Univ. Hamburg}, 11(1):116--162, 1935.

\bibitem{MR1666972}
B.~Coupet, F.~Meylan, and A.~Sukhov.
\newblock Holomorphic maps of algebraic {CR} manifolds.
\newblock {\em Internat. Math. Res. Notices}, (1):1--29, 1999.

\bibitem{dfexponent}
K.~Diederich and J.~Fornaess.
\newblock Pseudoconvex domains: Bounded strictly plurisubharmonic exhaustion
  functions.
\newblock {\em Invent. Math.}, 39:129--141, 06 1977.

\bibitem{symmetricspace}
A.~Fedenko~(originator).
\newblock Hermitian symmetric space.
\newblock \textit{Encyclopedia of Mathematics}, URL:
  \url{http://encyclopediaofmath.org/index.php?title=Hermitian_symmetric_space&oldid=33416}.
\newblock [Online; accessed 31-May-2020].

\bibitem{fornaessthesis}
J.~E. Fornaess.
\newblock Embedding strictly pseudoconvex domains in convex domains.
\newblock {\em Amer. J. Math.}, 98(2):529--569, 1976.

\bibitem{freeman1}
M.~Freeman.
\newblock Local complex foliation of real submanifolds.
\newblock {\em Math. Ann.}, 209:1--30, 1974.

\bibitem{thesis}
J.~E. Greilhuber.
\newblock Smooth regularity of {C}{R} maps into boundaries of classical
  symmetric domains.
\newblock Master's thesis, University of Vienna, 2020.
\newblock \url{http://othes.univie.ac.at/63169/}.

\bibitem{MR1703603}
X.~Huang.
\newblock On a linearity problem for proper holomorphic maps between balls in
  complex spaces of different dimensions.
\newblock {\em J. Differential Geom.}, 51(1):13--33, 1999.

\bibitem{MR3343893}
S.-Y. Kim and D.~Zaitsev.
\newblock Rigidity of proper holomorphic maps between bounded symmetric
  domains.
\newblock {\em Math. Ann.}, 362(1-2):639--677, 2015.

\bibitem{Kossovskiy:2016wx}
I.~Kossovskiy, B.~Lamel, and M.~Xiao.
\newblock Regularity of cr-mappings of codimension one into levi-degenerate
  hypersurfaces.
\newblock {\em Comm. Anal. Geom.}, 29(1):151--181, 2021.

\bibitem{Lamel:2001kq}
B.~Lamel.
\newblock {A reflection principle for real-analytic submanifolds of complex
  spaces}.
\newblock {\em J. Geom. Anal.}, 11(4):627--633, 2001.

\bibitem{Lamel:2001vs}
B.~Lamel.
\newblock {Holomorphic maps of real submanifolds in complex spaces of different
  dimensions}.
\newblock {\em Pacific J. Math.}, 201(2):357--387, Dec. 2001.

\bibitem{Lamel:2004hh}
B.~Lamel.
\newblock {A smooth regularity theorem for nondegenerate CR mappings}.
\newblock {\em Monatsh. Math.}, 142(4):315--326, 2004.

\bibitem{La1}
B.~Lamel.
\newblock A $\cinfty$-regularity theorem for nondegenerate {C}{R} mappings.
\newblock {\em Monatsh. Math.}, 142:315--326, 2004.

\bibitem{lamelmir2}
B.~Lamel and N.~Mir.
\newblock Convergence of formal {C}{R} mappings into strongly pseudoconvex
  {C}auchy–{R}iemann manifolds.
\newblock {\em Invent. Math.}, 210(3):963--985, 11 2017.

\bibitem{lamelmir1}
B.~Lamel and N.~Mir.
\newblock On the ${C}^\infty$ regularity of {C}{R} mappings of positive
  codimension.
\newblock {\em Adv. Math.}, 335:696--734, 9 2018.

\bibitem{MR4060572}
B.~Lamel and N.~Mir.
\newblock Regularity of {CR} mappings of abstract {CR} structures.
\newblock {\em Internat. J. Math.}, 31(1):2050009, 38, 2020.

\bibitem{Low:1985it}
E.~L{\o}w.
\newblock {Embeddings and proper holomorphic maps of strictly pseudoconvex
  domains into polydiscs and balls}.
\newblock {\em Math. Z.}, 190(3):401--410, 1985.

\bibitem{Mir:2017dg}
N.~Mir.
\newblock {Holomorphic deformations of real-analytic CR maps and analytic
  regularity of CR mappings}.
\newblock {\em J. Geom. Anal.}, 27(3):1920--1939, 2017.

\bibitem{MN}
N.~Mok and S.~C. Ng.
\newblock Germs of measure-preserving holomorphic maps from bounded symmetric
  domains to their cartesian products.
\newblock {\em J. Reine Angew. Math.}, 2012(669):47 -- 73, 2012.

\bibitem{sommer}
F.~Sommer.
\newblock Komplex-analytische {B}lätterung reeller {H}yperflächen im
  $\mathbb{C}^n$.
\newblock {\em Math. Ann.}, 137:392--411, 1959.

\bibitem{Stensones:1996ek}
B.~Stensones.
\newblock {Proper maps which are Lipschitz $\alpha$ up to the boundary}.
\newblock {\em J. Geom. Anal.}, 6(2):317--339, 1996.

\bibitem{symmetricdomain}
E.~Vinberg~(originator).
\newblock Symmetric domain.
\newblock \textit{Encyclopedia of Mathematics}, URL:
  \url{http://encyclopediaofmath.org/index.php?title=Symmetric_domain&oldid=16315}.
\newblock [Online; accessed 27-May-2020].

\bibitem{wolf}
J.~A. Wolf.
\newblock {\em Fine structure of hermitian symmetric spaces}.
\newblock Symmetric Spaces: Short Courses Presented at Washington University.
  Ed. Boothby \& Weiss, Marcel Dekker Inc., 1972.

\bibitem{regularitymx}
M.~Xiao.
\newblock Regularity of mappings into classical domains.
\newblock {\em Math. Ann.}, 2019.

\end{thebibliography}
\bibliographystyle{abbrv}

\end{document}